\DeclareMathOperator{\Aut}{Aut}
\DeclareMathOperator{\Orb}{Orb}
\DeclareMathOperator{\dcl}{dcl}
\DeclareMathOperator{\Models}{Str}
\theoremstyle{plain}%
\newtheorem{theorem}{Theorem}[section]%
\newtheorem{proposition}[theorem]{Proposition}%
\newtheorem{lemma}[theorem]{Lemma}%
\newtheorem{corollary}[theorem]{Corollary}%
\newtheorem{definition}[theorem]{Definition}%
\newcommand{\HF}{\mathrm{HF}}
\newcommand{\Fr}{Fra{\"i}ss{\'e}}
\newcommand{\LMS}{{L_\mathrm{MS}}}
\newcommand{\TMS}{{T_\mathrm{MS}}}
\newcommand{\Ltr}{{L_\mathrm{tr}}}
\def\st{\,:\,}
\def\M{{\EM{\mathcal{M}}}}
\def\SS{{\EM{\mathcal{S}}}}
\def\W{{\EM{\mathcal{P}}}}
\def\cU{{\EM{\mathcal{U}}}}
\def\cM{{\EM{\mathcal{M}}}}
\def\N{{\EM{\mathcal{N}}}}
\def\cN{{\EM{\mathcal{N}}}}
\def\X{{\EM{\mathcal{X}}}}
\def\fT{{\EM{\Sigma}}}
\def\AA{{\EM{\mathcal{A}}}}
\def\BB{{\EM{\mathcal{B}}}}
\def\Pr{{\EM{\mathbb{P}}}}
\def\cK{{\EM{\mathcal{K}}}}
\def\CC{{\EM{\mathcal{C}}}}
\newcommand{\llrr}[1]{{\llbracket #1 \rrbracket}}
\newcommand{\llrrC}[1]{{\llrr{#1}_{\cC_0}}}
\newcommand{\llrrAG}[1]{{\llrr{#1}_{\AG}}}
\newcommand{\llrrsub}[2]{{\llbracket #1 \rrbracket}_{#2}}
\newcommand{\bigllrr}[1]{{\bigl \llbracket #1 \bigr \rrbracket}}
\newcommand{\bigllrrAG}[1]{{\bigl \llbracket #1 \bigr \rrbracket}_{\AG}}
\def\cC{{\EM{\mathscr{C}}}}
\def\O{{\EM{\mathcal{O}}}}
\def\UU{{\EM{\mathbb{U}}}}
\def\x{{\EM{\ol{x}}}}
\def\xx{{\EM{\ol{x}}}}
\newcommand\dd{{\EM{\mathbf{d}}}} 
\def\y{{\EM{\ol{y}}}}
\def\z{{\EM{\ol{z}}}}
\def\s{{\EM{\ol{s}}}}
\def\zz{{\EM{\ol{z}}}}
\def\ttt{{\EM{\ol{t}}}}
\def\a{{\EM{\ol{a}}}}
\def\aa{{\EM{\ol{a}}}}
\def\vv{{\EM{\ol{v}}}}
\def\ww{{\EM{\ol{w}}}}
\def\uu{{\EM{\ol{u}}}}
\def\b{{\EM{\ol{b}}}}
\def\c{{\EM{\ol{c}}}}
\def\m{{\EM{\ol{m}}}}
\newcommand{\wlw}{\ensuremath{\Nats^{<\w}}}
\newcommand{\wtow}{\ensuremath{\Nats^{\w}}}
\def\ov#1{{\EM{\overline{#1}}}}
\newcommand{\qn}{q^\natural}
\def\Lomega#1{{\EM{\mc{L}_{#1, \w}}}}
\def\Lww{\Lomega{\w}}
\def\Lwow{\Lomega{\w_1}}
\newcommand{\defn}[1]{{\bf{#1}}}
\newcommand{\defas}{{\EM{\ :=\ }}}
\newcommand{\MLZ}{\Models_{L_0, L}^{\M_0}}
\newcommand{\MLG}{\Models_{L_G, L}^{\AG}}
\newcommand{\AG}{{\AA_G}}
\newcommand{\ER}{Erd\H{o}s-R\'enyi}
\def\w{\EM{\omega}}
\def\Reals{{\EM{{\mbb{R}}}}}
\def\Rationals{{\EM{{\mbb{Q}}}}}
\def\RatNonneg{{\EM{{\mbb{Q}_{\ge 0}}}}}
\def\Rplus{{\EM{{\mbb{R_+}}}}}
\def\Naturals{{\EM{{\mbb{N}}}}}
\def\Nats{{\EM{{\mbb{N}}}}}
\def\^{\EM{{}^{\And}}}
\def\And{\EM{\wedge}}
\def\<{\EM{\langle}}
\def\>{\EM{\rangle}}
\def\EM#1{\ensuremath{#1}}
\def\mbb#1{\EM{\mathbb{#1}}}
\def\mc#1{\EM{\mathcal{#1}}}
\def\ol#1{\EM{\overline{#1}}}
\def\ul#1{\underline{#1}}
\newcommand{\logicact}{\circledast_L}
\newcommand{\logicactcz}{\circledast_{\cC_0, L}}
\newcommand{\sympar}[1]{\ensuremath{S_{#1}}}
\newcommand{\sym}{\sympar{\infty}}
\newcommand{\vinf}{\ensuremath{V_\infty}}
\newcommand{\vfin}{\ensuremath{V_{\mathrm{fin}}}}
\newcommand{\Ginf}{\ensuremath{G_\infty}}
\newcommand{\Gfin}{\ensuremath{G_{\mathrm{fin}}}}
\newcommand{\mufin}{\ensuremath{\mu_{\mathrm{fin}}}}
\newcommand{\GG}{\ensuremath{\mathbb{G}}}
\newcommand{\Full}{\ensuremath{\mathrm{Full}}}
\newcommand{\tind}{\ensuremath{t_{\mathrm{full}}}}
\definecolor{darkred}{rgb}{0.5,0,0}
\definecolor{darkgreen}{rgb}{0, 0.3,0}
\definecolor{darkblue}{rgb}{0,0,0.6}
\begin{document}


\title[Invariant measures via inverse limits of finite structures]
{Invariant measures via inverse limits \\ of finite structures}

\author[Ackerman]{Nathanael Ackerman}
\address{
Department of Mathematics\\
Harvard University\\
One Oxford Street\\
Cambridge, MA 02138\\
USA
}
\email{nate@math.harvard.edu}

\author[Freer]{Cameron Freer}
\address{
Department of Brain and Cognitive Sciences\\
Massachusetts Institute of Technology\\
77 Massachusetts Ave.\\
Cambridge, MA 02139\\
USA
}
\email{freer@mit.edu}

\author[Ne{\v{s}}et{\v{r}}il]{Jaroslav Ne{\v{s}}et{\v{r}}il}
\address{Department of Applied Mathematics and Institute of Theoretical
Computer Science (IUUK and ITI)\\ Charles University\\ Malostransk\'e
n\'am.25\\ 11800
Praha 1\\ Czech Republic}
\email{nesetril@kam.ms.mff.cuni.cz}

\author[Patel]{Rehana Patel}
\address{Franklin W.\ Olin College of Engineering\\ Olin Way\\ Needham,
MA\\ 02492\\ USA}
\email{rehana.patel@olin.edu}

\begin{abstract}
Building on recent results 
regarding
symmetric probabilistic constructions of
countable structures, we
provide a method for constructing probability measures,
concentrated on certain classes of countably infinite structures,
that are invariant under 
all permutations
of the
underlying set that fix all 
constants.
These measures are constructed 
from
inverse limits of measures on
certain finite structures.
We use this construction to
obtain 
invariant probability measures concentrated on the classes
of
countable models of
certain
first-order theories, including 
measures
that do not assign positive
measure to the isomorphism class of any single model.
We also characterize those transitive
Borel $G$-spaces admitting a $G$-invariant probability measure, when $G$ is
an arbitrary countable product of symmetric groups on a countable
set.
\end{abstract}


\maketitle
\thispagestyle{empty}

\vspace*{-20pt}

\begin{small}
\renewcommand\contentsname{\!\!\!\!}
\setcounter{tocdepth}{1}
\tableofcontents
\end{small}

\vspace*{-20pt}




\section{Introduction}

Symmetric probabilistic constructions of mathematical
structures have a long history,
dating back to the countable random graph model of
\ER~\cite{MR0120167}, a construction that
with  probability $1$
yields
(up to isomorphism) the Rado graph, i.e., the countable universal
ultrahomogeneous graph.
In this paper, we build on recent developments that have extended the range
of such constructions.  In particular, we consider when a symmetric probabilistic
construction can 
produce
many different countable structures, 
with no isomorphism
class occurring with positive probability. We also consider probabilistic
constructions with respect to various 
notions of partial symmetry.

One 
natural notion
of
a symmetric probabilistic construction
is via an \emph{invariant measure} --- namely, a probability measure on a class of
countably infinite structures that is invariant under all permutations of
the underlying set of elements. When such an invariant measure assigns
probability $1$ to a given class of structures (as the \ER\ construction does 
to
the isomorphism class
of the Rado graph), we say that it is \emph{concentrated} on such
structures, and that the given class \emph{admits} an invariant measure.

For several decades, most known examples of such invariant measures were variants of
the \ER\ random graph,
for instance, an analogous construction 
that produces
the 
countable universal bipartite
graph.
In recent years, a number of other important classes of
structures
have been shown to
admit invariant measures, most notably 
the collection of countable metric spaces whose completion is Urysohn space, by Vershik
\cite{MR2006015}, \cite{MR2086637}, and Henson's universal ultrahomogeneous
$K_n$-free graphs by Petrov
and Vershik
\cite{MR2724668}. Both constructions
are considerably more complicated than
the \ER\ construction.
By extending the methods of \cite{MR2724668}, Ackerman, Freer, and Patel
\cite{AFP}
have completely 
characterized
those countable structures in a countable
language whose isomorphism class admits 
an
invariant 
measure.

In the present paper
we extend 
the 
construction of \cite{AFP}.
Our new construction is more 
streamlined than 
the one
in \cite{AFP}, and also 
broader
in its consequences. 
Both constructions involve building
continuum-sized structures from which 
invariant measures are obtained by sampling, but 
the one in \cite{AFP} produces
an explicit structure with underlying set the real numbers, necessitating
various book-keeping devices, which we avoid here.

As a first
application of 
the present
more general construction, we 
describe certain
first-order
theories 
having
the property that
there is an invariant probability measure that is concentrated on the class of models
of the theory but that assigns measure $0$ to the
isomorphism class of 
each
particular
model.
We thereby obtain new examples of 
classes of
structures 
admitting invariant
measures,
and new examples of invariant measures concentrated on 
collections of structures 
that were 
previously
known to admit invariant measures.

Towards
our second application, we consider measures that are invariant
under 
the action of certain
subgroups
of the full permutation group $\sym$ on the underlying set.
Note
that any
 random construction of a countably infinite 
structure 
with
constants
faces a fundamental obstacle
to having an $\sym$-invariant distribution, as described in \cite{AFP}.
Namely,
if the distribution were $\sym$-invariant,
then
the probability 
that
any given constant symbol in the language 
is
interpreted as a
particular element 
would have to
be 
the same as for any other
element, leading to a contradiction, as a countably infinite set of
identical reals cannot sum to $1$.
In other words, if 
a structure admits 
an $\sym$-invariant
measure,
then it cannot be in a language having constant symbols.
Furthermore, 
if a measure
concentrated on the isomorphism class of the structure
is invariant 
under 
a given
permutation, then that permutation
must
fix all elements that interpret constant symbols.

With that obstacle in mind,
we may ask, more generally, which structures
admit
measures that are invariant under all permutations of the underlying
set of the structure and that fix the restriction of the structure to  a particular
sublanguage.
We answer this question 
in the case of a unary sublanguage, 
i.e., where the sublanguage consists entirely of unary relations.
By results in descriptive set theory, 
this
is
equivalent to describing all those transitive Borel $G$-spaces admitting a
$G$-invariant probability measure when $G$ is a
countable product of symmetric groups on a countable (finite or
infinite) set.
This constitutes the second application of our construction.

In the special case 
of
undirected graphs, our
methods 
for
producing invariant measures can be viewed as constructing
dense graph limits, in the sense of
Lov\'asz and Szegedy \cite{MR2274085} and others; for details, see \cite{MR3012035}.
In fact, by results of
Aldous \cite{MR637937}, Hoover \cite{Hoover79}, Kallenberg \cite{MR1182678},
and Vershik \cite{MR1922015}  in work on the probability theory of
exchangeable arrays, 
an
invariant measure on graphs is
necessarily 
the distribution
of a particular sampling procedure from \emph{some} continuum-sized limit
structure.
For more details on this connection, see Diaconis and Janson
\cite{MR2463439} and
Austin \cite{MR2426176}.

Our work also has
connections to
a recent study of Borel models of
size continuum
by Baldwin, Laskowski, and Shelah \cite{BaldwinLaskowski}, building on
work of Shelah \cite[Theorem~VII.3.7]{MR1083551}. Their continuum-sized
structures, like ours, are
constructed from
inverse limits;
however, our methods differ from theirs
in 
several respects
and, unlike 
\cite{BaldwinLaskowski}, 
our focus is on the
consequences of these constructions for invariant measures.

\subsection{Outline of the paper}

In 
Section~\ref{preliminaries}, we provide preliminaries 
for our constructions,
including definitions and basic results from the model theory of infinitary
logic and from descriptive set theory.

We then pause, in Section~\ref{simplified}, to provide a 
toy
construction, for graphs, that will
motivate 
the
more technical aspects
of our main construction.

In Section~\ref{invlimitconstruction},
we present our main technical construction,
in which we build
a 
special kind of
continuum-sized structure from
inverse
limits.

In the 
following
sections, we
provide two applications of 
this
main 
construction. First, in
Section~\ref{almostSec}, we use it to provide new constructions
of invariant probability measures concentrated
on the class of models of certain first-order theories, but assigning positive measure
to no single isomorphism class.

Second, in Section~\ref{GorbitSec},
we use the 
main construction
to characterize those structures that are invariant under automorphism
groups that fix the restrictions of the structures to unary sublanguages.
As noted, this
amounts to characterizing 
those transitive Borel $G$-spaces
that admit
a $G$-invariant probability measure, when $G$ is a countable
product of symmetric groups on a countable (finite or infinite) set.

\section{Preliminaries}
\label{preliminaries}

In this section, we describe
some 
notation, and
introduce 
several basic
notions
regarding
infinitary logic,
transitive $G$-spaces, and model-theoretic structures and their automorphisms
that we
will use throughout the paper.

The set $\wlw$ is defined to be the collection of finite sequences of
natural numbers.  For $x, y \in \wlw$ we write $x \preceq y$ when $x$ is an
initial segment of $y$. The set $\wtow$ is the collection of countably
infinite sequences of natural numbers. For $x \in \wtow$, we write $x|_n$
to denote the length-$n$ initial segment of $x$ in $\Nats^n$, and similarly
for elements of $\wlw$ of length at least $n$.

Suppose $j\in\Nats$. For $x_0, \ldots, x_j, y_0, \ldots, y_j \in \wlw$, we write
\[(x_0, \ldots, x_j)\sqsubseteq (y_0, \ldots, y_j)\]
when $x_i \preceq y_i$
for $0 \le i \le j$.

We write $a\^ b$ to denote the concatenation of $a, b\in\wlw$, though we
often omit the symbol $\^$ when concatenating explicit sequences.
Occasionally we will use exponential notation for repeated numerals; e.g.,
$0^42^2$ denotes $000022\in\wlw$.
Define the \defn{projection function}
\[\pi\colon \wlw \to \wlw\]
by
\[
\pi(a\^ b) = a
\]
when $a\in\wlw$ and $b \in \Naturals$, and
\[
\pi(\<\,\>) = \<\,\>,
\]
where $\<\,\>$ denotes the empty string.
Write the composition of projection with itself as $\pi^2 \defas \pi \circ \pi$. We will use this notation in
\S\ref{construction-subsec}.

Define  $\Rplus \defas \{ x \in \Reals \st x >0\}$
and
$\RatNonneg \defas \{ x \in \Rationals \st x \ge 0\}$.

A probability measure on $\Reals$ is said to be \defn{non-degenerate} when every
non-empty open set has positive measure and \defn{atomless} when every singleton has measure $0$.

We say that a probability measure $\mu$ on an arbitrary measure 
space $S$ is \defn{concentrated} on a measurable set $X\subseteq S$ when
$\mu(X) = 1$.
Given a measurable action of a group $G$ on $S$, we say that $\mu$ is \defn{$G$-invariant} if
$\mu(X) = \mu(g\cdot X)$ for every $g\in G$ and measurable $X\subseteq S$.

\subsection{Model theory of infinitary logic}

We now briefly recall notation for finitary and infinitary formulas. For
more details on such formulas and on the corresponding notion of
satisfiability (denoted by $\models$), see \cite{MR0424560} 
and \cite[\S1.1]{MR1924282}.
Throughout this paper, 
$L$  will
be a countable language, i.e., a countable collection of relation,
constant, and function symbols. Fix
an implicit set of countably
infinitely many variables. Then $\Lww(L)$ is the set of all
(finitary) first-order formulas (in that set of variables) with relation, constant, and function
symbols from $L$. The set $\Lwow(L)$ of infinitary $L$-formulas is the
smallest set containing $\Lww(L)$ and closed under countable conjunctions,
existential quantification, and negation, and such that each formula has
only finitely many free variables. In particular, $\Lwow(L)$ is closed
under taking subformulas. A sentence is a formula having no free variables,
and a theory is an arbitrary collection of sentences.

Let $k \in \Nats$ and let $x_1, \ldots, x_k$ be
distinct variables. A (complete) \defn{quantifier-free
\linebreak $L$-type} $q$ with free variables $x_1, \ldots, x_k$ is a
countable collection of quantifier-free formulas of $\Lwow(L)$ whose set of free variables
is contained in $\{x_1, \ldots, x_k\}$,
and such that for any
quantifier-free
$\Lwow(L)$-formula
$\psi$
whose free variables are among $x_1, \ldots, x_k$,
either
\[\models (\forall x_1, \ldots, x_k) \bigl(\bigwedge_{\varphi \in q}
\varphi \to \psi \bigr)
\qquad \text{or}
\qquad
\models (\forall x_1, \ldots, x_k) \bigl(\bigwedge_{\varphi \in q} \varphi
\to \neg \psi \bigr).
\]
Note that any collection $q$ of formulas which has this property with
respect to all 
\emph{atomic} formulas $\psi\in \Lww(L)$ is already
a complete quantifier-free $L$-type.

Note that we will consider quantifier-free types to 
entail
a fixed ordering of their free variables.
This will be important because for a quantifier-free type $q$ with $k$-many
free variables, and a set $X$ of size $k$
with a specified ordering $<$,  we will sometimes write $q(X)$ to represent
the statement that $q(\ell_1, \ldots, \ell_{k})$ holds, where $\ell_1 < \cdots <
\ell_{k}$ are the elements of $X$.

We say that a quantifier-free type with free variables $x_1, \ldots, x_k$
is \defn{non-constant} when it implies that none of $x_1, \ldots, x_k$
instantiates a constant symbol,  and is
\defn{non-redundant} when it implies
\[
\bigwedge_{1\le i < j \le k} (x_i \neq x_j).
\]

Suppose $L_0$ is a sublanguage of $L$, i.e., each of the sets of relation,
constant, and function symbols of $L_0$ is a subset of the corresponding
set for $L$. Then the \defn{restriction} $q |_{L_0}$ of a quantifier-free
$L$-type to $L_0$ is defined to be set of atomic $L_0$-formulas  and their
negations that are implied by $\bigwedge_{\varphi \in q} \varphi$.

An
$L$-theory $T$
is \defn{quantifier-free complete}
when it is consistent and for every quantifier-free $L$-sentence $\varphi$, exactly one of $T \models
\varphi$ or $T \models \neg \varphi$ holds. 

We will later make use of the notion of a \defn{Scott sentence}: a sentence of $\Lwow(L)$
which characterizes a given countable structure up to isomorphism among
other countable $L$-structures. For more details, see
\cite[Corollary~VII.6.9]{MR0424560}. We will also 
use the notion of
an admissible set, and in particular the admissible set $\HF$ of hereditarily finite sets; again see \cite{MR0424560}.

For a structure $\M$ with underlying set $M$, a natural number
$k\in \Nats$,
and a $k$-tuple $\aa = (a_1, \ldots, a_k) \in M^k$,
we will sometimes abuse notation and write either $\aa\in M$ or $\aa \in
\M$ to mean that $a_1,
\ldots, a_k \in M$. We will also sometimes write $a_1 \cdots a_k$ to denote such a tuple.

Suppose $\M$ is an 
$L$-structure.  
When $U$ is a relation symbol in $L$, we
write
$U^\M$ to denote the set of tuples $\aa \in \M$ such that  $\M \models
U(\aa)$. Similarly, we write $c^\M$ for the instantiation in $\M$ of a
constant symbol $c\in L$ and $f^\M$ to denote the function on $\M$-tuples corresponding
to the function symbol $f \in L$.
Given a sublanguage $L_0 \subseteq L$, we write $\cM|_{L_0}$ to denote the restriction of $\M$ to $L_0$.

\subsection{Definitional expansions}
\label{defExpSec}

Fundamental to our main construction is a special sort of sentence.
We define the \defn{pithy $\Pi_2$ sentences} of $\Lwow(L)$ to be those
\linebreak $\Lwow(L)$-sentences that are of the form
\[(\forall \x)(\exists y)\varphi(\x, y),\]
where $\varphi \in \Lwow(L)$ is quantifier-free with free variables
precisely $\x, y$, and where the tuple $\x$ of variables is possibly empty.
We say that a theory $T \subseteq \Lwow(L)$ is pithy $\Pi_2$ when each
sentence in $T$ is.

In Sections~\ref{almostSec} and \ref{GorbitSec} we
will make use of the following technical result,
which produces a 
definitional expansion of the empty theory
to a pithy $\Pi_2$ theory $\fT_A$ in which every formula in a desired admissible 
set $A$
is
equivalent to a quantifier-free formula;
we call $\fT_A$ the \defn{definitional expansion for $A$}.
This result is a straightforward extension of the standard
\emph{Morleyization} method.

\begin{lemma}
\label{morleyization}
For every
admissible set $A \supseteq L$, there is
an expanded language $L_A \subseteq A$  and a
pithy $\Pi_2$ theory $\fT_A \subseteq \Lwow(L_A) \cap A$
such that 
\begin{itemize}
\item[(i)]
for every formula $\varphi \in \Lwow(L)\cap A$,
there is some atomic formula $R_\varphi \in L_A$ such that
\[\fT_A \models (\forall \xx)\Bigl[\bigl((\forall w) R_\varphi(\xx,w) \leftrightarrow \varphi(\xx) \bigr) \And \bigl((\exists w)R_\varphi(\xx,w) \leftrightarrow \varphi(\xx) \bigr)\Bigr], \] 
where $\xx$ is the tuple of free variables of $\varphi$,
\item[(ii)]
every $L$-structure has a
unique expansion to an $L_A$-structure that satisfies $\fT_A$, and
\item[(iii)]
$\fT_A$ implies that every atomic formula of $\Lww(L_A) \setminus \Lww(L)$
is equivalent to some formula of $\Lwow(L)\cap A$.
\end{itemize}
\end{lemma}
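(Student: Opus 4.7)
The plan is to carry out a Morleyization relativized to $A$, with a twist needed for the pithy $\Pi_2$ requirement: for each $\varphi \in \Lwow(L) \cap A$ with free variables $\xx$, introduce a fresh relation symbol $R_\varphi$ of arity $|\xx|+1$, and let $L_A \defas L \cup \{R_\varphi \st \varphi \in \Lwow(L) \cap A\}$. The extra argument slot serves as a dummy: the axioms of $\fT_A$ will force the interpretation of $R_\varphi(\xx,w)$ to be independent of $w$, but its presence lets us cast each axiom in the form $(\forall \xx)(\exists y)\theta$ rather than the plainly universal form produced by standard Morleyization. Admissibility of $A$ then ensures that $L_A, \fT_A \in A$ by a $\Sigma_1$-recursion over formula complexity, since $\Lwow(L) \cap A \in A$.

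The theory $\fT_A$ is built by recursion on $\varphi$. If $\varphi$ is atomic in $L$, add $(\forall \xx,w)(\exists u)[R_\varphi(\xx,w) \leftrightarrow \varphi(\xx)]$. If $\varphi \equiv \neg\psi$, add $(\forall \xx,w)(\exists v)[R_\varphi(\xx,w) \leftrightarrow \neg R_\psi(\xx,v)]$. If $\varphi \equiv \bigwedge_{i\in I}\psi_i$, add, for each $i \in I$, the axiom $(\forall \xx,w)(\exists u)[R_\varphi(\xx,w) \to R_{\psi_i}(\xx,u)]$, together with the single axiom $(\forall \xx,w)(\exists u)[\bigwedge_i R_{\psi_i}(\xx,u) \to R_\varphi(\xx,w)]$. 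If $\varphi \equiv (\exists y)\psi(\xx,y)$, add both $(\forall \xx,y,w)(\exists u)[R_\psi(\xx,y,u) \to R_\varphi(\xx,w)]$ and $(\forall \xx,w)(\exists y)[R_\varphi(\xx,w) \to R_\psi(\xx,y,w)]$. Each axiom has a quantifier-free kernel---possibly involving a countable conjunction of $L_A$-atomic formulas---preceded by a $\forall$-block and a single $\exists$-variable, hence is pithy $\Pi_2$.

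I would verify (i) by induction on $\varphi$, showing that $\fT_A$ entails both $(\forall \xx)[((\forall w)R_\varphi(\xx,w)) \leftrightarrow \varphi(\xx)]$ and $(\forall \xx)[((\exists w)R_\varphi(\xx,w)) \leftrightarrow \varphi(\xx)]$; in each inductive case, once the inductive hypothesis guarantees that $R_\psi(\xx,\cdot)$ is a constant relation for each subformula $\psi$, the axioms above combine to force $R_\varphi(\xx,\cdot)$ to be the constant relation taking the truth value of $\varphi(\xx)$. Property (ii) then follows: given an $L$-structure $\M$, interpret $R_\varphi^\M(\aa,b)$ to hold iff $\M \models \varphi(\aa)$, independently of $b$; one checks by induction that this interpretation satisfies $\fT_A$, and (i) shows it is the only one that does. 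Property (iii) is immediate: under $\fT_A$ the atomic formula $R_\varphi(\xx,w)$ is equivalent to $\varphi(\xx) \in \Lwow(L) \cap A$, and since $L_A \setminus L$ consists solely of relation symbols, these are the only atomic $\Lww(L_A)$-formulas lying outside $\Lww(L)$.

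The main obstacle is the pithy $\Pi_2$ bookkeeping: standard Morleyization yields $\Pi_1$-style axioms like $R_\varphi(\xx) \leftrightarrow \varphi(\xx)$, and for existential $\varphi$ it produces biconditionals whose two sides have unequal quantifier complexity---neither shape is pithy $\Pi_2$. The dummy-slot trick addresses this at two levels: the extra $w$-argument replaces a single biconditional by the coupled $(\forall w)$ and $(\exists w)$ forms in (i), and splitting each biconditional further into a pair of implications equipped with a single existential variable---playing either the role of a genuine witness (as in the $(\exists y)\psi$ case) or of a variable that becomes free once the inductive hypothesis collapses the subformula's predicate to a constant---brings every axiom into pithy $\Pi_2$ form. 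The remaining check that $L_A, \fT_A \subseteq A$ is then a standard admissibility argument.
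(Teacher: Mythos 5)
Your proposal is correct and follows essentially the same route as the paper: introduce a dummy extra argument slot for each new relation symbol $R_\varphi$, build $\fT_A$ by recursion on formula complexity, and use the dummy slot together with a split of each biconditional into two implications to land in pithy $\Pi_2$ form (the paper writes the biconditional axioms first and then invokes the general facts that biconditionals split into implications and that $\Pi_1$ sentences are equivalent to pithy $\Pi_2$ ones, whereas you place the existentials directly — but this is presentational, not substantive). One small bookkeeping slip: in your conjunction case $R_{\psi_i}(\xx,u)$ should be $R_{\psi_i}(\zz_i,u)$, where $\zz_i\subseteq\xx$ is the subtuple of free variables of $\psi_i$, since $R_{\psi_i}$ has arity $|\mathrm{free}(\psi_i)|+1$ rather than $|\xx|+1$.
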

\begin{proof}
Consider the countable language $L_A \defas L \cup \{R_\psi\st \psi\in A\} $, where each relation symbol $R_\psi$ is a distinct element of $A\setminus L$ and has arity one more than the number of free variables in $\psi$.  

Let $\fT_A$ be the countable $\Lwow(L_A)$-theory consisting of the following $\Pi_2$ sentences:
\begin{itemize}
\item 
$(\forall \xx,w)[R_P(\xx,w) \leftrightarrow P(\xx)]$ for $P$ a relation symbol in $L$ of arity $|\xx|$,
\item 
$(\forall \xx,w)[R_{c}(y, w) \leftrightarrow
 c=y]$ for $c$ a constant symbol in $L$,
\item 
$(\forall \xx,w)[R_{f}(\xx, y, w) \leftrightarrow
 f(\xx)=y]$ for $f$ a function symbol in $L$ of arity $|\xx|$,
\item $(\forall \xx,w)[R_{\neg \psi}(\xx,w) \leftrightarrow \neg R_\psi(\xx,w)]$,
\item $(\forall \xx,w)[R_{\bigwedge_{i\in I} \psi_i}(\xx,w) \leftrightarrow \bigwedge_{i\in I}R_{\psi_i}(\zz_i,w)]$,
\item $(\forall \xx,w)[R_{(\exists y)\varphi}(\xx,w) \leftrightarrow (\exists y) R_{\varphi}(\xx,y,w)]$, and
\item $(\forall \xx,w)[R_{(\exists y)\psi}(\xx,w) \leftrightarrow (\exists y) R_{\psi}(\xx,w)]$,
\end{itemize}
where $\xx$ is a tuple containing precisely the free variables of $\psi \in A$, where $\bigwedge_{i\in I} \psi_i\in A$, where  the tuple $\zz_i\subseteq \xx$ contains precisely the free variables of $\psi_i$  for each $i \in I$, and where the free variables  of $\varphi\in A$ are precisely the variables in $\xx y$, with $y\not\in\xx$.

Note that $(\forall \xx)[\varphi(\xx) \leftrightarrow \psi(\xx)]$ is equivalent to 
$(\forall \xx)[\varphi(\xx) \to \psi(\xx)] \wedge (\forall \xx)[\psi(\xx) \to \varphi(\xx)]$.
Hence $\fT_A$ is equivalent to a theory all of whose axioms are either $\Pi_1$ or pithy $\Pi_2$.
Further, every $\Pi_1$ sentence is equivalent to some pithy $\Pi_2$ sentence. Hence we may assume without loss of generality that 
$\fT_A$ itself is a pithy $\Pi_2$ theory.

Observe that $\fT_A \subseteq A$ and that
\[\fT_A \models (\forall \xx)\Bigl[\bigl((\forall w) R_\varphi(\xx,w) \leftrightarrow \varphi(\xx) \bigr) \And \bigl((\exists w)R_\varphi(\xx,w) \leftrightarrow \varphi(\xx) \bigr)\Bigr], \] 
for all $\varphi\in \Lwow(L_A)\cap A$, where $\xx$ is the tuple of free variables of $\varphi$.

Note that in the definition of $\fT_A$, we included the dummy variable $w$ in order to ensure that for every $\psi\in A$, there is a universal formula that is equivalent to $\psi$ in every model of $\fT_A$, even for quantifier-free $\psi$.  
This is needed in order for
$\fT_A$ to itself be pithy $\Pi_2$, 
which often is not required
in the usual first-order Morleyization procedure \cite[Theorem~2.6.6]{MR1221741}.

An immediate generalization of
\cite[Theorem~2.6.5]{MR1221741} to countable fragments of $\Lwow(L)$ shows that
every $L$-structure $\cM$ has a unique expansion to an $L_A$-structure that satisfies $\fT_A$.
Finally, $\fT_A$ implies that every atomic formula of $\Lww(L_A) \setminus \Lww(L)$ is
equivalent to some formula of $\Lwow(L)\cap A$.
\end{proof}

We will make use of Lemma~\ref{morleyization} in the proof of Proposition~\ref{backward}.

For a first-order theory $T \subseteq \Lww(L)$, we define the \defn{pithy $\Pi_2$ expansion of $T$} to be the $\Lww(L_\HF)$-theory 
\[
\fT_\HF \cup \{ (\forall x) R_\varphi(x) \st \varphi \in T\},
\]
where $\HF$ denotes the hereditarily finite sets. 
We will make use of this notion in
Lemma~\ref{Tstar-pithy} and Theorem~\ref{SplittingTypesTheorem}.

\subsection{\Fr\ limits and trivial definable closure}
\label{Frlimits}
Suppose that the countable language $L$ is \defn{relational}, i.e., does not
contain constant or function symbols.
The \defn{age} of an $L$-structure  $\M$ is defined to be the class of all
finite $L$-structures isomorphic to 
a
substructure of $\M$.

A countable $L$-structure $\M$ is said to be
\defn{ultrahomogeneous} when
 any partial isomorphism between finite substructures of $\M$ can be extended to automorphism of $\M$.
Any two ultrahomogeneous countably infinite $L$-structures have the same
age if and only if they are isomorphic.
The age  of any ultrahomogeneous countably infinite \linebreak $L$-structure
is a 
class that contains
countably infinitely many isomorphism types and that satisfies
the so-called 
hereditary property, 
joint embedding property,
and
amalgamation property.
Conversely,
any 
class of finite $L$-structures that is closed under isomorphism, contains countably infinitely many isomorphism types, and that satisfies
these
three
properties is the age of some ultrahomogeneous countably infinite $L$-structure, in fact a unique such structure (up to isomorphism),
called its \defn{\Fr\ limit};
such a class of finite structures is called an \defn{amalgamation class}.
An amalgamation class 
is called a \defn{strong amalgamation class} when it
further satisfies  the strong amalgamation property
--- namely, when any two elements of the class can be amalgamated over any finite common 
substructure in a non-overlapping way.

It is a standard fact that the first-order theory of any \Fr\ limit in a finite relational
language has an axiomatization consisting of 
pithy
$\Pi_2$ sentences that are first-order. These axioms are often referred to as \emph{(one-point)
extension axioms}.
For more details, see, e.g., \cite[\S7.1]{MR1221741}.

Let $\M$ be an $L$-structure and let $M$ be its underlying set. Suppose 
$X \subseteq M$.
The \defn{definable closure} of $X$ in $\M$, written
$\dcl(X)$,
is the set of all elements of $M$ that are fixed by every
automorphism of $\M$ fixing $X$ pointwise. We say that $\M$ has
\defn{trivial definable closure} when
$\dcl(\aa) = \aa$ for all finite tuples $\aa\in \M$.
An
ultrahomogeneous countably infinite structure
$\M$ 
in a relational language
has trivial
definable closure if and only if its age has the strong amalgamation
property (again see \cite[\S7.1]{MR1221741}).

\subsection{Transitive $G$-spaces}
Let $(G, e, \cdot)$ be a Polish group.
We now recall the notion of a \emph{transitive Borel
$G$-space}.
\begin{definition}
A \defn{Borel $G$-space} $(X, \circ)$ consists of
a Borel space $X$ along with a Borel map $\circ\colon G \times X \rightarrow X$ such that
\begin{itemize}
\item $(g \cdot h) \circ x = g \circ (h \circ x)$
for every $g, h \in G$ and $x \in X$, and
\item $ e \circ x = x$ for every $x  \in X$.
\end{itemize}
For Borel $G$-spaces 
$(X, \circ_X)$ and  $(Y, \circ_Y)$,
a \defn{map} 
$\tau$ between 
$(X, \circ_X)$ and  $(Y, \circ_Y)$
is a
Borel map 
$\tau \colon X  \rightarrow Y$
for which
$\tau(g\circ_X x) = g \circ_Y \tau(x)$ for all
$g \in G$ and  $x \in X$.

A Borel $G$-space  $(X, \circ)$ is a \defn{universal} Borel $G$-space when every other Borel $G$-space maps injectively into it.
\end{definition}

\begin{definition}
A Borel $G$-space $(X, \circ)$ is \defn{transitive}
when for every $x, y \in X$ there is some $g\in G$ such that $g \circ x = y$,
i.e.,
the action $\circ$ has a single orbit. Equivalently,
there is no proper subspace $Y \subseteq X$ such that $(Y, \circ)$ is also a Borel $G$-space.
\end{definition}

Note that in particular, any orbit of a Borel $G$-space is itself a
transitive Borel $G$-space under the restricted action.

The main result of Section~\ref{GorbitSec} is a classification of \emph{transitive} Borel $G$-spaces for certain groups $G$.

\subsection{Structures and automorphisms}
We consider three types of countable structures: those
with underlying set $\Nats$,
those
with a fixed countable set of constants disjoint from $\Nats$, and
those
with underlying set $\Nats$ whose restriction to a sublanguage
is some fixed structure.

\subsubsection{The Borel space of countable structures}

We now define the Borel space $\Models_L$ and its associated \emph{logic
action}.
These notions will be used throughout the
paper, and especially in Sections~\ref{simplified}, \ref{almostSec},
and \ref{GorbitSec}.

\begin{definition}
\label{StrDef}
Let $L$ be a countable language.
Define $\Models_{L}$ to be the set of $L$-structures with underlying set
$\Nats$.
\end{definition}

\begin{definition}
Let $L$ be a countable language.
Then for every $\Lwow(L)$-formula $\varphi$,
define
\[\llrr{\varphi(\ell_1, \dots, \ell_j)} \defas
 \{\M \in \Models_{L} \st \M\models \varphi(\ell_1, \dots, \ell_j)\}\]
for all $\ell_1, \ldots, \ell_j \in \Nats$, where $j\in\Nats$ is the number of free
variables (possibly $0$) of $\varphi$.
\end{definition}

When $\Models_{L}$ is equipped with the $\sigma$-algebra consisting
of all such sets
$\llrr{\varphi(\ell_1, \dots, \ell_j)}$,
it becomes a standard Borel
space; for details, see \cite[\S2.5]{MR1425877}.
Note that when we say that a probability measure is concentrated on some class of models of an $L$-theory, we mean
that the measure is concentrated on the restriction of that class to $\Models_L$.

\begin{definition}
For a non-empty set $A$, we write $\sympar{A}$ to denote the symmetric group on
$A$.
For $n\in\Nats$,
we
write $\sympar{n}$
to denote
$\sympar{\{0, \ldots, n-1\}}$, and we will use $\sym$ to denote $S_\Nats$, the symmetric group on $\Naturals$.
\end{definition}

\begin{definition}[{\cite[\S2.5]{MR1425877}}]
Let $L$ be a countable language.
Define
the Borel \linebreak $\sym$-action
\[\logicact \colon \sym \times
\Models_{L}
\to \Models_{L}\]
to be such that
for all $g\in \sym$ and $\M \in \Models_{L}$,
\[
g \logicact \M \models \varphi(\ell_1, \ldots, \ell_j)
\]
if and only if
\[
\M \models \varphi\bigl(g^{-1}(\ell_1), \ldots, g^{-1}(\ell_j)\bigr)
\]
for
all $\Lwow(L)$-formulas $\varphi$
and
all $\ell_1, \ldots, \ell_j \in\Nats$, where $j$ is the number
of free variables of $\varphi$.
\end{definition}

\subsubsection{Countable structures with a fixed set of constants}

We now define the analogous notions for the situation where we
instantiate constants by elements other than ones from $\Naturals$. We will
need these notions in Section~\ref{invlimitconstruction}.

\begin{definition}
\label{StrDefcC}
Let $L$ be a countable language and let $C$ be the set of its constant
symbols (possibly empty).
Let $C_0$ be a countable set  (empty when $C$ is empty) that is disjoint from $\Nats$, and
suppose 
$\cC_0 \colon C \to C_0$
is a surjective function.
Then define $\Models_{\cC_0, L}$ to be the set of $L$-structures with underlying set
$\Nats \cup C_0$ in which
the instantiation of $c$ is $\cC_0(c)$,
for each constant symbol $c\in C$.
In particular, 
no
element of $\Naturals$ 
instantiates
any constant symbol of $L$.
\end{definition}

Note that when $L$ has no constant symbols, then $C = C_0 = \emptyset$ and $\cC_0$
is the empty function, and we have
$\Models_{\cC_0, L} = \Models_L$.

\begin{definition}
Let $L$ be a countable language with $C$ its set of constant symbols, and
let $C_0$ and $\cC_0$ be as in Definition~\ref{StrDefcC}.
Then for every $\Lwow(L)$-formula $\varphi$,
define
\[\llrrsub{\varphi(\ell_1, \dots, \ell_j)}{\cC_0}
\defas
 \{\M \in \Models_{\cC_0, L} \st \M\models \varphi(\ell_1, \dots, \ell_j)\}\]
for all $\ell_1, \ldots, \ell_j \in \Nats$, where $j\in\Nats$ is the number of free
variables (possibly $0$) of $\varphi$.
\end{definition}

When $\Models_{\cC_0, L}$ is equipped with the 
$\sigma$-algebra consisting
of all such sets
\linebreak
$\llrrsub{\varphi(\ell_1, \dots, \ell_j)}{\cC_0}$,
it likewise becomes a
standard Borel space.

\begin{definition}
Let $L$ be a countable language with $C$ its set of constant symbols, and
let $C_0$ and $\cC_0$ be as in Definition~\ref{StrDefcC}.
Define $\sym^{C_0} \subseteq \sympar{\Nats\cup C_0}$ to be the subgroup consisting of all
permutations of $\Nats \cup C_0$ fixing $C_0$ pointwise.
Define
the Borel $\sym^{C_0}$-action
\[\logicactcz \colon \sym^{C_0} \times
\Models_{\cC_0, L}
\to \Models_{\cC_0, L}\]
to be such that
for all $g\in \sym^{C_0}$ and $\M \in \Models_{\cC_0, L}$,
\[
g \logicactcz \M \models \varphi(\ell_1, \ldots, \ell_j)
\]
if and only if
\[
\M \models \varphi\bigl(g^{-1}(\ell_1), \ldots, g^{-1}(\ell_j)\bigr)
\]
for
all $\Lwow(L)$-formulas $\varphi$
and
all $\ell_1, \ldots, \ell_j \in\Nats$, where $j$ is the number
of free variables of $\varphi$.
\end{definition}

Note that any permutation of $\Nats$ extends uniquely to a permutation of
$\Nats \cup C_0$ that fixes $C_0$ pointwise, and every such permutation
of $\Nats \cup C_0$ restricts to a permutation of $\Nats$, and
hence $\sym \cong \sym^{C_0}$.

\subsubsection{Relativized notions via sublanguages}
\label{MLZ-sec}
Finally, we consider structures with underlying set $\Nats$ whose restriction to a sublanguage
is some fixed structure. We will make use of 
such structures
in
Section~\ref{GorbitSec}.

\begin{definition}
Let $L$ be a countable language and let $\M$  be an $L$-structure with
underlying set $\Nats$.
We write $\Aut(\M)$ to denote the \defn{automorphism group} of $\M$, i.e., the
subgroup of $\sym$ consisting of all permutations of $\Nats$ that preserve
every relation, constant, and function of $\M$.
\end{definition}

\begin{definition}
Let $L$ be a countable language and let $L_0$ be a sublanguage of $L$.
Let $\M_0$ be an $L_0$-structure on $\Nats$.
Define $\Models_{L_0,L}^{\M_0}$ to be the collection of 
those structures in $\Models_L$
whose restriction to $L_0$ is
$\M_0$, i.e.,
\[
\Models_{L_0,L}^{\M_0}
\defas \{\M \in \Models_L \st \M|_{L_0} = \M_0
\}.
\]
\end{definition}

Note that when $L$ has no constant symbols, $L_0$ is the empty language,
$\M_0$ is the empty structure, and $\cC_0$
is the empty function, we have
$\Models_{L_0,L}^{\M_0} = \Models_{\cC_0, L} = \Models_L$.
If $L$ does have constant symbols, but $L_0$ and $\M_0$ are empty,
then we still have
\linebreak
$\Models_{L_0,L}^{\M_0} =  \Models_L$.

\begin{definition}
Let $L$ be a countable language and let $L_0$ be a sublanguage of $L$.
Let $\M_0$ be an $L_0$-structure on $\Nats$.
Then for every $\Lwow(L)$-formula $\varphi$,
define
\[\llrrsub{\varphi(\ell_1, \dots, \ell_j)}{\M_0}
\defas
 \{\M \in 
\Models_{L_0,L}^{\M_0} \st
\M\models \varphi(\ell_1, \dots, \ell_j)\}\]
for all $\ell_1, \ldots, \ell_j \in \Nats$, where $j\in\Nats$ is the number of free
variables (possibly $0$) of $\varphi$.
\end{definition}

When $\Models_{L_0,L}^{\M_0}$ is equipped with the 
$\sigma$-algebra consisting
of all such sets
\linebreak
$\llrrsub{\varphi(\ell_1, \dots, \ell_j)}{\M_0}$,
it also becomes a
standard Borel space.

\begin{definition}[{\cite[\S2.7]{MR1425877}}]
Let $L$ be a countable language and let $L_0$ be a sublanguage of $L$.
Let $\M_0$ be an $L_0$-structure on $\Nats$.
Define the \defn{relativized logic action}
\[\logicact^{\M_0} \colon \Aut(\M_0)\times
\Models_{L_0,L}^{\M_0}
\rightarrow
\Models_{L_0,L}^{\M_0}
\]
to be
the restriction of the action $\logicact\colon \sym\times \Models_L\rightarrow
\Models_L$.
\end{definition}

\section{Toy construction}
\label{simplified}
We now provide 
a toy
construction of invariant measures via
limits of finite structures, where the measure is concentrated on the
isomorphism class of a single graph.
This is  
a simplification of a special case
of 
the main construction of 
this
paper, which we present in order to illustrate several motivating ideas,
in a considerably easier setting.
This toy construction is also a variant of a special case of the main
construction of
\cite{AFP},
where it is shown
that whenever a countably infinite structure $\M$ in a
countable language $L$ has trivial definable closure, there is an
$\sym$-invariant measure on $\Models_L$ concentrated on
the isomorphism class of $\M$.

All graphs in this section will be simple graphs, i.e., undirected
unweighted graphs with no loops or multiple edges. Model-theoretically,
such a graph is considered to be a structure in the \emph{language of graphs,}
i.e., a language 
consisting of
a single binary relation symbol (interpreted as the edge relation), in which 
the
edge relation is symmetric
and irreflexive.

This 
toy
construction 
applies only to
the special case where the target structure is an
ultrahomogeneous
countably infinite 
graph
having trivial definable closure.
Admittedly, there are not many such structures: only a small number of parametrized classes of
countably infinite
graphs are ultrahomogeneous (see \cite{MR583847}),
and fewer still
have trivial definable closure (see, e.g., \cite{AFP}) ---
and even those have been treated before (essentially in \cite{MR2724668}).
However, 
this toy construction 
serves
to illustrate
some
of the
key ideas of the main construction.
In fact, the case of graphs 
is
particularly simple, because 
it allows us to
make  use of results from the theory of dense graph limits.

Roughly speaking, given a target countably infinite ultrahomogeneous graph, we
will build a sequence of finite graphs such that 
subgraphs sampled
from them (in
an appropriate sense) look more and more like induced ``typical'' subgraphs
of the target. Then the distribution of an appropriate limit of the random
graphs resulting from this sequence of sampling procedures
will constitute the invariant measure
concentrated on the isomorphism class of our target.

Our 
construction of the sequence of finite graphs 
resembles a
directed system of finite graphs.
This motivates  our main construction in
Section~\ref{invlimitconstruction}, which is 
built from
directed systems in a more
precise sense.

A key notion in the 
toy
construction will be that of ``duplication'', where\-by
a sequence of elements 
branches
into multiple copies that stand in
parallel
relationship to each other.
This notion, too, will be essential in the main construction.

Suppose $\M$ is a countably
infinite graph with underlying set $M$ that is a \Fr\ limit whose age
has the strong amalgamation
property; 
recall that for relational languages,
this
property is equivalent to $\M$ having trivial definable closure.

The strong amalgamation property
implies 
an
important property that we call \emph{duplication of quantifier-free
types}:
given any finite subset $A\subseteq M$ and any element 
$s\in M \setminus A$,
there is some 
$s'\in M\setminus A$  such that the quantifier-free type of
$A\cup \{s\}$ is the same as the quantifier-free type of 
$A \cup \{s' \}$.

As a consequence of this duplication property, for any $s_1, \ldots,
s_n \in M$, we can find sets $S_1, \ldots, S_n \subseteq \M$ of arbitrary
finite sizes such that each $s_i \in S_i$, and such that
for any tuple $s'_1, \ldots, s'_n$  satisfying $s'_i \in S_i$ for $1\le i\le n$,  the
quantifier-free type of $s'_1, \ldots, s'_n$ is the same as the
quantifier-free type of $s_1, \ldots, s_n$. We call the sequence $S_1,
\ldots, S_n$ a 
\defn{branching}
of $s_1, \ldots, s_n$, and say that each
$s_i$ 
branches
into $|S_i|$-many 
\defn{offshoots}.

\subsection{Convergence and graph limits}

As above, let $\M$ be an arbitrary countably infinite ultrahomogeneous graph
whose age has the strong amalgamation property.
We will construct a probability measure on countably infinite graphs with
underlying set $\Nats$ that is invariant under arbitrary permutations of
$\Nats$ and is concentrated on the isomorphism class of $\M$. We will do so
by constructing a sequence
$\<\M_i\>_{i\in\Nats}$ of 
graphs  of increasingly large finite size,
and considering the corresponding sequence of infinite random graphs
$\bigl\<\GG(\Nats, \M_i)\bigr\>_{i\in\Nats}$.

\begin{definition}
\label{LovDef}
Let $G$ be a finite graph.
The \defn{infinite random graph induced from $G$ with replacement}, written
$\GG(\Nats, G)$, is
a countably infinite random graph with underlying set $\Nats$ with edges
defined as follows.
Let $\<x_i\>_{i\in\Nats}$ be a sequence of elements of $G$ uniformly independently sampled
with replacement.
Then distinct $j, k\in\Nats$ have an edge between them  in $\GG(\Nats, G)$
precisely when $x_j$ and $x_k$ have an edge between them in $G$.
\end{definition}

This sampling procedure has arisen independently a number of times; see
\cite[\S10.1]{MR3012035} for some of its history.
The form we use can be concisely described using the theory of dense graph
limits, or \emph{graphons};
see \cite[\S11.2.2]{MR3012035} for details. 
That work describes,
given a graphon, a distribution on countably
infinite graphs  built from that graphon, called the \emph{countable random
graph model}. This distribution
corresponds to the distribution of $\GG(\Nats, G)$
in Definition~\ref{LovDef}
in the case where the graphon in question is the step-function built from
$G$ (\cite[\S7.1]{MR3012035}). Note, however, that 
this distribution
does not 
cohere
with
the definition in \cite[\S10.1]{MR3012035} of $\GG(k, G)$ for finite $k$ bounded by the number of
vertices of $G$, which involves sampling without replacement.

Our goal
is to find a sequence of finite graphs $\<\M_i\>_{i\in\Nats}$ as above, such that
the sequence of random variables $\<\GG(\Nats, \M_i)\>_{i\in\Nats}$ converges
in distribution to a random graph
that is almost surely isomorphic to $\M$, and
whose distribution is
invariant under permutations of $\Nats$.
The invariance will be automatic, as
each $\GG(\Nats, \M_n)$ is obtained via i.i.d.\ sampling, as described 
in the definition.
In order to show the convergence, we will use results from the theory of
graphons.

Given
a graph $G$, we
write $v(G)$ to denote the number of vertices of $G$.

\begin{definition}
Let $F, G$ be 
finite graphs. Let $k = v(F)$ and $n=v(G)$.
Then $\tind(F, G)$, the \defn{full homomorphism density},
is defined to be the 
fraction of maps
from $F$ 
to
$G$ that preserve both adjacency and non-adjacency, 
i.e., \[\tind(F, G) = \frac{\Full(F, G)}{n^k},\] where $\Full(F,G)$ is the number of homomorphisms from $F$
to $G$ that also preserve non-adjacency.
\end{definition}

The value
$\tind(F, G)$ may also be described in terms of the following random
procedure.
First consider
an independent random selection of
$v(F)$-many
vertices of $G$ chosen uniformly with replacement, each labeled with the corresponding
element of $F$.
(In particular, some vertices of $G$ may be labeled by
multiple vertices of $F$.)
Then $\tind(F, G)$ is the probability that
the 
graph with labels from $F$ induced by the sampling procedure
is a labeled copy of $F$,
preserving both edges and non-edges.

This notion of a full homomorphism
occurs in the graph homomorphism literature, e.g.,
in \cite[\S1.10.10]{MR2089014}.
Note, however, that $\tind$
is somewhat different from the various 
densities that are typically
used in the study of graph limits, 
namely, 
the density $t$ of homomorphisms,
$t_\mathrm{inj}$ of
injective homomorphisms,  and
$t_\mathrm{ind}$ of
induced injective homomorphisms, i.e., embeddings; for details see
\cite[\S5.2.2]{MR3012035}.

\begin{definition}
We say that a sequence of finite graphs $\<G_i\>_{i\in\Nats}$ is \defn{unbounded}
when  $\lim_{i\to\infty} v(G_i) = \infty$.
\end{definition}

The following definition of a type of convergence is slightly nonstandard as it uses $\tind$, but is
equivalent to the more usual definitions in the literature on dense
graph limits, which involve
the other density notions,  as described in
the discussion in the beginning of \cite[\S11.1]{MR3012035}.
\begin{definition}
An unbounded sequence of finite graphs
$\<G_i\>_{i\in\Nats}$
is \defn{convergent} when the sequence of induced subgraph densities
\[\bigl\<\tind(F, G_i)\bigr\>_{i\in\Nats}\]  converges for every finite graph $F$.
\end{definition}

\begin{theorem}[{\cite[Theorem~11.7]{MR3012035}}]
\label{LovMain}
Let $\<G_i\>_{i\in\Nats}$ be an 
unbounded sequence of finite graphs that is convergent.
Then  $\bigl\<\GG(\Nats, G_i)\bigr\>_{i\in\Nats}$ converges in distribution
to a countably infinite random graph whose distribution is
an $\sym$-invariant measure.
\end{theorem}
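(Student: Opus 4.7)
The strategy is to reduce convergence in distribution of $\GG(\Nats,G_i)$ to convergence on a countable generating $\pi$-system of cylinder sets, identify each such cylinder probability with a full homomorphism density in $G_i$, and then observe that $\sym$-invariance passes to the weak limit.

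First, since the language $L$ of graphs is finite and relational, $\Models_L$ is a standard Borel space whose $\sigma$-algebra is generated by the countable family of cylinder sets
\[C_{k,H} \defas \{\M \in \Models_L \st \M|_{\{0,\ldots,k-1\}} = H\},\]
ranging over $k \in \Nats$ and labeled graphs $H$ on $\{0,\ldots,k-1\}$. Because this family is a $\pi$-system generating the Borel $\sigma$-algebra, convergence in distribution of $\<\GG(\Nats,G_i)\>_{i \in \Nats}$ is equivalent to convergence of $\Pr[\GG(\Nats,G_i) \in C_{k,H}]$ for every $k$ and every $H$.

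Second, by the definition of $\GG(\Nats,G_i)$, the event $\GG(\Nats,G_i) \in C_{k,H}$ depends only on the first $k$ samples $(x_0,\ldots,x_{k-1})$ drawn i.i.d.\ uniformly from $V(G_i)$, and holds exactly when for every pair $j \ne l$ the pair $\{x_j,x_l\}$ is an edge of $G_i$ iff $\{j,l\}$ is an edge of $H$. Comparing with the random procedure described just after the definition of $\tind$, this probability is exactly $\tind(H,G_i)$; the fact that $G_i$ has no loops means that sample collisions $x_j=x_l$ with $j\ne l$ contribute only to matches with $\{j,l\} \notin E(H)$, consistent with the full-homomorphism definition. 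By the convergence hypothesis, $\tind(H,G_i)$ converges as $i\to\infty$ for every finite $H$, so each cylinder probability converges. The limit values are automatically consistent under restriction (the restriction of an induced labeled subgraph is an induced labeled subgraph), so by Kolmogorov extension they are the marginals of a unique probability measure $\mu$ on $\Models_L$, which is therefore the weak limit of the $\GG(\Nats,G_i)$.

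Third, for $\sym$-invariance: each $\GG(\Nats,G_i)$ is built from an i.i.d.\ sequence, so its law on $\Models_L$ is exchangeable under reordering of $\Nats$, i.e., $\sym$-invariant. Concretely, for each $g \in \sym$ and each cylinder set $C_{k,H}$, the set $g \logicact C_{k,H}$ is again a cylinder set, determined by the image $g(\{0,\ldots,k-1\})$, and $\Pr[\GG(\Nats,G_i) \in C_{k,H}] = \Pr[\GG(\Nats,G_i) \in g \logicact C_{k,H}]$ for every $i$. Passing to the limit, the same equality holds for $\mu$ on the generating $\pi$-system, so $\mu$ is $\sym$-invariant. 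The only step calling for care is the computation of $\Pr[\GG(\Nats,G_i) \in C_{k,H}]$ in the second paragraph, where one must handle sample collisions correctly; the loopless assumption makes this routine and matches the definition of $\tind$ with no adjustments.
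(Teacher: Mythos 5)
The paper offers no proof of this statement; it is quoted directly from \cite[Theorem~11.7]{MR3012035}, where it is established via the Lov\'asz--Szegedy theory of dense graph limits (one passes to a limit graphon $W$ and identifies the weak limit of the $\GG(\Nats, G_i)$ with the exchangeable random graph sampled from $W$). Your proof is correct and takes a genuinely more elementary route that bypasses graphons entirely. The central observation --- that $\Pr[\GG(\Nats, G_i) \in C_{k,H}]$ equals $\tind(H, G_i)$, with sample collisions handled correctly by looplessness of $G_i$ --- is exactly the identification the paper itself makes in the proof of Corollary~\ref{mainGraphonCor}, just run in the opposite direction; with that in hand you reduce to convergence of finite-dimensional marginals on the compact space $\Models_L$, note the limiting marginals are consistent, apply Kolmogorov--Carath\'eodory extension to get the weak limit, and observe that exchangeability of each $\GG(\Nats, G_i)$ passes to the limit because the clopen cylinders form a convergence-determining class. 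This is cleaner and self-contained for what the paper needs; what it does not supply, and the graphon route does, is an explicit continuum-sized limit object realizing the measure, which is not used here. One small point worth stating explicitly: the limit measure is concentrated on honest (symmetric, irreflexive) graphs, since that is a closed subset of $\Models_L$ to which each $\GG(\Nats, G_i)$ assigns full mass.
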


In fact, every such $\sym$-invariant measure is ergodic, as shown by Aldous \cite[Lemma~7.35]{MR2161313}; for an argument involving graph limits, see \cite[Proposition~3.6]{JGT:JGT20611}.

\begin{corollary}
\label{mainGraphonCor}
Let $\<G_i\>_{i\in\Nats}$ be an unbounded sequence of finite graphs.
Suppose the limiting probability
\[\lim_{i \to \infty} \Pr\bigl( \GG(\Nats, G_i) \models q(0,\ldots, \ell-1)\bigr)\]
exists for every quantifier-free type $q$ in the language of graphs,
where $\ell$ is
the number of free variables of $q$.
Then  $\bigl\<\GG(\Nats, G_i)\bigr\>_{i\in\Nats}$ converges in distribution
to an $\sym$-invariant measure on countably infinite graphs.
\end{corollary}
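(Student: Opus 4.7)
The plan is to reduce the corollary to Theorem~\ref{LovMain} by showing that the hypothesis on quantifier-free types is equivalent to the unbounded sequence $\<G_i\>_{i\in\Nats}$ being convergent in the sense already defined, i.e., that the induced subgraph densities $\tind(F, G_i)$ converge for every finite graph $F$.

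The key step is to establish, for each finite graph $F$ with vertex set $\{0, \ldots, \ell-1\}$, the identity
\[
\tind(F, G_i) \;=\; \Pr\bigl(\GG(\Nats, G_i) \models q_F(0, 1, \ldots, \ell-1)\bigr),
\]
where $q_F$ is the non-redundant quantifier-free type in the language of graphs on $\ell$ variables that asserts $x_i \neq x_j$ for $i\neq j$ and that specifies $E(x_i,x_j)$ or $\neg E(x_i,x_j)$ according to the edges of $F$. To verify this, I would unfold both sides: by Definition~\ref{LovDef}, $\Pr(\GG(\Nats, G_i) \models q_F(0,\ldots,\ell-1))$ is the probability, over uniform independent sampling with replacement of $(x_0, \ldots, x_{\ell-1}) \in V(G_i)^\ell$, that the map $j \mapsto x_j$ is a homomorphism from $F$ into $G_i$ preserving both adjacency and non-adjacency; the equality inequalities in $q_F$ are automatically satisfied by plugging in the distinct naturals $0, \ldots, \ell-1$, regardless of whether two samples happen to coincide. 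This count matches $\Full(F, G_i)/v(G_i)^\ell = \tind(F, G_i)$ on the nose. A small but essential observation is that although the samples $x_j$ may coincide, any full homomorphism can only identify non-adjacent vertices of $F$ (since $G_i$ is loop-free), so no extra adjustment is needed.

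Once this identity is in hand, the hypothesis of the corollary guarantees that $\<\tind(F, G_i)\>_{i\in\Nats}$ converges for every finite graph $F$, and therefore $\<G_i\>_{i\in\Nats}$ is a convergent unbounded sequence of finite graphs. Invoking Theorem~\ref{LovMain} then yields directly that $\bigl\<\GG(\Nats, G_i)\bigr\>_{i\in\Nats}$ converges in distribution to a countable random graph whose law is $\sym$-invariant, which is the desired conclusion.

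The only subtlety lies in the combinatorial bookkeeping for the identity $\tind(F, G_i) = \Pr(\GG(\Nats, G_i) \models q_F(\ldots))$, specifically making sure that sampling-with-replacement and the ``all $\ell$ variables distinct'' clause of $q_F$ are reconciled correctly; the point is that distinctness lives in the index set $\Nats$ rather than in the sample values, so no correction terms arise. No other step presents any real obstacle, as the conclusion is then immediate from Theorem~\ref{LovMain}.
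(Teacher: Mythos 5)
Your proposal is correct and follows essentially the same approach as the paper: reduce to Theorem~\ref{LovMain} via the identity $\tind(F, G_i) = \Pr\bigl(\GG(\Nats, G_i)\models q_F(0,\ldots,\ell-1)\bigr)$ for the non-redundant quantifier-free type $q_F$ realized by $F$. The paper states this identity without elaboration; your extra remarks about why sampling-with-replacement matches $\Full(F,G_i)/v(G_i)^\ell$ and why the distinctness clauses pose no issue (distinctness lives in the index set $\Nats$) are accurate and just make the bookkeeping explicit.
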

\begin{proof}
By Theorem~\ref{LovMain}, it suffices to show that
$\bigl\<\tind(F, G_i)\bigr\>_{i\in\Nats}$  converges for every finite graph $F$.

Let $F$ be an arbitrary finite graph with underlying set $\{0, \ldots,
n-1\}$, where $n = v(F)$.
Let $q_F$ be the unique non-redundant quantifier-free type with $n$-many free
variables such that
\[
F \models q_F(0, \ldots, n-1).
\]
Note that, for each $j\in\Nats$,
\[\tind(F, G_j) =
\Pr\bigl( \GG(\Nats, G_j) \models q_F(0, \ldots, n-1)\bigr).
\]
Hence
$\bigl\<\tind(F, G_i)\bigr\>_{i\in\Nats}$  converges, as
\[\Bigl\<\Pr\bigl( \GG(\Nats, G_i) \models q_F(0, \ldots, n-1)\bigr)\Bigr\>_{i\in\Nats}\]
converges by hypothesis.
\end{proof}

\subsection{Construction}

Because $\M$ is a \Fr\ limit in a finite relational language, 
as discussed in \S\ref{Frlimits}
we may 
take
its first-order theory $T$ 
to be axiomatized by
pithy $\Pi_2$ 
extension axioms, so that
\[T = \{(\forall \x)(\exists y)\varphi_i(\x,y) \st i \in \Nats\},\]
where each $\varphi_i$ is quantifier-free; we may further assume that
for each 
$i\in \Nats$
there are infinitely many indices
$j\in\Nats$ such that $\varphi_i = \varphi_j$.
We will consider, in
successive stages, each such 
formula $\varphi_i(\x,y)$
and 
every tuple
$\a\in\M$ of the same
length as $\x$, and will look for \defn{witnesses} in $\M$
to $(\exists y)\varphi_i(\a,y)$, i.e., instantiations $b\in \M$ of $y$ that make
$\varphi_i(\a, b)$ hold in $\M$.

Our construction proceeds in stages, at each of which we build a
finite structure larger than that in the previous stage.
We will think of the 
structure
that we build at stage $n$ as
consisting of $(n+1)$-many slices, each built at
a substage.
In the first substage
of stage $n$,
we add a slice that consists of new witnesses to the formula under
consideration (or one new element, if no witnesses are needed). In the remaining substages, we
branch
each
element of each
old slice 
into
some
number of 
offshoots.

Specifically, we divide each stage $n$ into $(n+1)$-many distinct
substages
indexed by pairs $(n, k)$, where $0 \le k \le n$.
The substage $(n, 0)$ involves adding witnesses to extension axioms for everything from stage $n-1$
(as one often does when iteratively building a \Fr\ limit).
The substages
$(n, k)$, for $0 < k \le n$,
consist of successively 
branching
elements.
By duplicating ever larger portions, we cause the
structure to asymptotically stabilize.

More precisely,
at substage $(n, k)$ we will define 
a
structure $\M_n^k$  and
a
set $B(n, n-k)$. \linebreak The intuition is that $B(n,n)$ consists of 
new witnesses,
while $B(n, n-k)$, for \linebreak $k>0$, consists of all
elements of $\M_n^k$ that are 
offshoots
of elements that first appear 
at
substage $(n-k, 0)$. In particular, the underlying set of
$\M_n^k$ will be 
\[
\bigcup_{i = 0}^{n-1-k} B(n-1, i) \ \ \cup \ \bigcup_{i = n-k}^n B(n, i),
\]
because
at substage $(n,k)$, the newly-constructed set
$B(n, n-k)$ contains all elements of
$B(n-1, n-k)$.
\ \\

\noindent \ul{Substage $(0,0)$:} Let $\M_0^0$ be any finite 
substructure
of the \Fr\ limit $\M$,
and let $B(0,0)$ be its underlying set.
\ \\

\noindent \ul{Substage $(n, 0)$, for $n> 0$:}
Let 
$\ell_n$
be one less than the number of free variables in the formula $\varphi_n$.
Let $A$ be the set of those $\a\subseteq \M_{n-1}^{n-1}$ of 
length
$\ell_n$
such that
\linebreak
$\M_{n-1}^{n-1}
\not \models \bigvee_{b \in \a}\varphi_n(\a,b)$. 
We now define $B(n, n)$ and $\M_n^0$. Consider whether or not $A$ is empty.

If $A$ is non-empty, then 
for each $\a \in A$ choose a distinct element
$d_\a\in \M$
that 
satisfies
$\M_n^0 \models \varphi_n(\a, d_\a)$.
We can always find such a collection of witnesses, because our formulas are
realized in the \Fr\ limit $\M$. Furthermore, because $\M$ has strong amalgamation,
by duplication of
quantifier-free types, we may assume that for any distinct tuples $\a, \a' \in
\M_{n-1}^{n-1}$, the elements $d_\a$ and $d_{\a'}$ are distinct.
Define \linebreak $B(n,n) = \{d_\a\st \a \in A\}$ and let
$\M_n^0$ be any substructure of $\M$
extending $\M_{n-1}^{n-1}$ by the elements of
$B(n,n)$.

If 
$A$ is empty,
then let $B(n, n)$ consist of an arbitrary single element of $\M$ not in $\M_{n-1}^{n-1}$, and set $\M_{n}^0$ to be 
the (unique) substructure of $\M$
extending $\M_{n-1}^{n-1}$ by the element of $B(n,n)$.
\ \\

\noindent \ul{Substage $(n, k)$ for $0 < k \le n$:}
Let $\alpha_n \defas  2^{n-1} \, |B(n, n)|$.
Let
$\M_n^k$ 
be any substructure of $\M$ that extends
$\M_n^{k-1}$ 
to some structure in which
each element 
of $B(n-1, n-k)$ branches
into
precisely
$\alpha_n$-many
offshoots, and these are the only new elements.
Let $B(n, n-k)$  be the set of those elements of $\M_n^k$ that are 
an offshoot
of some element of
$B(n-1, n-k)$.
By the definition of $\alpha_n$, we have
\[
\frac{|B(n, n)|}
{|\M^k_n|} \le 2^{-(n-1)}.
\]
This concludes the construction.
\ \\

For notational convenience, we will henceforth refer to $\M^n_n$ as $\M_n$.

In the verification, we will need a particular projection map.
Let $\widetilde{\pi}$ be the following map 
from the union of the underlying sets of all $\M_n$, for $n\in\Nats$, to 
itself.
The map $\widetilde{\pi}$ takes each element
of $B(n,n-k)$ to the element of 
$B(n-k, n-k)$
of which it is 
a $k$-fold
offshoot (i.e., an offshoot's offshoot's offshoot, etc., 
$k$
levels deep),
for $n\in\Nats$ and $0\le k < n$, and the
identity map on each 
$B(n, n)$.
This is well-defined because if an element of the domain is in both
$B(n, k)$ and $B(m, \ell)$,
then $k = \ell$.
(Note that $\widetilde{\pi}$ is not the same as
the projection map $\pi$ defined
in Section~\ref{preliminaries}, though it will play a similar role here
to that of $\pi$ in the main construction in
Section~\ref{invlimitconstruction}.)

\subsection{Verification}

We now show that the sequence of random graphs
$\bigl\<\GG(\Nats, \M_i)\bigr\>_{i\in\Nats}$
converges in distribution 
to a random graph that is almost surely isomorphic to our original graph $\M$.
We show this in two parts: convergence to such a random graph,
whose
distribution is an invariant measure on countable graphs, and concentration
of this invariant measure on the desired isomorphism class.

\begin{proposition}
\label{SimplifiedInvMeas}
The sequence of random graphs $\bigl\<\GG(\Nats, \M_i)\bigr\>_{i\in\Nats}$
converges in distribution 
to a countably infinite random graph whose distribution is
an $\sym$-invariant measure.
\end{proposition}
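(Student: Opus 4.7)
The plan is to invoke Corollary~\ref{mainGraphonCor}. Since $|\M_n|\ge\alpha_n\ge 2^{n-1}$, the sequence $\<\M_i\>_{i\in\Nats}$ is unbounded, so it suffices to show that for every quantifier-free type $q$ in the language of graphs with $\ell$ free variables, the probability
\[
p_n(q) \defas \Pr\bigl(\GG(\Nats,\M_n)\models q(0,\ldots,\ell-1)\bigr)
\]
converges as $n\to\infty$. If $q$ forces an equality among its free variables, then $p_n(q)\le\ell^2/|\M_n|\to 0$ and the limit is trivially $0$, so I may restrict attention to non-redundant $q$.

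I will prove convergence by showing $\sum_n|p_{n+1}(q)-p_n(q)|<\infty$, exploiting the explicit relationship between $\M_n$ and $\M_{n+1}$ furnished by the construction. By design, $\M_{n+1}$ is obtained from $\M_n$ by first adjoining the new witnesses $B(n+1,n+1)$, which constitute at most a $2^{-n}$ fraction of $\M_{n+1}$, and then, through the substages $(n+1,k)$ for $1\le k\le n+1$, branching every element of $\M_n$ into exactly $\alpha_{n+1}$-many offshoots. Composing the substage projections yields a surjection $\rho\colon\M_{n+1}\setminus B(n+1,n+1)\to\M_n$ that is uniformly $\alpha_{n+1}$-to-one. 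The crucial property, obtained by iterating duplication of quantifier-free types across these substages, is that whenever $(x_1,\ldots,x_\ell)\in(\M_{n+1}\setminus B(n+1,n+1))^\ell$ has pairwise distinct projections $\rho(x_1),\ldots,\rho(x_\ell)$, the quantifier-free type of $(x_1,\ldots,x_\ell)$ in $\M_{n+1}$ coincides with that of $(\rho(x_1),\ldots,\rho(x_\ell))$ in $\M_n$.

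With this in hand I decompose the sampling event defining $p_{n+1}(q)$ into three pieces: (a) some sample lies in $B(n+1,n+1)$, contributing at most $\ell\cdot 2^{-n}$; (b) all samples lie in $\M_{n+1}\setminus B(n+1,n+1)$ but two share a projection, contributing at most $\binom{\ell}{2}/|\M_n|$; and (c) all samples lie in $\M_{n+1}\setminus B(n+1,n+1)$ with pairwise distinct projections. On~(c) the type-transfer property combined with the fact that $\rho$ is uniformly $\alpha_{n+1}$-to-one forces the contribution to equal $(|\M_{n+1}\setminus B(n+1,n+1)|/|\M_{n+1}|)^\ell\cdot p_n(q)=p_n(q)-O(\ell\cdot 2^{-n})$. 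Summing,
\[
|p_{n+1}(q)-p_n(q)|=O\bigl(\ell\cdot 2^{-n}+\ell^2/|\M_n|\bigr),
\]
which is summable since $|\M_n|\ge 2^{n-1}$.

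The main obstacle will be the careful justification of the type-transfer property in (c): one must verify that the duplication of quantifier-free types, applied successively across substages $(n+1,1),\ldots,(n+1,n+1)$, composes to preserve the joint quantifier-free type of tuples whose elements lie in pairwise distinct branches. This rests on the strong amalgamation property of the age of $\M$ together with the explicit realization of each branching step inside $\M$.
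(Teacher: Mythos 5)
Your proposal is correct and follows the same overall strategy as the paper: invoke Corollary~\ref{mainGraphonCor} and prove the sequence of probabilities is Cauchy by comparing stage $n$ with stage $n+1$ via the branching structure. The difference is in the bookkeeping. The paper conditions on the single event $E_{n+1,\ell}$ (no sample lands in $B(n+1,n+1)$) and asserts the exact identity
\[
\Pr\bigl(G_{n+1}\models q(b_0,\ldots,b_{\ell-1})\mid E_{n+1,\ell}\bigr)=\Pr\bigl(G_n\models q(a_0,\ldots,a_{\ell-1})\bigr),
\]
whereas you split off a third case, (b): two samples lie in $\M_{n+1}\setminus B(n+1,n+1)$ but project to the same element of $\M_n$. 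Your extra care is in fact warranted. The branching property as stated preserves the quantifier-free type of a tuple only when the tuple takes at most one offshoot from each branch set; it says nothing about the relation between two distinct offshoots of the same element. So when two $b_i$'s collide under $\rho$ but remain distinct, the type of the $b$'s in $\M_{n+1}$ need not match the (redundant) type of their common projection in $\M_n$, and the paper's displayed identity is not exact. The resulting discrepancy is $O(\ell^2/|\M_n|)$, which is summable, so the paper's conclusion is unharmed, but your explicit case (b), together with the reduction to non-redundant $q$, is what makes the identity in case (c) literally correct. It is worth noting that this is precisely the decomposition the paper itself uses in the proof of Theorem~\ref{maintheorem} (cases (1) and (2) there), so your version of the toy argument is actually closer in spirit to the main construction.

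Two small points to tighten. First, you should use the unbounded sequence assumption (or a lower bound on $|\M_n|$) to argue case (b) is summable; your observation $|\M_n|\ge\alpha_n\ge 2^{n-1}$ handles this, but it is worth making explicit that $\alpha_n=2^{n-1}|B(n,n)|\ge 2^{n-1}$ and that $|\M_n|\ge\alpha_n$ follows from $|B(n,0)|=\alpha_n\cdots\alpha_1|B(0,0)|$. Second, in case (c) the contribution is
\[
\Bigl(\tfrac{|\M_{n+1}\setminus B(n+1,n+1)|}{|\M_{n+1}|}\Bigr)^\ell\cdot
\Pr\bigl(G_n\models q(a_0,\ldots,a_{\ell-1})\text{ and the }a_i\text{ are distinct}\bigr),
\]
which equals $(\cdots)^\ell\cdot p_n(q)$ precisely because $q$ is non-redundant; you have this, but it is the step where the restriction to non-redundant $q$ earns its keep, so it should be said explicitly.
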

\begin{proof}
Note that
$\<\M_i\>_{i\in\Nats}$ is an unbounded sequence of finite graphs.
Hence by Corollary~\ref{mainGraphonCor}, it suffices to show that
\[
\Bigl\<\Pr\bigl(\GG(\Nats, \M_i) \models
q(0, \ldots, \ell-1)\bigr)\Bigr\>_{i\in\Nats}
\]
is Cauchy for every quantifier-free type $q$ in the language of graphs,
where
$\ell$ is the number of free variables of $q$.

Fix such a $q$ and $\ell$.
For each $n\in\Nats$,  define
\[
\delta_{n+1} \defas
\Pr\bigl(\GG(\Nats, \M_n) \models q(0, \ldots, \ell-1)\bigr)
- \Pr\bigl(\GG(\Nats, \M_{n+1}) \models q(0, \ldots, \ell-1)\bigr)
.
\]
We will show that $\delta_{n+1}$
decays exponentially
in $n$ for fixed $\ell$.

Let $G_n$ be a sample from
$\GG(\Nats, \M_n)$, and
let $\<a_i\>_{i\in\Nats}$ be the random sequence of vertices (with
replacement) chosen from $\M_n$ in the course of the sampling procedure.
Likewise, let
$G_{n+1}$ be a sample from
$\GG(\Nats, \M_{n+1})$  with vertex sequence
$\<b_i\>_{i\in\Nats}$. Observe that
\[\Pr\bigl(\GG(\Nats, \M_n) \models q(0, \ldots, \ell-1)\bigr) =
\Pr\bigl(G_n \models q(a_0, \ldots, a_{\ell-1})\bigr) \]
and
\[\Pr\bigl(\GG(\Nats, \M_{n+1}) \models q(0, \ldots, \ell-1)\bigr) =
\Pr\bigl(G_{n+1} \models q(b_0, \ldots, b_{\ell-1})\bigr).\]

Let $E_{n+1, \ell}$ be the event that
for each $i$ such that $0\le i \le \ell-1$,
the projection 
\linebreak $\widetilde{\pi}(b_i) \in \M_n$.
By our construction, the conditional probability
\[
\Pr\bigl(G_{n+1} \models q(b_0, \ldots, b_{\ell-1})\,\bigm |\, E_{n+1, \ell}\bigr)
\]
satisfies
\[
\Pr\bigl(G_{n+1} \models q(b_0, \ldots, b_{\ell-1})\,\bigm |\, E_{n+1, \ell}\bigr)
\ =\
\Pr\bigl(G_{n} \models q(a_0, \ldots, a_{\ell-1})\bigr)
.\]
Therefore $\delta_{n+1} \le 1- \Pr(E_{n+1, \ell})$.

By construction of $\M_{n+1}$, we have
\[
\Pr(E_{n+1, \ell}) = \Bigl(1-\frac{|B(n+1, n+1)|}{|\M_{n+1}|}\Bigr)^\ell
\]
Recall that  at the end of the construction we observed that
\[
\frac{|B(n+1,n+1)|}
{|\M_{n+1}|} \le  2^{-n},
\]
and so $\Pr(E_{n+1, \ell}) \ge (1- 2^{-n})^\ell$.
Using Bernoulli's inequality, we obtain the bound
$\Pr(E_{n+1, \ell}) \ge  1 - \ell \ 2^{-n}$,
and so
$\delta_{n+1} \le \ell \ 2^{-n}$,
as desired.
\end{proof}


Let $\mu_\M$ denote the distribution of the limit of $\bigl\<\GG(\Nats,
\M_i)\bigr\>_{i\in\Nats}$.
Proposition~\ref{SimplifiedInvMeas} demonstrates
that $\mu_\M$ is an
$\sym$-invariant  measure on $\Models_L$, where $L$ is the language of
graphs.
We now show that $\mu_\M$ assigns measure $1$
to the isomorphism class of $\M$. We begin with a combinatorial lemma.

Recall that
for each $j\in\Nats$, we have defined
$\ell_j\in\Nats$ to be one less than the number of free variables in
the quantifier-free formula $\varphi_j$.
For each $n, j\in\Nats$, define
\[\gamma_{n,j} \defas
\Pr\Bigl(
\GG(\Nats, \M_n) \models (\exists y)\, \varphi_j\bigl(0\cdots(\ell_j-1), y\bigr)
\Bigr)
.\]
Before proving our main bound on this quantity, we need a technical lemma.
\begin{lemma}
\label{veryCombLemma}
Let $k\in\Nats$ and suppose $0 < C < 2^k$. Then
\[\prod_{i = k}^\infty (1 -  C \ 2^{-i})  \ge
(1 - C\ 2^{-k})^2
.
\]
\end{lemma}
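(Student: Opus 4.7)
The plan is to isolate the $i = k$ factor and reduce the claim to a tail estimate that is handled by the elementary inequality $(1-a)(1-b) \ge 1 - a - b$ (valid for $a,b \in [0,1]$) together with the fact that $\sum_{j \ge 1} 2^{-j} = 1$.

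First, I would factor
\[
\prod_{i = k}^\infty (1 - C\cdot 2^{-i}) \ =\ (1 - C\cdot 2^{-k})\cdot \prod_{i = k+1}^\infty (1 - C\cdot 2^{-i}).
\]
Dividing through by $(1 - C\cdot 2^{-k})$, which is positive by the hypothesis $C < 2^k$, the desired inequality reduces to showing
\[
\prod_{i = k+1}^\infty (1 - C\cdot 2^{-i}) \ \ge\ 1 - C\cdot 2^{-k}.
\]
After the reindexing $j = i - k$, and setting $y \defas C \cdot 2^{-k} \in (0,1)$, this becomes the claim
\[
\prod_{j=1}^\infty (1 - y\cdot 2^{-j}) \ \ge\ 1 - y.
\]

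Next I would establish the truncated inequality $\prod_{j=1}^n (1 - y\cdot 2^{-j}) \ge 1 - y\sum_{j=1}^n 2^{-j}$ by induction on $n$. The base case is immediate, and the inductive step uses $(1-a)(1-b) = 1 - a - b + ab \ge 1 - a - b$ applied to $a = y\sum_{j=1}^n 2^{-j} \in [0, 1]$ and $b = y \cdot 2^{-(n+1)} \in [0,1]$, noting that the partial product on the left is nonnegative since each factor lies in $(0,1]$. Since $\sum_{j=1}^n 2^{-j} = 1 - 2^{-n} \le 1$, this gives $\prod_{j=1}^n (1 - y \cdot 2^{-j}) \ge 1 - y(1 - 2^{-n}) \ge 1 - y$ for every $n$.

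Finally, I would let $n \to \infty$: each factor $(1 - y\cdot 2^{-j})$ lies in $(0, 1)$, so the partial products form a decreasing sequence bounded below by $1 - y > 0$, hence the infinite product converges and its limit is at least $1 - y$, completing the proof. No step here is a real obstacle; the only thing to be careful about is verifying that all the factors are strictly positive (which is where the hypothesis $C < 2^k$ is used) so that the factoring and the induction are legitimate.
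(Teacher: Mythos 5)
Your proof is correct, and it takes a genuinely different route from the paper. The paper passes to logarithms and invokes concavity of $t \mapsto \log(1-t)$: from the chord estimate $\log(1-t) \ge (t/t_0)\log(1-t_0)$ for $0 < t \le t_0 < 1$, with $t_0 = C\cdot 2^{-k}$ and $t = C\cdot 2^{-i}$, the sum $\sum_{i\ge k} \log(1-C\cdot 2^{-i})$ telescopes to $2\log(1-C\cdot 2^{-k})$, and exponentiation gives the bound. You instead peel off the $i=k$ factor and reduce to the tail estimate $\prod_{j\ge 1}(1 - y\cdot 2^{-j}) \ge 1 - y$ with $y = C\cdot 2^{-k}$, which you prove via the elementary Weierstrass-type bound $\prod_j (1-a_j) \ge 1 - \sum_j a_j$ together with $\sum_{j\ge 1} 2^{-j} = 1$. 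Your route avoids calculus entirely and is arguably more self-contained; the paper's route is slightly more compact because it treats all indices uniformly via a single convexity inequality rather than an induction. Both land on exactly the same constant $(1 - C\cdot 2^{-k})^2$. One small remark: in your inductive step, what you actually need in order to multiply the hypothesis $P_n \ge 1-a$ by $(1-b)$ is that $1-b \ge 0$, not that $P_n \ge 0$; this holds since $b = y\cdot 2^{-(n+1)} < 1$, so the argument goes through, but the justification you give ("the partial product is nonnegative") is not quite the relevant one.
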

\begin{proof}
By our hypothesis on $C$, each term of the product is positive.
In particular, we have
\begin{eqnarray*}
\log \bigl(\prod_{i = k}^\infty (1 -  C \ 2^{-i})  \bigr)
= \sum_{i=k}^\infty \log (1 - C  \ 2^{-i})
\end{eqnarray*}
By the concavity of the function $\log(1-t)$, we have $\log(1-t) \ge t \, \log(1-t_0)/t_0$ for $t_0 \ge t > 0$. Setting $t_0 = C\, 2^{-k}$ and $t = C\, 2^{-i}$ where $i\ge k$, we obtain
\begin{eqnarray*}
\sum_{i=k}^\infty \log (1 - C  \ 2^{-i})
&\ge&
\sum_{i=k}^\infty 
C\, 2^{-i} 
\log (1 - C  \ 2^{-k}) / (C \, 2^{-k})
\\
&=& 
\log (1 - C  \ 2^{-k})\ 
\sum_{i=k}^\infty 
2^{-i+k} \\
&=&
2 \, \log (1 - C  \ 2^{-k}).
\end{eqnarray*}
Therefore  $\prod_{i = k}^\infty (1 -  C \ 2^{-i})  \ge (1 - C\ 2^{-k})^2$ by the monotonicity of $\log$.
\end{proof}

\begin{lemma}
\label{combLemma}
For all $j\in\Nats$,
\[\lim_{n \rightarrow \infty}\gamma_{n,j} =1.\]
\end{lemma}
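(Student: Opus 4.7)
The plan is to exploit the fact that each $\varphi_j$ reappears infinitely often in the enumeration of $T$ (so witnesses for it are added at arbitrarily late stages), and that in $\M_n$ the bulk of elements are branched descendants of ancestors already present in $\M_{m-1}$. Fix $j$ and pick $m \geq j$ arbitrarily large with $\varphi_m = \varphi_j$. For $n \geq m$, let $\a = (a_0, \ldots, a_{\ell_j - 1})$ be the first $\ell_j$ samples in the i.i.d.\ procedure defining $\GG(\Nats, \M_n)$, and define the event $F_{n, m}$ that each $a_i \in \bigcup_{k < m} B(n, k)$; equivalently, $\widetilde{\pi}(a_i) \in \M_{m-1}$ for every $i$.

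First, I would bound $\Pr(F_{n, m})$ from below. Using the recurrences $|\M_n| = |B(n, n)| + \alpha_n \, |\M_{n-1}|$ and $|B(n, k)| = \alpha_n \, |B(n-1, k)|$ for $k < n$, downward induction on $n$ gives $|B(n, k)|/|\M_n| \leq |B(k, k)|/|\M_k|$, which combined with the construction's bound $|B(k, k)|/|\M_k| \leq 2^{-(k-1)}$ yields a per-sample probability of landing in a ``new'' layer of at most $\sum_{k \geq m} 2^{-(k-1)} \leq 2^{-(m-2)}$. Hence $\Pr(F_{n, m}) \geq (1 - 2^{-(m-2)})^{\ell_j}$.

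Next, on $F_{n, m}$ I claim $\a$ has a witness for $\varphi_j$ in $\M_n$. Since $\widetilde{\pi}(\a) \in \M_{m-1}$, the construction at substage $(m, 0)$ (with $\varphi_m = \varphi_j$) supplies some $d \in \M_m$ with $\M_m \models \varphi_j(\widetilde{\pi}(\a), d)$, either as an entry of $\widetilde{\pi}(\a)$ itself or as a freshly added element of $B(m, m)$. The subsequent branching substages produce a descendant $d' \in \M_n$ of $d$, and by duplication of quantifier-free types the tuple $(\a, d')$ in $\M_n$ realizes the same QF type as $(\widetilde{\pi}(\a), d)$ in $\M_m$, so $\M_n \models \varphi_j(\a, d')$. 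Since the remaining samples $a_y$ for $y \geq \ell_j$ are i.i.d.\ uniform on $\M_n$, almost surely some $a_y$ equals $d'$, giving $\GG(\Nats, \M_n) \models (\exists y)\, \varphi_j(0, \ldots, \ell_j - 1, y)$. Hence $\gamma_{n, j} \geq \Pr(F_{n, m}) \geq (1 - 2^{-(m-2)})^{\ell_j}$ for every $n \geq m$, and letting $m \to \infty$ forces $\gamma_{n, j} \to 1$.

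The main technical subtlety is the witness-preservation step: if entries of $\widetilde{\pi}(\a)$ coincide (two $a_i$'s share an ancestor), then the QF types of $\widetilde{\pi}(\a)$ and $\a$ differ on equality, and the duplication principle must be applied carefully so that offshoots of a common ancestor stand in the required relation to a descendant of $d$. I would handle this either by strengthening $F_{n, m}$ to also require distinct projections (which still occurs with probability tending to $1$ in $m$) or by arranging the branching substructures within the ambient $\M$ coherently, so that co-offshoots realize the correct QF interactions dictated by the strong amalgamation property.
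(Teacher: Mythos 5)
Your proof follows the same strategy as the paper's: condition on the first $\ell_j$ samples having projections into an early, stable structure, and then argue that a witness added at an intermediate stage branches up to a witness in $\M_n$. The chief technical difference is the probability estimate: you bound $\Pr(F_{n,m})$ directly by summing the geometric layer masses $|B(n,k)|/|\M_n|\le 2^{-(k-1)}$ for $k\ge m$, whereas the paper conditions layer-by-layer via events $E_{h,\ell_j}$, obtains a telescoping product, and then needs the concavity estimate of Lemma~\ref{veryCombLemma}. Your version is cleaner and avoids that auxiliary lemma. Your device of choosing $m$ large with $\varphi_m=\varphi_j$ plays the role of the paper's $\zeta(n,j)$; both exploit the repetitive enumeration. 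One substantive point: the paper additionally conditions on the event $F_{n,\ell_j}$ that $a_0,\dots,a_{\ell_j-1}$ are \emph{distinct}, which your proposal omits from the main line of argument; this is needed so that the quantifier-free type asserted by $\varphi_j$ about the tuple $(0,\dots,\ell_j-1)$ in $\GG(\Nats,\M_n)$ is actually realized by $(a_0,\dots,a_{\ell_j-1})$ in $\M_n$. Your closing remark correctly identifies the remaining subtlety (coinciding projections $\widetilde{\pi}(a_i)$): note that the paper's $F_{n,\ell_j}$ alone does not rule out distinct samples sharing a $\widetilde{\pi}$-ancestor, so the fix you sketch---strengthening the event to demand distinct projections, which still has probability tending to $1$---is in fact the more careful way to make the witness-preservation step airtight.
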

\begin{proof}
By our enumeration of formulas $\varphi_j$, observe that 
for all $n\in\Nats$ there is some $j\ge 1$ such that
$\gamma_{n, j} = \gamma_{n, 0}$. Hence it suffices to prove the claim for all $j\ge 1$.

We may 
assume that $j$ is large enough that $\ell_j < 2^{j-1}$, as each formula is enumerated infinitely often, and $\gamma_{n,j}$ depends only on $n$ and on $\varphi_j$, not on $j$.

Fix such a $j\ge 1$.
As in the proof of
Proposition~\ref{SimplifiedInvMeas}, for $n\in\Nats$
let $G_n$ be a sample from
$\GG(\Nats, \M_n)$, and
let $\<a_i\>_{i\in\Nats}$ be the random sequence of vertices (with
replacement) chosen from $\M_n$ in the course of the sampling procedure.

Analogously, for $n>j$, define $D_{n, j, \ell_j}$ to be the event that
for each $i$ such that $0\le i \le \ell_j-1$,
the projection 
$\widetilde{\pi}(a_i) \in \M_{j-1}$.
Recall that
$\Pr(E_{h, \ell_j}) \ge 1 -  \ell_j \ 2^{-(h-1)}$ for each $h\in \Nats$,
and so 
\begin{eqnarray*}
\Pr(D_{n, j, \ell_j}) 
&\ge& \Pr(E_{n, \ell_j}) \cdot 
\Pr(E_{n-1, \ell_j}) \cdots
\Pr(E_{j, \ell_j}) \\ 
&\ge& 
(1 -  \ell_j \ 2^{-(n-1)}) \cdot
(1 -  \ell_j \ 2^{-(n-2)}) \cdots
(1 -  \ell_j \ 2^{-(j-1)}). 
\end{eqnarray*}
Taking $C = \ell_j$ and $k = j-1$ in Lemma~\ref{veryCombLemma}, we obtain
\[\prod_{i = j-1}^\infty (1 -  \ell_j \ 2^{-i})  \ge 
(1 - \ell_j\ 2^{-(j-1)})^2,
\]
and so
\[
\Pr(D_{n, j, \ell_j}) 
\ge
(1 - \ell_j\ 2^{-(j-1)})^2,
\]
as each term in the infinite product is between $0$ and $1$.

Now let $F_{n, \ell_j}$ be the event that 
the elements
$a_0, \ldots, a_{\ell_j-1}$ 
of $\M_n$ are distinct. Observe that, because $\<\M_i\>_{i\in\Nats}$
is an unbounded sequence of graphs,
\[
\lim_{n\to \infty} \Pr(F_{n, \ell_j}) = 1.
\]

Because of 
the way
witnesses are chosen at
substage 
$(j, j)$, for $n>j$
if events $D_{n, j, \ell_j}$ and
$F_{n, \ell_j}$ hold, then
\[
\GG(\Nats, \M_n) \models (\exists y) \varphi_j\bigl(0\cdots(\ell_j-1), y\bigr).
\]
Therefore,
\[
\Pr\Bigl(
\GG(\Nats, \M_n) \models (\exists y)\, \varphi_j\bigl(0\cdots(\ell_j-1), y\bigr)
\Bigr)
\ge
\Pr(D_{n, j, \ell_j}) \cdot \Pr(F_{n, \ell_j}).
\]
For $n>j$, define $\zeta(n,j)$ to be the greatest $k<n$ such that $\varphi_k = \varphi_j$.
We then have
\begin{eqnarray*}
\Pr\Bigl(
\GG(\Nats, \M_n) \models (\exists y)\, \varphi_j\bigl(0\cdots(\ell_j-1), y\bigr)
\Bigr)
&\ge&
\Pr(D_{n, \zeta(n,j), \ell_j}) \cdot 
\Pr(F_{n, \ell_j})\\
&\ge&
(1 - \ell_j\ 2^{-(\zeta(n, j)-1)})^2 \cdot
\Pr(F_{n, \ell_j}).
\end{eqnarray*}
Because each formula is enumerated infinitely often, $\lim_{n\to\infty} \zeta(n,j) = \infty$. Hence
\[
\lim_{n\to\infty}
\Pr\Bigl(
\GG(\Nats, \M_n) \models (\exists y)\, \varphi_j\bigl(0\cdots(\ell_j-1), y\bigr)
\Bigr)
= 1,
\]
as desired.
\end{proof}

\begin{proposition}
The $\sym$-invariant measure  $\mu_\M$ is concentrated on the isomorphism class of
$\M$.
\end{proposition}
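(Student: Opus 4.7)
My plan is to establish $\mu_\M(\{G : G \cong \M\}) = 1$ via the following roadmap. Since $\M$ is a \Fr\ limit in a finite relational language, its first-order theory $T$ is $\omega$-categorical, so every countable model of $T$ is isomorphic to $\M$. It therefore suffices to show that $\mu_\M$-a.e.\ graph $G$ satisfies every extension axiom $\psi_j := (\forall \x)(\exists y)\varphi_j(\x, y)$. By $\sym$-invariance of $\mu_\M$ and countable additivity over tuples, and modulo a routine check for tuples with repetitions, this reduces to showing that for each $j$ the value $p_j := \mu_\M(\llrr{(\exists y)\, \varphi_j(0, 1, \ldots, \ell_j-1, y)})$ equals $1$.

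The key step, and what I expect to be the main obstacle, is a uniform witness density bound. For a tuple $\aa$ sampled from $\M_n$ as in the definition of $\GG(\Nats, \M_n)$, I would call $\aa$ \emph{good} if each coordinate projects under $\widetilde{\pi}$ to an element of $\M_{j-1}$ and the projected tuple $\aa^* := (\widetilde\pi(a_0), \ldots, \widetilde\pi(a_{\ell_j-1}))$ has distinct coordinates. The claim is that for every good $\aa$ and every $n \ge j$, the number $W(\aa)$ of vertices of $\M_n$ witnessing $\varphi_j(\aa, \cdot)$ satisfies $W(\aa)/v(\M_n) \ge c_j$ for a constant $c_j > 0$ depending only on $j$. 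The duplication property of the construction guarantees that every offshoot of the distinguished witness $d_{\aa^*} \in B(j, j)$ added at substage $(j, 0)$ is again a witness for $\varphi_j(\aa, \cdot)$, since quantifier-free types are preserved. The number of such offshoots in $\M_n$ is at least $\alpha_{j+1} \cdots \alpha_n$, while iterating the estimate $|B(n,n)|/v(\M_n) \le 2^{-(n-1)}$ from the construction yields $v(\M_n) \le C \cdot |B(0,0)| \cdot \alpha_1 \cdots \alpha_n$ for an absolute constant $C$, because the stage-$i$ contribution $|B(i,i)|\,\alpha_{i+1} \cdots \alpha_n$ to $v(\M_n)$ decays super-exponentially in $i$. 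Taking the ratio gives $c_j = 1/(C \cdot |B(0,0)| \cdot \alpha_1 \cdots \alpha_j) > 0$.

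With the density bound in place, the rest is a quantitative Borel--Cantelli-style estimate. The probability that some $y < N$ witnesses $\varphi_j(0, \ldots, \ell_j-1, y)$ in $\GG(\Nats, \M_n)$ is at least $(1 - (1 - c_j)^{N - \ell_j}) \cdot \Pr(\aa\ \text{good})$, and $\Pr(\aa\ \text{good}) \to 1$ as $n \to \infty$ by the same projection-chain estimates used in the proof of Lemma~\ref{combLemma}. Since $\llrr{\exists y < N : \varphi_j(0, \ldots, \ell_j-1, y)}$ is a cylinder set, its probabilities under $\GG(\Nats, \M_n)$ converge exactly to its $\mu_\M$-measure, yielding $\mu_\M(\llrr{\exists y < N : \varphi_j(0, \ldots, \ell_j-1, y)}) \ge 1 - (1 - c_j)^{N - \ell_j}$. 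Letting $N \to \infty$ and using continuity of measure from below then gives $p_j = 1$, and combining over $j$ via countable additivity yields that $\mu_\M$-a.e.\ graph is a model of $T$ and hence isomorphic to $\M$.
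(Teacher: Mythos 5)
Your high-level plan matches the paper's: show $\mu_\M$ gives full measure to each one-point extension event $\llrr{(\exists y)\varphi_j(0,\ldots,\ell_j-1,y)}$, then use $\sym$-invariance together with $\aleph_0$-categoricity of $T$. Moreover, your approximation of the open (but non-cylinder) event $\llrr{(\exists y)\varphi_j(\ldots)}$ by the clopen cylinder events $\llrr{\exists y<N:\varphi_j(\ldots)}$, combined with the uniform-in-$n$ witness-density bound $c_j>0$ and then $N\to\infty$, is a genuine improvement in rigor over the paper's own write-up, which invokes Lemma~\ref{combLemma} (a statement about $\gamma_{n,j}$ for finite $n$) and passes to $\mu_\M$ without addressing this point. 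So you have correctly identified the main obstacle and isolated the right quantity to control.

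However, the density-bound step has a gap in the claim that $\Pr(\aa\ \text{good})\to 1$. Your notion of ``good'' requires the \emph{primordial} projections $\widetilde\pi(a_0),\ldots,\widetilde\pi(a_{\ell_j-1})$ to be pairwise distinct, and this event does \emph{not} have probability tending to $1$. The map $\widetilde\pi$ sends each vertex of $\M_n$ back to the root of its branching tree in some $B(m,m)$, and a \emph{constant} fraction of $\M_n$ descends from each single root of $B(0,0)$: one computes
\[
\Pr\bigl(\widetilde\pi(a_0)=\widetilde\pi(a_1)\bigr) \;=\;\sum_{p}\Bigl(\frac{\#\{x\in\M_n:\widetilde\pi(x)=p\}}{v(\M_n)}\Bigr)^{2}\;\asymp\;\frac{1}{|B(0,0)|},
\]
which is bounded away from $0$ uniformly in $n$ (and $|B(0,0)|$ is a fixed constant chosen at stage $0$, possibly equal to $1$). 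Consequently your chain yields only $\mu_\M\bigl(\llrr{(\exists y)\varphi_j(\ldots)}\bigr)\ge\lim_n\Pr(\aa\ \text{good})<1$, which is not enough. The assertion that this follows ``by the same projection-chain estimates used in the proof of Lemma~\ref{combLemma}'' is not justified: those estimates control $\Pr(D_{n,j,\ell_j})$ (projections landing in $\M_{j-1}$) and $\Pr(F_{n,\ell_j})$ (samples distinct in $\M_n$), neither of which entails distinctness of the $\widetilde\pi$-images. The paper sidesteps the issue by the re-indexing $\zeta(n,j)$ — the latest $k<n$ with $\varphi_k=\varphi_j$ — so the relevant collision event is for ancestors at the increasingly deep level $\zeta(n,j)-1$, whose probability does tend to $0$ since $v(\M_{\zeta(n,j)-1})\to\infty$. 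Your witness count $\alpha_{j+1}\cdots\alpha_n$ is pinned to the stage-$j$ witness $d_{\aa^*}$ and hence to the primordial projection $\aa^*$, so repairing this requires replacing $\widetilde\pi$ and stage $j$ by ancestors at level $\zeta(n,j)-1$ and the stage-$\zeta(n,j)$ witnesses throughout the density argument.
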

\begin{proof}
For each $j\in\Nats$, we have
\[
\mu_\M\Bigl(
\bigllrr{(\exists y) \varphi_j
\bigl(0\cdots(\ell_j-1), y\bigr)}
\Bigr)
 = 1,
\]
by Lemma~\ref{combLemma}. Therefore, by the $\sym$-invariance of
$\mu_\M$,
we have
\[
\mu_\M\Bigl(
\bigllrr{(\forall\x)(\exists y) \varphi_j
\bigl(\x, y\bigr)}
\Bigr)
 = 1,
\]
where $\x$ is an $\ell$-tuple of distinct variables. But $T$ consists
solely of sentences of the form $(\forall\x)(\exists y) \varphi_j \bigl(\x, y\bigr)$.
Hence $\mu_\M$ is
concentrated on the class of models of $T$.
Because $T$ is $\aleph_0$-categorical and $\M \models T$, the
measure $\mu_\M$ is concentrated on the isomorphism class of $\M$.
\end{proof}


\section{Inverse limit construction}
\label{invlimitconstruction}

We now give the key technical construction of the paper. This will take a theory with
certain properties and produce a probability measure, invariant under
permutations of the non-constant elements in the underlying set,
that is concentrated on the class of models of the theory. This construction is a
variant of the one in \cite{AFP} and will be the crucial tool used in 
later sections.

\subsection{Setup}
Before providing the construction itself, we describe the main conditions
it requires. We
begin by fixing the following languages, theories, and quantifier-free types.

First let
$\<L_i\>_{i\in\Nats}$
be an increasing sequence of countable languages having no function
symbols, but possibly both constant and relation symbols, and
let $L_{\infty} \defas \bigcup_{i \in \Nats}L_i$, so that
\[
L_0 \subseteq
L_1 \subseteq
L_2 \subseteq
\cdots \subseteq  L_\infty
.
\]
Further assume that all constant symbols appearing in any $L_i$ are
already in the language $L_0$; call this set of constant symbols $C$.

Now fix an increasing sequence
$\<T_i\>_{i \in \Nats}$
of
countable pithy $\Pi_2$
theories
that are quantifier-free complete and satisfy
$T_i\in \Lwow(L_i)$ for each $i \in \Nats$.
Let $T_{\infty} \defas \bigcup_{i \in \Nats}T_i$, so that
\[
T_0 \subseteq
T_1 \subseteq
T_2 \subseteq
\cdots \subseteq  T_\infty
.
\]

For each $i\in\Nats$, let $Q_i = \<q_{j}^i\>_{j \in \Nats}$ be 
any
sequence
of complete
non-constant
quantifier-free $L_i$-types 
that are consistent with $T_i$
and that satisfy
the following four conditions. Let
$k^i_j$ denote the number of free variables of $q^i_j$.
\begin{itemize}
\item[\textbf{(W)}] For each $i,j \in\Nats$  and every sentence $(\forall
\x)(\exists y)\psi(\x,y) \in T_i$  for which $|\x| = k^i_j$,
there is some $e_{i, j, \psi} \in \Nats$
such that  $\qn \defas q^i_{e_{i, j, \psi}}$ is a quantifier-free type
with one more free variable than $q^i_j$ and such that
\begin{eqnarray*}
&\models&
(\forall \x, y) \bigl(
\qn(\x,y)\rightarrow q_{j}^i(\x)
\bigr)
\qquad \text{~and} \\
&\models&
(\forall \x, y) \bigl(
\qn(\x,y)\rightarrow \psi(\x,y)
\bigr).
\end{eqnarray*}

\item[\textbf{(D)}] For each $i, j \in \Nats$ and variable $y$
such that  $q^i_j$ is a non-redundant quantifier-free type satisfying
\[
\models
(\forall \x, y) \bigl(
q_j^i(\x,y)\rightarrow \bigwedge_{c \in C} (y \neq c)
\bigr),
\]
where $|\x| + 1 = k^i_j$,
there is some $f_{i, j}$ such that
the quantifier-free type
$\qn \defas q^i_{f_{i, j}}$ has $(k^i_j + 1)$-many
free variables, is non-redundant, and satisfies
\[
\models
(\forall \x, y, z) \bigl(
\qn
(\x,y,z) \rightarrow \bigl(q_{j}^i(\x,y)\And q_{j}^i(\x,z)\bigr)
\bigr).
\]

\item[\textbf{(E)}] For each $i, j \in \Nats$ there is some
$j'\in\Naturals$ such that
\[
\models
(\forall \x) \bigl(
q^{i+1}_{j'}(\x)\rightarrow
q_{j}^{i}(\x)
\bigr),
\]
where $|\x| = k^i_j = k^{i+1}_{j'}$.

\item[\textbf{(C)}] For each $i, j \in \Nats$ and quantifier-free type $p$
such that
\[
\models
(\forall \x, \ww) \bigl(
q^i_j(\x) \to p(\ww)
\bigr),
\]
where $|\x| = k^i_j$ and $\ww$ is a subtuple of variables of $\x$ with $|\ww|$ equal to the number of free variables of $p$,
there is some $h_{p}\in\Nats$ such that
$\qn \defas q^i_{h_{p}}$ satisfies
\[
\models
(\forall \ww) \bigl(
\qn(\ww)
\leftrightarrow  p(\ww)
\bigr).
\]
\end{itemize}

Condition (W) ensures that for each quantifier-free type in $Q_i$ and
pithy $\Pi_2$ 
sentence
in the theory $T_i$, the sequence $Q_i$ contains some
extension of the quantifier-free type that \emph{witnesses} the formula.

Condition (D) requires that for every non-redundant quantifier-free type in
$Q_i$ and every free variable of that quantifier-free type which 
it requires to not be instantiated by
a constant,
there is some other
quantifier-free type in $Q_i$ that \emph{duplicates} that
variable. In particular, by repeated use of (D), we can show that for any
non-redundant $q^i_j \in Q_i$, any $h\in\Naturals$, and any $k^* \le k^i_j$
such that
\[
(\forall \x, \y) \bigl(q^i_j(\x, \y) \to
\bigwedge_{z \in \y}
\bigwedge_{c \in C}
(z \ne c) \bigr)
\]
where $|\x| = k^i_j - k^*$ and $|\y| = k^*$,
there is an
$f^*_{i,j,k^*}\in \Nats$ such that the quantifier-free type
$\qn \defas q^i_{f^*_{i, j, k^*}}$ has $(k_{i,j} + h\,k^*)$-many
free variables, and
for all functions
\[\beta\colon \{1, \ldots, k^*\}\rightarrow \{0, \ldots, h\},\]
we have
\begin{eqnarray*}
\models
(\forall \x, \,
y_1^0 \cdots y_1^h \,\cdots \,y_{k^*}^0 \cdots y_{k^*}^h
)
\ 
 \bigl (
\qn
(\x,
y_1^0 \cdots y_1^h \, \cdots \, y_{k^*}^0 \cdots y_{k^*}^h
)
 \rightarrow
q_{j}^i(\x,
y_1^{\beta(1)} \ldots y_{k^*}^{\beta(k^*)}
)\bigr),
\end{eqnarray*}
where
the $y_\ell^w$ are new distinct variables,
for $1 \leq \ell \leq k^*$ and $0\le w\le h$.

In summary,
$\qn$  is
a quantifier-free type
that duplicates, $(h+1)$-fold, all variables of $q^i_j$.
Furthermore,
for every 
tuple
of variables from $\qn$ 
that contains exactly one duplicate of each
variable of $q^i_j$,
the resulting
restriction of $\qn$ to those variables is
precisely $q^i_j$ with corresponding variables substituted. We call such a $\qn$ an \defn{iterated duplicate}.
Recall our assumption that each $T_i$ is quantifier-free complete, hence
consistent, and that each element of $Q_i$ is consistent with $T_i$.
Therefore, as a consequence of iterated duplication, each $T_i$ must have models
with infinitely many elements that do not instantiate constant symbols.

Condition (E) says that for every quantifier-free type in $Q_i$ and larger language, we can find an
\emph{extension} of that quantifier-free type to that language.

Condition (C) says that the quantifier-free types of $Q_i$ are
\emph{closed} under implication.

There will be one further condition, which we will not always require to hold.
However, when it
does hold, 
it will
guarantee that the
construction assigns measure $0$ to every isomorphism class of models of
the 
target
theory.

\begin{itemize}
\item[\textbf{(S)}] For some $\ell \in\Nats$ (called the \emph{order} of
splitting), every $i \in \Nats$, and
every non-redundant quantifi\-er-free $L_i$-type $q^i_j \in Q_i$ with
$k^i_j \geq \ell$, there is some  $e \in \Nats$ and some quantifier-free
$L_{e}$-type $\qn \in Q_{e}$ with $2k^i_j$
many free variables, such that 
for each $\beta\colon  \{1, \ldots, k^i_j\} \to \{0, 1\}$, we have
\[\models (\forall x^0_1x^1_1 \dots
x^0_{k^i_j}x^1_{k^i_j})\bigl(\qn(x^0_1x^1_1 \dots x^0_{k^i_j}x^1_{k^i_j})
\rightarrow q^i_j(x^{\beta(1)}_1 \dots x^{\beta(k^i_j)}_{k^i_j})\bigr),
\]
where
$x^0_1x^1_1 \cdots x^0_{k^i_j}x^1_{k^i_j}$ is a tuple of distinct free
variables,
and for each $i_1, \ldots, i_\ell \in \Nats$ such that $1\leq i_1 < i_2 <\dots
< i_\ell  \leq k^i_j$, and each
$\gamma_0, \gamma_1 \colon \{1, \ldots, \ell\} \to \{0,1\}$, there are distinct
non-redundant $p_0, p_1 \in Q_j$ such that for $w \in \{0, 1\},$
\[\models (\forall x^0_1x^1_1 \dots
x^0_{k^i_j}x^1_{k^i_j})\bigl(\qn(x^0_1x^1_1 \dots x^0_{k^i_j}x^1_{k^i_j})
\rightarrow p_w(x^{\gamma_w(1)}_{i_1} \dots x^{\gamma_w(\ell)}_{i_\ell})\bigr).
\]
We call $\qn$ a \emph{splitting} of $q^i_j$ of order $\ell$.
\end{itemize}

If there is such an $\ell$ then we say that $\<Q_i\>_{i \in \Nats}$ has
\defn{splitting of quantifier-free types of order $\ell$}.

The intuition is that if (S) is satisfied then for every non-redundant
quantifier-free type in $\bigcup_{i \in \Nats} Q_i$ in at least $\ell$-many free
variables, 
there is some larger language in which we can duplicate the
quantifier-free type so that every quantifier-free subtype with $\ell$-many free variables splits into at least two
distinct quantifier-free types. In other words, for each quantifier-free subtype with
$\ell$-many free variables, if we consider all ways in which it is duplicated
(i.e., all the quantifier-free types where no two distinct free variables are duplicates of
the same variable), then that collection of quantifier-free subtypes always has at least two elements.
We will use this condition to show that 
any given
quantifier-free $\ell$-type
is
realized with probability $0$.

We now turn to the construction itself.

\subsection{Construction}
\label{construction-subsec}

The aim is to construct a continuum-sized measurable space with certain properties.
This will proceed via
the inverse limit of a system of finite structures in an increasing system
of languages with
associated measures.
We will build this system of structures in stages,
each of which will interleave four tasks.
The first task is to enlarge the underlying set and update the
measures so that they assign
mass to the new set in a way that is compatible with our earlier choices.
The second task is 
to add 
elements to ensure that ever more of our pithy $\Pi_2$
theory is realized, and 
adjust
the mass accordingly. The third task is to
make sure the quantifier-free type of the entire structure up until this
point is duplicated. This will ensure that the end result is a continuum-sized
structure. Finally, the fourth task is to ensure that if there is
splitting of quantifier-free types of some order $\ell$, then
the appropriate quantifier-free type splits
as we enlarge the language. This will ensure that
we obtain a continuum-sized structure and a measure such that under a certain sampling procedure,
the probability of any particular quantifier-free type
with $\ell$-many free variables being realized is 0, and hence sampling from
our 
structure will not assign positive measure to the isomorphism class of any single structure.

The construction proceeds in stages indexed by
$n\in \Nats \cup \{\infty\}$.
At each finite stage, the structure we construct will have underlying set equal to
the union of
a fixed countable set 
$C$
of elements that instantiate the constant symbols
with
some finite subset of $\wlw = \bigcup_{i\in\Nats} \Nats^i$.
Recall the infinitary theory
$T_{\infty} = \bigcup_{i \in \Nats}T_i$.
in the language
\linebreak
$L_{\infty} = \bigcup_{i \in \Nats}L_i$.
Fix an enumeration
$\<\varphi_i(\x_i, y)\>_{i\in\Nats}$
of all (quantifier-free) \linebreak $\Lwow(L_\infty)$-formulas
such that
each formula occurs infinitely often, and such that for each $i\in\Nats$,
\[
(\forall \x_i)(\exists y)\varphi_i(\x_i,y) \in T_\infty
\]
and
the formula $\varphi_i$ has precisely $(|\x_i| +1)$-many free variables; let
$\xi_i$ denote 
$|\x_i|$.
Let $\<\ov{a_i}\>_{i \in \Nats}$ be an enumeration with repetition of
finite tuples of elements of $\wlw$
such that for all $i \in\Nats$, we have $|\ov{a_i}| = \xi_i$
and for every $\ov{a}\in (\wlw)^{\xi_i}$, there are infinitely
many $j$ such that $\varphi_j = \varphi_i$ and $\ov{a_j} = \ov{a}$.
Also fix an arbitrary non-degenerate probability measure $m^*$
on $\Nats$, i.e., such that no element has measure $0$.

At the end of each finite Stage $n\in\Nats$, we will have constructed
\begin{itemize}
\item a finite set $X_n \subseteq \Nats^{2n}$ such that $\pi^2(X_n) \supseteq
X_{n-1}$ (when $n\ge 1$),

\item a 
measure $m_n$ on $X_n$,

\item some natural number $\alpha_n >\alpha_{n-1}$ (when $n\ge 1$),

\item the complete non-redundant quantifier-free $L_{\alpha_n}$-type of
$X_n$ (chosen from $Q_{\alpha_n}$), and

\item an $L_{\alpha_n}$-structure $\X_n$.
\end{itemize}
In fact, $X_0$ will be empty and $\alpha_0 = 0$. We will define an
$L_0$-structure
$\X_0$, whose underlying set will be precisely a set of instantiations of the constant symbols
in $L_0$. Call this set of instantiations $C_0$.

For all $n\in\Nats$, the $L_{\alpha_n}$-structure $\X_n$ will have
underlying set $X_n \cup C_0$, and hence is determined by the
quantifier-free $L_{\alpha_n}$-type of $X_n$. We call $X_n$ the
\defn{constantless} part of $\X_n$.

For convenience of various indices, Stage $1$ will not add anything essential to those objects constructed in Stage $0$.

For $n \ge 2$ we will divide Stage $n$ into 
substages $n.i$, indexed by
$i \in \{0, 1, 2, 3\}$, each devoted to a different task:
$n.0$ (adding mass), $n.1$ (adding witnesses), $n.2$ (duplication of quantifier-free types), and $n.3$ (expanding the language).

At the end of Stage $n.i$, for $i\in \{0,1,2\}$, we will have constructed
\begin{itemize}
\item a finite set $X^i_n$,

\item a 
measure $m^i_n$ on $X^i_n$,

\item the complete non-redundant quantifier-free $L_{\alpha_{n-1}}$-type of
$X^i_n$ (chosen from \linebreak $Q_{\alpha_{n-1}}$), and
\item an $L_{\alpha_{n-1}}$-structure $\X^i_n$.
\end{itemize}
As with the major stages, each $L_{\alpha_{n-1}}$-structure $\X^i_n$ will have
underlying set $X^i_n \cup C_0$, and hence 
will be
determined by the
quantifier-free $L_{\alpha_{n-1}}$-type of $X^i_n$. We similarly call $X^i_n$ the
constantless part of $\X^i_n$.
Because each Substage $n.3$ completes Stage $n$, we write $\X_n$, $X_n$,
and $m_n$ rather than $\X^3_n$, $X^3_n$, and $m^3_n$, respectively.

Furthermore, 
the sets
will satisfy
\begin{itemize}
\item $X^0_n \subseteq X^1_n \subseteq \Nats^{2n-2}$,
\item $X^2_n \subseteq \Nats^{2n-1}$ and $\pi(X^2_n) \supseteq X^1_n$, and
\item $X_n \subseteq \Nats^{2n}$ and $\pi(X_n) \supseteq X^2_n$.
\end{itemize}

Finally, at the end of Stage $\infty$, we will have constructed an
$L_\infty$-structure $\X_\infty$ defined by the quantifier-free
$L_\infty$-type of each finite subset of the
\emph{infinite}
constantless part $X_\infty\subseteq \wtow$ of $\X_\infty$, and a probability measure $m_\infty$ on $X_\infty$.
The structure $\X_\infty$ may be viewed as a sort of inverse limit of the
structures $\X_n$ for $0 \le n < \infty$, with elements ``glued together''
in accordance with the projection map $\pi$.

We will also, at the end of each (sub)stage, verify that the new choices
cohere with those made earlier. Specifically,
they will satisfy
the following \emph{existence} and \emph{duplication} properties for every $j\in \Nats$:
\begin{itemize}
\item[($\mathscr{E}$)]
If $\varphi_{j+1}\in\Lwow(L_{\alpha_j})$, then
for every
tuple $\s = s_1, \ldots, s_{|\ov{a_{j+1}}|}$ of (not necessarily distinct) elements from $X_{j}$
such that
$\ov{a_{j+1}} \sqsubseteq \s$,  and every
$\ell_1, \ldots, \ell_{|\ov{a_{j+1}}|} \in \Nats^2$
such that
$s_1\^\ell_1, \ldots, s_{|\ov{a_{j+1}}|}\^\ell_{|\ov{a_{j+1}}|} \in X_{j+1}$,
we have
\[\X_{j+1} \models (\exists y)\varphi_{j+1}(
s_1\^\ell_1, \ldots, s_{|\ov{a_{j+1}}|}\^\ell_{|\ov{a_{j+1}}|} , y).\]

\item[($\mathscr{D}$)] For all $g\in\Nats$, all distinct $s_1, \ldots, s_g \in \Nats^{2j}$,
all $\ell_1, \ldots, \ell_g \in \Nats^2$,
and all quantifier-free  $L_{\alpha_j}$-types $r$
with $g$-many free variables, if $s_1, \ldots, s_g \in X_j$ and
\linebreak
$s_1\^\ell_1, \ldots, s_g\^\ell_g \in X_{j+1}$ then
\[ \X_j \models r(s_1, \ldots, s_g) \]
if and only if
\[\X_{j+1} \models r(s_1\^\ell_1, \ldots, s_g\^\ell_g).\]
\end{itemize}

Furthermore, for any $s \in X_j$, we have
\[m_j(s) =
m_{j+1}\bigl((\pi^2)^{-1}(s)\cap X_{j+1}\bigr)\]
 and
\[
\lim_{i \rightarrow \infty}m_i(X_i) =1.\]  In this sense, mass is preserved
via projection throughout the construction.

We now make the construction precise.

\vspace{10pt}
\noindent \ul{Stage 0:} Defining the mass on $\Nats$ and the
quantifier-free type of the constants.

We begin by defining the constantless part
$X_{0} \defas \emptyset$. Let $\alpha_0 \defas 0$.
Let $m_0$ be the unique measure on $X_0$, i.e., which satisfies $m_0(\emptyset) = 0$.

Choose
an arbitrary
element of $Q_0$ having no free variables.
Because $T_0$ is quantifier-free complete, there is only one such choice of
quantifier-free $L_0$-type (up to equivalence).
This quantifier-free type describes which relations hold of any finite tuple of
elements instantiating constant symbols. In particular, this determines
when two constant symbols must be instantiated by the same element.
Let $\X_0$ be an $L_0$-structure in which $X_0$ has this quantifier-free type, which
amounts to choosing a set of instantiations of the constant symbols,
related in this way. Let $C_0$ denote this set of instantiations,
and let $\cC_0$ be the map that assigns each constant symbol of $L_0$ to its
instantiation in $\X_0$.

\vspace{10pt}
\noindent 
\ul{Stage 1:} Same as stage 0.

Let $X_1 \defas X_0 = \emptyset$, let $\alpha_1 \defas 1$, and let $m_1$ be the unique measure on $X_1$.
Let $\X_1$ be the unique $L_1$-structure whose reduct to $L_0$ is $\X_0$.

\vspace{10pt}
\noindent \ul{Stage $n.0$ (for $1 < n < \infty$):} Adding mass.

Having already determined the $L_{\alpha_{n-1}}$-structure $\X_{n-1}$
and the measure $m_{n-1}$, we now define
an $L_{\alpha_{n-1}}$-structure $\X^0_n$ extending
$\X_{n-1}$,
and the associated measure
$m^0_{n}$.
We will define the structure $\X^0_n$ by choosing its constantless part
$X^0_n \supseteq X_{n-1}$ and the quantifier-free $L_{\alpha_{n-1}}$-type
of $X^0_n$.

This substage adds new elements of $\Nats^{2(n-1)}$ to the support
of $m_{n-1}$
so as to ensure that the eventual measure $m_\infty$ will be a probability measure.

If there is an $x \in X_{n-1}$ with $x = n\^\b$ for some $\b\in
\Nats^{2n-3}$, then let $\X_n^0 \defas \X_{n-1}$ be the same
$L_{\alpha_{n-1}}$-structure,
and let $m_n^0 \defas m_{n-1}$.

Otherwise let $X_n^0 \defas X_{n-1} \cup \{n\^0^{2n-3}\}$ and fix some
ordering on it.
Let \linebreak $q(\x, y) \in Q_{\alpha_{n-1}}$ be a quantifier-free type with $|X_n^0|$-many
free variables such that  if $q^*$ is the quantifier-free type of $X_{n-1}$
(considered as an increasing tuple in the corresponding ordering)
in $\X_{n-1}$,
then
\[\models (\forall \x, y) \bigl(q(\x,y) \rightarrow q^*(\x) \bigr).\]
Note that such a $q$ exists in $Q_{\alpha_{n-1}}$ by condition (D).
Define the quantifier-free \linebreak $L_{\alpha_{n-1}}$-type of $X^0_n$ in $\X^0_n$
(where $X^0_n$ is considered as an increasing tuple in that ordering)
to be $q$.
Finally, let $m_n^0(z) = m_{n-1}(z)$ for all $z\in X_{n-1}$ and
$m_n^0(n\^0^{2n-3}) = m^*(n)$, where $m^*$ is the non-degenerate
probability measure on $\Nats$ that we fixed before the construction.

In summary, at stage $n.0$, if no element of $X_{n-1}$ is a sequence
beginning with $n$,
then we add one such sequence to our set, adjust the measure accordingly,
and define the larger quantifier-free type appropriately.
Note that $\X_{n-1}$ is a substructure of $\X^0_n$, and so the
quantifier-free type of any tuple in $\X_{n-1}$ is the same as its
quantifier-free type in $\X^0_n$. Furthermore, it is clear from the
definition of $m_n^0$
that the measures $m_n^0$ and $m_{n-1}$ agree on elements in the
intersection of their domains.

\vspace{10pt}
\noindent \ul{Stage $n.1$ (for $1 < n < \infty$):} Adding witnesses.

We now extend $\X^0_n$ to $\X^1_n$, in particular defining the quantifier-free
$L_{\alpha_{n-1}}$-type of its constantless part $X^1_n\supseteq X^0_n$ so as to ensure that certain subtuples have witnesses to
appropriate formulas, and define the associated measure $m^1_n$.

Call $\varphi_n(\a_n, y)$ \defn{valid} for Stage $n$ when the following hold:
\begin{itemize}
\item $(\forall \x_n)(\exists y)\varphi_n(\x_n, y) \in \bigcup_{1\le i \leq n-1}T_{\alpha_i}$.

\item At least one tuple $\b$ of elements of $X_{n}^0$ satisfies
$\ov{a_n} \sqsubseteq \b$.

\end{itemize}
If $\varphi_n(\a_n,y)$ is not valid for Stage $n$ then do nothing.
Otherwise let $V$ be the set of all $\b\in X_n^0$ such that
$\ov{a_n} \sqsubseteq \b$ and
\[
\X_n^0 \models
\neg \bigvee_{d \in \b} \varphi_n(\b,d).
\]
 For each $\b \in V$, let $n_\b\in\Nats$ be such that for all
$x \in X_n^0$ we have $n_\b\not \preceq x$. Then let 
$X_n^1 \defas X_n^0\cup
\{n_\b\^0^{2n-3} \st \b \in V\}$.
Fix some ordering of $X^0_n$, and
let $q^*$ be the quantifier-free $L_{\alpha_{n-1}}$-type  of $X^0_n$
(considered as an increasing tuple under this ordering) in $\X^0_n$.
Choose a quantifier-free $L_{\alpha_{n-1}}$-type
$q \in Q_{\alpha_{n-1}}$
such that
if $q$ holds of $X_n^1$ under some ordering, then
\[
q^*(X_n^0) \And  \bigwedge_{\b \in V}\varphi_n(\b,
n_\b\^0^{2n-3})\]
holds, where
$X^0_n$ occurs in its ordering.
Note that
the formula $(\forall \x_n)(\exists y) \varphi_n(\x_n, y)$ is in $T$
and
$q^*$ is consistent with $T$.
Hence by condition (W) we can always find such a $q$.
Declare $q$ to be the quantifier-free $L_{\alpha_{n-1}}$-type of $X^1_n$ in
$\X^1_n$ (under that ordering).
In other words, we require that
either there is a new witness or some witness already existed.

Finally,
let $m_{n}^1$ agree with $m_n^0$ on $X_n^0$ and set $m_n^1(n_{\b}\^0^{2n-3}) \defas
m^*(n_{\b})$ for \linebreak $n_{\b}\^0^{2n-3}\in X_n^1\setminus X_n^0$.

At this substage, we have ensured that if $\varphi_n$ is valid (for stage
$n$) then
there are witnesses in $\X^1_n$ to $(\exists y)\varphi_n(\b, y)$
for all appropriate elements $\b$ of $X^0_n$.
We will use this fact
to verify property ($\mathscr{E}$) at the end of
stage $n$.

Again, $\X^0_{n}$ is a substructure of $\X^1_n$, and so the
quantifier-free type of any tuple in $\X^0_{n}$ is the same as its
quantifier-free type in $\X^1_n$. Likewise, it is clear from the
definition of $m_n^1$
that the measures $m_n^1$ and $m_{n}^0$ agree on elements in the
intersection of their domains.

\vspace{10pt}
\noindent
\ul{Stage $n.2$ (for $1 < n < \infty$):} Duplication of Quantifier-Free
Types.

Having defined $\X^1_n$
in the previous
substage, we now define $\X^2_n$, in which we duplicate the quantifier-free
type of $X^1_n$
in $\X^1_n$.
We will define the structure $\X^2_n$ by choosing its constantless part
$X^2_n \supseteq X^1_{n}$ and the quantifier-free $L_{\alpha_{n-1}}$-type
of $X^2_n$.
We also define the
associated measure $m^2_{n}$.

Let $\Lambda_n\in\Naturals$ be  large enough that if $n$ balls are placed
uniformly independently in $\Lambda_n$-many boxes, the probability of
two or more balls landing in the same box is less than $2^{-n}$.

Define
\[
X^2_n \defas
\bigcup_{1 \le j \leq \Lambda_n}\{x\^ j \st x \in X^1_n\},
\]
and fix an ordering of $X^2_n$.
Fix an ordering $\<x_i\>_{1\le i\le |X^1_n|}$ of the elements of $X^1_n$,
and let 
$q \in Q_{\alpha_{n-1}}$
be the quantifier-free type of this tuple.
Choose a quantifier-free type
$q^* \in Q_{\alpha_{n-1}}$
such that whenever
$q^*$ holds of
$X^2_n$ (under its ordering) and
any subset $\{y_i \st 1\le i\le |X^1_n|\}\subseteq
X^2_n$  satisfies
$\pi(y_i) = x_i$ for all $i$ such that $1\le i\le |X^1_n|$, then
\[
q\bigl(\<y_i\>_{1\le i\le |X^1_n|}\bigr)
\]
holds.
Recall that
our assumption (D) of duplication of quantifier-free types implies the
existence of iterated duplicates.
Hence there is such a $q^*$,
as it is precisely an iterated duplicate of $q$.
Declare $q^*$ to be the quantifier-free type of $X^2_n$
(under its ordering)  in $\X^2_n$.

Suppose that
$g\in\Nats$ and  $s_1, \ldots, s_g \in X_n^1$ are distinct.
Further suppose that
\linebreak
$\ell_1, \ldots, \ell_g \in \Nats$
such that
$s_1\^\ell_1, \ldots, s_g\^\ell_g \in X^2_{n}$.
Then note that for any quantifier-free $L_{\alpha_{n-1}}$-type $r$
with $g$-many free variables
\[ \X^1_n \models r(s_1, \ldots, s_g) \]
if and only if
\[\X^2_{n} \models r(s_1\^\ell_1, \ldots, s_g\^\ell_g).\]
This is the analogue,
for the situation of moving from substage $n.1$ to $n.2$,
of property ($\mathscr{D}$).

Finally, for each $x \in X^2_n$, define $m^2_n(x)  \defas
m^1_n(\pi(x))/{\Lambda_n}$.
In other words, for each $y \in X^1_n$, its mass is divided evenly between
its
$\Lambda_n$-many extensions.

\vspace{10pt}
\noindent
\ul{Stage $n.3$ (for $1 < n < \infty$):} Expanding the Language.

Having defined $\X^2_n$
in the previous
substage, we now define $\X_n$ itself, some $\alpha_n > \alpha_{n-1}$,
and the associated measure $m_n$. We will define by $\X_n$ via its
constantless part $X_n \supseteq X^2_n$ and
the quantifier-free $L_{\alpha_n}$-type of $X_n$.
We do this in a way that ensures that if,
for some $\ell\in\Nats$ such that $\ell \le |X^2_n|$, there is splitting of quantifier-free types of
order $\ell$, then for the least such $\ell$, as we enlarge
the language we split all non-redundant quantifier-free types with $\ell$-many free variables.

Fix some ordering on $X^2_n$ and let $p_{n-1}$ be the
quantifier-free
$L_{\alpha_{n-1}}$-type of $X^2_n$ (considered as an increasing tuple under
that ordering) in $\X^2_n$.

\noindent{\textbf{Case (a):}}
If either there is no splitting of quantifier-free types of order $\ell$ for any
$\ell \in \Nats$, or there is a splitting, but the least such order $\ell$
is greater than $|X^2_n|$,
then let $\alpha_n \defas \alpha_{n-1}+1$, let
$X_n \defas \{x\^0 \st x \in X^2_n\}$
and let 
$p_n \in Q_{\alpha_n}$
be any non-redundant
quantifier-free $L_{\alpha_n}$-type with $|X^2_n|$-many free variables such that
\[
\models (\forall \x)\bigl(p_n(\x) \rightarrow p_{n-1}(\x)\bigr)
\]
where $|\x| = |X^2_n|$. We know that such a quantifier-free type exists by
condition (E). Then declare $p_n$ to be the quantifier-free
$L_{\alpha_n}$-type of $X_n$ (considered as an increasing ordered tuple under the
order induced from $X^2_n$) in $\X_n$. For $x\in X_n$, define $m_n(x)\defas
m^2_n(\pi(x))$, since every element has just one extension.

\noindent{\textbf{Case (b):}}
If, however, there is splitting of some order, i.e., condition (S) holds,
and the least such order
$\ell\in \Nats$ is no greater than $|X^2_n|$, then
let $\qn$ be some splitting of $p_{n-1}$ of order $\ell$. Let
$\alpha_n$ be the $e\in \Nats$ such that $\qn$ is a quantifier-free
$L_e$-type.

Define
\[
X_n \defas
\{x\^ 0\st x \in X^2_n\}
\cup
\{x\^ 1\st x \in X^2_n\}.
\]
and declare that $\qn$ is the quantifier-free $L_{\alpha_n}$-type of $X_n$
in $\X_n$, where $X_n$ is considered as the tuple
\[x_1\^0, x_1\^1, \ldots , x_{|X^2_n|}\^0, x_{|X^2_n|}\^1
\]
 where $x_1, \dots, x_{|X^2_n|}$ is increasing in the chosen order of $X^2_n$.

Finally, for each $x \in X_n$, define $m_n(x)  \defas m^2_n(\pi(x))/2$. In
other words, each element of $X^2_n$ has its mass divided evenly between
its two extensions. This concludes case (b).

\vspace{10pt}
Now, regardless of the case,  we verify property
($\mathscr{D}$) for stage $n$.
Suppose that
$g\in\Nats$ and  $s_1, \ldots, s_g \in X_n^2$ are distinct.
Further suppose that
$\ell_1, \ldots, \ell_g \in \Nats$,
such that
$s_1\^\ell_1, \ldots, s_g\^\ell_g \in X_{n}$.
Then note that for any quantifier-free $L_{\alpha_{n-1}}$-type $r$
with $g$-many free variables
\[ \X^2_n \models r(s_1, \ldots, s_g) \]
if and only if
\[\X_{n} \models r(s_1\^\ell_1, \ldots, s_g\^\ell_g).\]
Note that this property, composed with the analogous property verified at
the end of substage $n.2$, guarantees that
($\mathscr{D}$) holds.

Finally, for $n>0$ note that, by
property ($\mathscr{D}$),
if $\varphi_n\in \Lwow(L_{\alpha_{n-1}})$, then
for every
tuple $s_1, \ldots, s_{|a_{n}|} \in X_{n}$
with $\ov{a_{n}} \sqsubseteq \bigl(\pi^2(s_1), \ldots, \pi^2(s_{|a_{n}|})\bigr)$,
there is  an element $t\in X^1_n$
such that
\[\X^1_{n} \models \varphi_{n}\bigl(
\pi^2(s_1), \ldots, \pi^2(s_{|a_{n}|}) , t\bigr).\]
Hence if $t^* \in (\pi^2)^{-1}(t) \cap X_n$, then
\[\X_{n} \models \varphi_{n}(
s_1, \ldots, s_{|a_{n}|} , t^*).\]
This verifies property ($\mathscr{E}$).


\vspace{10pt}
\ul{Stage $\infty$:} Defining the Limiting Structure.

To complete the construction, we define the $L_\infty$-structure $\X_\infty$ via its
constantless part
$X_\infty$ and
the quanti\-fier-free $L_\infty$-type of every finite subset of $X_\infty$.
We also define the measure $m_\infty$.

Let
\[X_{\infty} \defas \{x\in \wtow \st (\forall i\in\Nats)\, (x|_{i} \in X_i)\},\]
and for each $n\in\Nats$ and each $y \in X_n$ define
\[m_\infty\bigl(\{x\in X_\infty \st x|_n = y\}\bigr) \defas  m_n(y).\]

Consider $X_\infty$ endowed with the topology inherited as a subspace of
$\wtow$ (itself under the product topology of $\Nats$ as a discrete set).
Then $X_\infty$ is the countable disjoint union $\bigcup_{\ell\in\Nats}
Y_\ell$,
where for each $\ell \in \Nats$,
\[
Y_\ell \defas
\{ \ell\^ a \st a \in \wtow \text{~and~} \ell \^ a \in X_\infty \}
\]
is a compact topological space having a basis of clopen sets,
under the topology inherited as a subspace of $\wtow$.
Hence $m_\infty$ can be extended in a unique way to a
countably additive measure on $X_\infty$.


For every $j, n \in\Nats$ and every
$s_1, \dots, s_j \in X_\infty$, there are some $n'\ge n$ and \linebreak
$t_1, \ldots t_j \in X_{n'}$ such that  for all distinct $i,i' \le j$,
\begin{itemize}
\item $t_i = s_i|_{2 n'}$ and
\item $t_i \ne t_{i'}$.
\end{itemize}
Let 
$q \in Q_{\alpha_n}$
be the quantifier-free $L_{\alpha_n}$-type such that
\[
\X_n \models q(t_1, \ldots, t_j).
\]
Then declare that
\[
\X_\infty\models q(s_1, \dots, s_j)
\]
holds.
This choice of quantifier-free type is well-defined because of property
($\mathscr{D}$) at all earlier stages.
This ends the construction.


\subsection{Invariant measures via the construction}

We now verify properties of $\X_\infty$ and $m_\infty$ that will allow us to produce the desired invariant measure.

\begin{proposition}
\label{atomless-nondegenerate}
The measure $m_\infty$ on
$X_\infty$
is a
non-degenerate atomless
probability measure.
\end{proposition}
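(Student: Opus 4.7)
The plan is to verify three things separately: (i) $m_\infty(X_\infty) = 1$, so that $m_\infty$ is a probability measure; (ii) every non-empty open subset of $X_\infty$ has positive $m_\infty$-measure; and (iii) $m_\infty(\{x\}) = 0$ for every $x \in X_\infty$. The construction already guarantees that $m_\infty$ extends uniquely from the cylinder sets to a countably additive measure (via the decomposition of $X_\infty$ as a countable union of compact spaces with bases of clopen sets), so throughout I need only verify the claims on such cylinders and pass to the limit.

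For (i), I would track how mass accumulates through the stages. Mass is added only at substages $n.0$ and $n.1$: at substage $n.0$, either no new element is added (because some element already starts with $n$), or a single new element of the form $n\^0^{2n-3}$ is added with mass $m^*(n)$; at substage $n.1$, new elements $n_\b\^0^{2n-3}$ are added with masses $m^*(n_\b)$, where the indices $n_\b$ are chosen distinct from each other and from all first coordinates already in use. At substage $n.2$, each element $y \in X^1_n$ is split into $\Lambda_n$ extensions, each receiving mass $m^1_n(y)/\Lambda_n$, so the total mass is preserved. Substage $n.3$ either leaves each element alone (Case (a)) or splits each element into two with the mass divided equally (Case (b)), again preserving total mass. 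The crucial observation is that the set $K$ of natural numbers that appear as first coordinates of elements across the entire construction is all of $\Nats$: by substage $n.0$, some element starting with $n$ is present, so $n \in K$. Since the distinct indices $n_\b$ contribute exactly $m^*(n_\b)$ each, and each $k \in \Nats$ is assigned mass $m^*(k)$ exactly once over the course of the construction, the total mass is $\sum_{k \in \Nats} m^*(k) = 1$.

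For (ii), the topology on $X_\infty$ inherited from $\wtow$ has as a basis the cylinders $\{x \in X_\infty \st x|_{2n} = y\}$ for $y \in X_n$; each such cylinder has $m_\infty$-measure $m_n(y)$. Since every non-empty open set in $X_\infty$ contains such a non-empty basic cylinder, it suffices to show $m_n(y) > 0$ for all $y \in X_n$ and $n \in \Nats$. This follows by induction on $n$: $m^*$ is non-degenerate on $\Nats$, so every mass assignment at substages $n.0$ and $n.1$ is strictly positive; the renormalizations at substages $n.2$ (division by the positive integer $\Lambda_n$) and $n.3$ (division by $1$ or $2$) preserve positivity.

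For (iii), fix $x \in X_\infty$ and observe that $\{x\} = \bigcap_{n\in\Nats} C_n$, where $C_n \defas \{y \in X_\infty \st y|_{2n} = x|_{2n}\}$, and the $C_n$ are nested decreasing cylinders of measure $m_n(x|_{2n})$. By continuity of measure, $m_\infty(\{x\}) = \lim_{n\to\infty} m_n(x|_{2n})$. At every substage $n.2$ the mass is divided by $\Lambda_n \ge 2$, while at the other substages the mass at an element is merely preserved (or the element is simply replaced by one of its extensions carrying the divided mass). Consequently $m_n(x|_{2n}) \le m_{n-1}(x|_{2(n-1)})/2$, and iterating gives $m_n(x|_{2n}) \le 2^{-(n-1)}\, m_1(x|_2) \to 0$. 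The main obstacle is the accounting in (i): one must check carefully that the adjustments at substages $n.2$ and $n.3$ neither create nor destroy mass, and that every $k \in \Nats$ is the first coordinate of a newly-minted element exactly once, so that the total mass converges precisely (rather than merely being bounded by) $1$.
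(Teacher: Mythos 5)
Your proof is correct and follows the same three-part plan as the paper: total mass sums to $1$ because each $k \in \Nats$ contributes $m^*(k)$ exactly once and duplication/language stages conserve mass; non-degeneracy is immediate from positivity of the $m_n$ on cylinders; and atomlessness follows from the factor-of-$\Lambda_n$ (hence at least factor-of-$2$) decay at each substage $n.2$.

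One small point of difference that is actually in your favor: for atomlessness, the paper introduces $\Gamma_n \defas \max\{m_n(a) : a \in X_n\}$ and asserts $\Gamma_n \le \Gamma_{n-1}/2$. Read literally, this global bound is not quite immediate, because $X_n^1$ may contain elements newly introduced at substages $n.0$ and $n.1$ carrying mass $m^*(k)$ for some fresh $k$, which need not be bounded by $\Gamma_{n-1}$. Your argument instead tracks a fixed path $x \in X_\infty$ and uses only that $x|_{2(n-1)} \in X_{n-1}$, so $\pi^2(x|_{2n})$ is an \emph{inherited} element and its mass genuinely halves (in fact, drops by at least $\Lambda_n$). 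That is exactly the inequality needed for $m_\infty(\{x\}) \le m_n(x|_{2n}) \to 0$, and it sidesteps the global-maximum subtlety cleanly. (Minor index quibble: since $X_0 = X_1 = \emptyset$, the iteration should be anchored at $n = 2$, giving $m_n(x|_{2n}) \le 2^{-(n-2)} m_2(x|_4) \le 2^{-(n-2)}$; the conclusion is unchanged.)
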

\begin{proof}
The measures $m_n$ for $n\in \Nats$ cohere under projection
and agree with $m^*$, in the sense that 
\[
m_\infty (Y_n)
= m^*(n).
\]
But $m^*$ is a probability measure, and so
$m_\infty$ is as well.

For $n\in\Nats$ and $a\in X_n$, let
\[B_a \defas \{ s \in X_\infty \st s|_n = a \}.\]
The collection of sets of the form $B_a$
form a basis for the
topological space $X_\infty$.
Furthermore, for all $n\in\Nats$ and $a \in X_n$,
\[ m_\infty(B_a) = m_n(a) > 0.\]
Hence $m_\infty$ is non-degenerate.

For each $n\in\Nats$, define $\Gamma_n \defas \max \, \{m_n(a)\st a \in X_n\}$;
in substage $n.2$, we duplicate every element of $X_{n-1}$, and so
\[ \Gamma_{n} \le \Gamma_{n-1}/2.\]
Consider a singleton $\{ b \} \subseteq X_\infty$. Then
\[m_\infty(\{b\})
\le m_\infty(B_{b|_n})
\le \Gamma_n\] for each $n\in\Nats$, and so
$m_\infty(\{b\}) = 0$.
Hence $m_\infty$ is atomless.
\end{proof}

We will show that  $\X_\infty$ is an uncountable Borel model such that when
we sample countably infinitely many elements from $X_\infty$ 
independently according to the
probability measure $m_\infty$, the induced substructure
is almost surely a model of $T_\infty$.

\begin{proposition}
The structure $\X_\infty$ is a Borel $L_{\infty}$-structure.
\end{proposition}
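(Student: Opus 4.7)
The plan is to verify the two ingredients of being a Borel structure: first, that the underlying set $X_\infty \cup C_0$ is a standard Borel space, and second, that each symbol of $L_\infty$ has a Borel interpretation.

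For the underlying set, I will endow $C_0$ with the discrete Borel structure and $X_\infty$ with the topology inherited from $\wtow$ (the latter carrying the product topology of the discrete space $\Nats$). As noted just after the construction, $X_\infty$ is the disjoint union $\bigcup_{\ell\in\Nats} Y_\ell$ of compact, totally disconnected sets, so it is a Polish subspace of $\wtow$; hence $X_\infty \cup C_0$, as the disjoint union of two standard Borel spaces, is standard Borel. Constants present no issue: each constant symbol of $L_\infty$ already lies in $L_0$ and is interpreted as the single point $\cC_0(c) \in C_0$. There are no function symbols by hypothesis, so the only real work concerns relation symbols.

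Fix an arbitrary relation symbol $R \in L_\infty$ of arity $k$, and choose $n$ with $R \in L_{\alpha_n}$. I will show that the set $R^{\X_\infty}\subseteq (X_\infty\cup C_0)^k$ is Borel by partitioning $(X_\infty\cup C_0)^k$ into countably many Borel pieces, one for each choice of which coordinates lie in $C_0$ (together with the specific constant occupying each such coordinate) and which equality pattern the $X_\infty$-coordinates satisfy. On a given piece, after substituting the named constants and replacing identified variables, the question reduces to whether $R'(s_{i_1},\dots,s_{i_m})$ holds in $\X_\infty$ for a fixed atomic $L_{\alpha_n}$-formula $R'$ applied to \emph{distinct} elements $s_{i_1},\dots,s_{i_m}\in X_\infty$.

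For this reduced question I will use property ($\mathscr{D}$) together with the definition of the quantifier-free type of a distinct tuple at Stage $\infty$: for each $n'\ge n$ let
\[
B_{n'} \defas \bigl\{(s_{i_1},\dots,s_{i_m})\in X_\infty^m \st s_{i_1}|_{2n'},\dots,s_{i_m}|_{2n'} \text{ distinct and } \X_{n'}\models R'(s_{i_1}|_{2n'},\dots,s_{i_m}|_{2n'})\bigr\}.
\]
Each $B_{n'}$ is a finite union of basic clopen cylinders in $X_\infty^m$, hence clopen; by ($\mathscr{D}$) the sequence is nondecreasing, and by the Stage $\infty$ clause the interpretation of $R'$ on distinct tuples equals $\bigcup_{n'\ge n} B_{n'}$, which is open, hence Borel. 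Pulling back across the countably many pieces and re-assembling, $R^{\X_\infty}$ is Borel. Since $R$ was arbitrary, $\X_\infty$ is a Borel $L_\infty$-structure. The main obstacle is purely bookkeeping: properly handling tuples in which some coordinates coincide or land in $C_0$, so that the cylinder-set description of $R^{\X_\infty}$ is clearly Borel on each piece of the partition; the substance of the argument is a direct application of ($\mathscr{D}$).
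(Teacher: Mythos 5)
Your proof is correct and follows essentially the same route as the paper's: represent the interpretation of each relation/quantifier-free formula as a countable union over stages $n'$ of clopen cylinder conditions, using property ($\mathscr{D}$) and the Stage~$\infty$ definition of the quantifier-free type of a tuple to justify that the union recovers the interpretation. The only differences are bookkeeping: you pre-partition by which coordinates lie in $C_0$ and by the equality pattern of the $X_\infty$-coordinates, whereas the paper ignores $C_0$ and folds the equality pattern into an auxiliary set $I_{n'}$ requiring that pattern to have stabilized by stage $n'$; these are equivalent devices, and if anything your handling of coincident coordinates and constants is slightly more careful than the paper's.
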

\begin{proof}
Fix $n\in\Nats$.
Let $\psi$ be a quantifier-free $L_{\alpha_n}$-formula, and let $\ell$ be the
number of free variables of $\psi$. Then define the set of its
instantiating tuples:
\[
\Psi \defas
\{ a_1 \cdots a_\ell \in X_\infty \st
\X_\infty \models \psi(a_1, \ldots, a_\ell)
\}.
\]
Also define, for each $n'\ge n$,
\[
P_{n'}
\defas
\bigl\{ a_1 \cdots a_\ell \in X_\infty \st \X_{n'}\models \psi(a_1|_{2n'}, \ldots, a_\ell|_{2n'}) \bigr\}
\]
and
\[
I_{n'}
\defas
\bigl\{ a_1 \cdots a_\ell \in X_\infty \st
a_i = a_j \text{~iff~} a_i|_{2n'} = a_j|_{2n'},
\text{~whenever~} {1\le i \le j \le \ell}
\bigr\}
.
\]
Note that for each $n' \ge n$, both $P_{n'}$ and $I_{n'}$ are open sets.
We then have
\[
\Psi = \bigcup_{n'\ge n}  \bigl( P_{n'} \cap I_{n'}\bigr),
\]
and so $\Psi$ is an open set. As $\psi$ was arbitrary, $\X_\infty$ is a
Borel $L_\infty$-structure.
\end{proof}

A natural procedure for sampling 
substructures of $\X_\infty$
using $m_\infty$ will yield the desired
invariant measure.

Because $T_0$ is quantifier-free complete, all models of
$T_\infty$ have the same number of elements that instantiate constant
symbols, and the theory of equality between constants is fixed (as encoded
in $\cC_0$).

Let $\mu$ be an arbitrary atomless probability measure on $\X_\infty$.
We begin by describing a sampling procedure
that uses $\mu$
to determine an invariant measure $\mu^\circ$ on $\Models_{\cC_0, L_\infty}$:
First sample a countably infinite sequence of elements
$\<x_i\>_{i\in \Nats}$ from $X_\infty$ independently according to $\mu$.
If
there exist distinct $i, j\in \Nats$
such that $x_i = x_j$, then declare that all
atomic relations hold among all tuples; however, this occurs with
probability $0$, as $\mu$ is atomless.
Otherwise, for each quantifier-free
$L_\infty$-formula $\psi$, declare that
\[
\psi(n_1, \ldots, n_\ell)
\]
holds if and only if
\[
\X_\infty \models \psi(x_{n_1}, \ldots, x_{n_\ell})
\]
for all $n_1, \ldots, n_\ell \in \Nats$,
where $\ell$ is the number of free variables of $\psi$.
The
distribution of this random $L_\infty$-structure is a probability
measure on $\Models_{\cC_0, L_\infty}$; this is our desired $\mu^\circ$.
(As with the measures described via sampling in 
Section~\ref{simplified}, such probability measures are ergodic, as Kallenberg 
showed by extending the argument of Aldous \cite[Lemma~7.35]{MR2161313} to languages of unbounded arity in \cite[Lemma~7.22]{MR2161313} and \cite[Lemma~7.28 (iii)]{MR2161313}.)
Note that $\mu^\circ$ is
$\sym^{C_0}$-invariant, as $\<x_i\>_{i\in \Nats}$ is i.i.d.
Because $\mu$ is atomless, $\mu^\circ$ is concentrated on the class of structures with
underlying set $\Nats \cup C_0$ that are isomorphic to countably infinite 
substructures  of $\X_\infty$.

\begin{proposition}
\label{asModel}
The $\sym^{C_0}$-invariant probability measure $m_\infty^\circ$ on
$\Models_{\cC_0, L_\infty}$ is concentrated on the class of models of $T_\infty$.
\end{proposition}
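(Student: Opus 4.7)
The plan is to reduce the claim to a verification that, for each pithy $\Pi_2$ sentence $(\forall \x)(\exists y)\varphi(\x, y) \in T_\infty$ and each fixed tuple $\aa$ of distinct elements from $\Nats \cup C_0$ of the appropriate length, the sampled structure almost surely satisfies $(\exists y)\varphi(\aa, y)$. This reduction is automatic: $T_\infty$ is countable, and by the $\sym^{C_0}$-invariance of $m_\infty^\circ$ together with countable additivity over the countably many tuples $\aa$, failure of the universal statement on a positive-measure set of structures would force failure of the existential for some fixed $\aa$ on a positive-measure set. Positions of $\aa$ lying in $C_0$ are simply kept fixed throughout, so the only substantive case is $\aa = (n_1, \ldots, n_k) \in \Nats^k$ with distinct entries.

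Fix such a tuple and let $\<x_i\>_{i\in\Nats}$ be the i.i.d.\ draw from $m_\infty$ underlying the sampled structure. By Proposition~\ref{atomless-nondegenerate} the measure $m_\infty$ is atomless and non-degenerate, so the $x_i$ are almost surely pairwise distinct and distinct from the fixed elements of $C_0$; work inside this co-null event. Choose $n_0$ large enough that $x_{n_1}|_{2n_0}, \ldots, x_{n_k}|_{2n_0}$ are already pairwise distinct in $X_{n_0}$. Because each pair $(\varphi_i, \ov{a_i})$ occurs infinitely often in the enumeration and $\bigcup_j L_{\alpha_j} = L_\infty$, one can pick $j \geq n_0$ with $\varphi_{j+1} = \varphi \in \Lwow(L_{\alpha_j})$ and $\ov{a_{j+1}}$ componentwise an initial segment of $(x_{n_1}|_{2j}, \ldots, x_{n_k}|_{2j})$; taking $\ov{a_{j+1}}$ to be the all-empty tuple of length $k$ works unconditionally. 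Property $(\mathscr{E})$ then produces some $y^\natural$ in the underlying set $X_{j+1} \cup C_0$ of $\X_{j+1}$ with
\[
\X_{j+1} \models \varphi\bigl(x_{n_1}|_{2(j+1)},\, \ldots,\, x_{n_k}|_{2(j+1)},\, y^\natural\bigr).
\]

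It remains to promote $y^\natural$ to a witness in the sampled structure. If $y^\natural \in C_0$ it is already a named element present in every structure in $\Models_{\cC_0, L_\infty}$, so I only have to transport satisfaction from $\X_{j+1}$ up to $\X_\infty$; iterated application of $(\mathscr{D})$ to the projections, which remain pairwise distinct at every stage past $n_0$, preserves the relevant quantifier-free type, and the definition of $\X_\infty$ via the quantifier-free types of pairwise distinct finite subtuples then yields $\X_\infty \models \varphi(x_{n_1}, \ldots, x_{n_k}, y^\natural)$, which the sampling procedure translates into the corresponding constant symbol witnessing $(\exists y)\varphi(n_1, \ldots, n_k, y)$ in the random structure. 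If instead $y^\natural \in X_{j+1}$, then the cylinder $B_{y^\natural} \defas \{z \in X_\infty \st z|_{2(j+1)} = y^\natural\}$ has $m_\infty$-mass $m_{j+1}(y^\natural) > 0$ by construction, so the second Borel--Cantelli lemma applied to the i.i.d.\ sample almost surely yields some $m \notin \{n_1, \ldots, n_k\}$ with $x_m \in B_{y^\natural}$; the same $(\mathscr{D})$-plus-$\X_\infty$ argument then gives $\X_\infty \models \varphi(x_{n_1}, \ldots, x_{n_k}, x_m)$, and $m$ witnesses the existential in the sampled structure.

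The principal obstacle is bookkeeping: one must coordinate the fixed enumeration $\<(\varphi_i, \ov{a_i})\>$ with the random projections $x_{n_i}|_{2j}$ of the sample so that $(\mathscr{E})$ fires at an appropriate stage, and one must track distinctness carefully so that $(\mathscr{D})$ is invoked only on tuples of pairwise distinct elements. Atomlessness of $m_\infty$ discards the null sets on which distinctness fails, while the freedom to let $\ov{a_{j+1}}$ be the all-empty tuple, which is componentwise $\preceq$ every tuple, neatly decouples the choice of enumeration index $j$ from the random trajectory.
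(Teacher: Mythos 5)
Your proof follows essentially the same route as the paper's: reduce by countable additivity and $\sym^{C_0}$-invariance to showing that, for a fixed pithy $\Pi_2$ axiom $(\forall\x)(\exists y)\varphi$ and a fixed tuple of indices, the existential is witnessed almost surely; use the repetitive enumeration together with property $(\mathscr{E})$ to locate a witness $y^\natural$ at a finite stage $\X_{j+1}$; and then, when $y^\natural \in X_{j+1}$, appeal to the positive $m_\infty$-mass of the cylinder $B_{y^\natural}$, Borel--Cantelli, and iterated $(\mathscr{D})$ to transport satisfaction to $\X_\infty$ and thence to the sampled structure. This matches the paper's argument, which locates $e \in X^2_j$ with $\X^2_j \models \psi(\ldots, e)$, notes $m_\infty(B_e) > 0$, and concludes that some sampled $x_h$ lands in $B_e$ almost surely.

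One sub-case is not covered as written (and the paper is equally terse about it): the witness $y^\natural$ supplied by $(\mathscr{E})$ may coincide with one of the projections $s_i \defas x_{n_i}|_{2(j+1)}$, since stage $(j+1).1$ introduces a fresh witness only for tuples $\b$ that possess none already. When $y^\natural = s_i$, the $(k+1)$-tuple $(s_1, \ldots, s_k, y^\natural)$ is not distinct, so $(\mathscr{D})$ cannot be iterated on it; a fresh $x_m \in B_{y^\natural}$ branches away from $x_{n_i}$ at some level past $2(j+1)$, and the duplication at that level only controls quantifier-free types of transversals of the fibers, not of a pair drawn from a single fiber, so $\X_\infty \models \varphi(x_{n_1}, \ldots, x_{n_k}, x_m)$ does not follow from the cited properties. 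The repair is short: when $y^\natural = s_i$, the quantifier-free type of the $k$-tuple $(s_1, \ldots, s_k)$ already implies $\varphi$ with its last argument set equal to the $i$-th; $(\mathscr{D})$ does preserve that $k$-type, and so $n_i$ itself witnesses the existential in the sampled structure, with no Borel--Cantelli required. Finally, your restriction to distinct $\aa$ is unnecessary (and, taken literally, leaves repeated-index tuples unaddressed); $(\mathscr{E})$ is deliberately stated for not-necessarily-distinct tuples precisely so that repeats can be fed in directly, with $(\mathscr{D})$ then applied to the underlying set of distinct entries.
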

\begin{proof}
By Proposition~\ref{atomless-nondegenerate},
the measure
$m_\infty$ is atomless, and so $m_\infty^\circ$ is an
$\sym^{C_0}$-invariant probability measure on
$\Models_{\cC_0, L_\infty}$ 
that is
concentrated on the class of countably infinite
substructures of $\X_\infty$.

Now let $\M$ be a sample from $m_\infty^\circ$, say via the
$m_\infty$-i.i.d.\ sequence $\<x_i\>_{i\in\Nats}$ of elements of $X_\infty$.
Fix an arbitrary $\eta \in T_\infty$. We will show that $\M \models \eta$
almost surely.  Because $\eta$ is pithy $\Pi_2$, we may write it in the
form $(\forall \z)(\exists y) \psi(\z, y)$ for some quantifier-free
$L_\infty$-formula  $\psi$. Let $\ell = |\z|$ and let $n\in\Nats$ be such
that $\psi \in L_{\alpha_n}$.
Fix an arbitrary tuple $\b \defas b_1 \cdots b_\ell \in \Nats$. We must show that there is
some $d\in \Nats$ such that
\[
\M \models \psi(\b, d) \quad \text{a.s.}
\]

Let $\overline{b^*}$ be the random tuple $x_{b_1}\cdots x_{b_\ell}$.
Let $j>n$ be any index of $\psi$ (i.e., such that $\psi = \varphi_j$)
satisfying $\overline{a_j}\sqsubseteq \overline{b^*}$. This is possible because of our
choice of repetitive enumeration.

By our construction in stage $j.2$, there is some  $e\in X^2_{j}$ such that
\[\X^2_{j}\models \psi(x_{b_1}|_{2j-2}  \cdots x_{b_\ell}|_{2j-2}, e) \quad
\text{a.s.}
\]
As in the proof of  Proposition~\ref{atomless-nondegenerate}, let
\[
B_e \defas \{ s \in X_\infty \st s|_{2j-2} = e \}.
\]
By our construction,  for any $e^* \in B_e$,
\[\X_{\infty}\models \psi(x_{b_1}  \cdots x_{b_\ell}, e^*)\quad \text{a.s.}
\]
However, $m_\infty(B_e)>0$,  and so
there is some $h\in\Nats$ such that
$x_h \in B_e \cap \M$, almost surely.
Hence
\[
\M \models \psi(\b, h) \quad \text{a.s.}
\]

Again by Proposition~\ref{atomless-nondegenerate}, 
the measure
$m_\infty$ is non-degenerate.
\end{proof}

We now show that if
the collection of quantifier-free types has splitting of
some order, 
the resulting construction
assigns measure $0$ to any particular isomorphism class of models of the
theory $T_\infty$.

\begin{theorem}
\label{maintheorem}
Suppose that $\<Q_i\>_{i\in \Nats}$ has splitting of some order. Then
there is an $\sym^{C_0}$-invariant probability measure on
$\Models_{\cC_0, L}$
that is concentrated on the class of models of
$T_\infty$ and is such that no single isomorphism class has positive measure.
\end{theorem}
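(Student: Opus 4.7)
The plan is to apply the main construction of \S\ref{construction-subsec} to the given sequences $\<L_i\>$, $\<T_i\>$, $\<Q_i\>$, which now satisfy (S) in addition to (W), (D), (E), (C). This produces the inverse-limit structure $\X_\infty$, the atomless non-degenerate probability measure $m_\infty$ on $X_\infty$ (Proposition~\ref{atomless-nondegenerate}), and the $\sym^{C_0}$-invariant sampling measure $m_\infty^\circ$ on $\Models_{\cC_0, L_\infty}$, which is concentrated on the class of models of $T_\infty$ by Proposition~\ref{asModel}. It therefore remains to show that $m_\infty^\circ(\mathrm{Iso}(\cM)) = 0$ for every countable $\cM \models T_\infty$ with underlying set $\Nats \cup C_0$.

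The heart of the argument will be the following claim: if $\ell$ is the order of splitting furnished by (S), then for every non-redundant complete quantifier-free $L_\infty$-type $q$ with exactly $\ell$ free variables,
\[
m_\infty^\circ\bigl(\llrrsub{q(0, 1, \ldots, \ell-1)}{\cC_0}\bigr) = 0.
\]
To prove this I will analyze an $m_\infty$-i.i.d.\ sample $\<x_i\>_{i\in\Nats}$, whose entries are almost surely distinct and instantiate no constant symbol. For each $n$ large enough that Case~(b) of Substage~$n.3$ applies, the extension from $X^2_n$ to $X_n$ assigns mass $m^2_n(x)/2$ to each of $x\^0$ and $x\^1$, so conditional on the projections of $(x_0, \ldots, x_{\ell-1})$ to $X^2_n$ being distinct, the bits appended in passing to $X_n$ are mutually independent and uniform on $\{0,1\}$. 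The decisive use of condition~(S) is that the splitting $\qn$ of $p_{n-1}$ chosen in Case~(b) ensures that the $2^\ell$ possible bit-assignments on the chosen $\ell$-subtuple yield $2^\ell$ pairwise distinct quantifier-free $L_{\alpha_n}$-types. Hence the conditional probability of matching $q|_{L_{\alpha_n}}$ given a match at the previous splitting stage is at most $2^{-\ell}$, and iterating over infinitely many such stages drives the overall probability to~$0$.

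The main obstacle will be the bookkeeping needed to pass from condition~(S) — which is phrased in terms of a splitting $\qn$ of the full type $p_{n-1}$ of $X^2_n$ — to the required per-stage factor of $2^{-\ell}$ for an arbitrary fixed $\ell$-subtuple. Fortunately, condition~(S) is designed for exactly this: it quantifies over all choices of $\ell$ indices $i_1 < \cdots < i_\ell$ and over all pairs of extension patterns $\gamma_0 \ne \gamma_1$, so together with properties~($\mathscr{D}$) and~($\mathscr{E}$) and the product structure of $m_\infty$ it yields the per-stage estimate. Granting the claim, the theorem follows from the ergodicity of $m_\infty^\circ$ under the $\sym^{C_0}$-action (noted after Proposition~\ref{asModel}): any isomorphism class of positive measure would have measure~$1$, but a single countable $\cM$ realizes only countably many non-redundant complete quantifier-free $L_\infty$-types on $\ell$-tuples, and each gives measure~$0$ to $\llrrsub{q(0,\ldots,\ell-1)}{\cC_0}$ by the claim, so countable additivity forces $m_\infty^\circ(\mathrm{Iso}(\cM)) = 0$, a contradiction.
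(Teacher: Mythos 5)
Your proposal is correct and follows essentially the same route as the paper: apply the construction, invoke Propositions~\ref{atomless-nondegenerate} and~\ref{asModel} for invariance and concentration, and then use the per-stage splitting supplied by condition~(S) together with the independent fair bits appended in Case~(b) of Substage~$n.3$ to drive the probability of any fixed quantifier-free $\ell$-type at a fixed $\ell$-tuple to~$0$, after which countable additivity over the countably many $\ell$-types realized in a given $\cM$ kills every isomorphism class. Two minor remarks: the appeal to ergodicity is superfluous (positive measure for $\mathrm{Iso}(\cM)$ already contradicts the cover by countably many null sets), and the paper is more conservative in the per-stage estimate, using the factor $1-2^{-\ell}$ (requiring only that at least two of the $2^\ell$ restrictions be distinct) together with an explicit $2^{-g}$ correction for the redundant-projection case, rather than your sharper $2^{-\ell}$ factor — both versions converge to $0$, but your sharper factor leans on the full ``$2^{\ell}$ pairwise distinct restrictions'' reading of~(S), and you should still account for the collision probability as the paper does.
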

\begin{proof}
Let $\ell\in \Nats$ be least such that $\<Q_i\>_{i\in \Nats}$ has splitting
of order $\ell$.
Let $m'$ be the $\sym^{C_0}$-invariant probability measure obtained in
Proposition~\ref{asModel}.
Define $M$ to be the
collection of isomorphism classes of countably infinite models of $T_\infty$ to which $m'$
assigns positive measure.

Suppose, to obtain a contradiction, that $M\neq \emptyset$. Then by the
countable additivity of $m'$, there can be at most countably many elements
of $M$. Hence among the quantifier-free $L_\infty$-types with
$\ell$-many free variables, at most countably many are realized in some
structure in $M$.
In particular, at most countably many \emph{non-constant} quantifier-free
$L_\infty$-types with
$\ell$-many free variables
are realized in some structure in $M$.
Then by countable additivity, there must be some non-constant quantifier-free
$L_\infty$-type $p$ with $\ell$-many free variables
that is realized in a positive fraction of models,
i.e., such that
\[
m'\bigl(\llrrC{(\exists \x)p(\x)}\bigr) >0,
\]
where $|x| = \ell$.

We then have
\[
0 < m'\bigl(\llrrC{(\exists \x)p(\x)}\bigr) =
m'\bigl(\bigcup_{\ttt \in \Nats^{\ell}}\llrrC{p(\ttt)}\bigr) \leq
\sum_{\ttt \in \Nats^\ell}m'\bigl(\llrrC{p(\ttt)}\bigr),
\]
where the equality is because $p$ is non-constant.
Hence there is some $\ttt\in \Nats^\ell$ such that
$m'\bigl(\llrrC{p(\ttt)}\bigr) > 0$,
by the countable additivity of $m'$.

For every $i \in \Nats \cup \{\infty\}$, define 
$\eta_i \defas
m'\bigl(\llrrC{p|_{L_{\alpha_i}}(\ttt)}\bigr)$.
Because \[L_0 \subseteq L_1 \subseteq \cdots \subseteq L_\infty,\] we have
$\eta_i\geq \eta_j$ whenever $0 \leq i < j \leq \infty$.

Let $g \ge \ell$ be arbitrary. We will show that
\[\eta_{g} \le 2^{-g} +  (1 - 2^{-\ell})^{g - \ell}.\]
This will imply that $\eta_\infty \le \inf_i (1 - 2^{-\ell})^{2 i} = 0$, and
so $m'\bigl(\llrrC{p(\ttt)}\bigr)  = 0$, a contradiction.

There are two (overlapping) ways 
that
an $\ell$-tuple 
of elements of
$X_\infty$ 
sampled independently
according to $m'$
can fail to satisfy $p|_{L_{\alpha_g}}$: either (1) the
restriction of the tuple to $\Nats^{2g}$ satisfies a redundant quantifier-free type, 
in which case
the tuple might not satisfy $p|_{L_{\alpha_g}}$, or (2) its
restriction to $\Nats^{2g}$ is non-redundant but satisfies some
quantifier-free type other than $p|_{L_{\alpha_g}}$.

By our choice of $\Lambda_g$ in stage $g.2$, we know that for any
assignment of mass to $X^1_g$,  the probability of an independently
selected $\ell$-tuple having two elements selected from the same element of
$X^2_g$ is no more than $2^{-g}$, as $g \ge \ell$. Hence 
the probability that
(1) 
occurs
is
bounded by $2^{-g}$.

Because the mass of every element is split evenly between those elements
descending from it via iterated duplication,
the probability that a given non-redundant $\ell$-tuple of $X^2_g$ is selected
independently according to $m^2_g$  is $2^\ell$ times the probability that
any of such duplicated elements are
selected independently according to
$m_g$.

Let $\zeta_g$ be the probability that a given $\ell$-tuple, independently
selected from $X_g$ \nobreak according to $m_g$, has quantifier-free type
$p|_{L_{\alpha_g}}$ conditioned on the fact each element of the
$\ell$-tuple is distinct (i.e., $\zeta_g$ is a bound on 
the probability that
(2)
occurs, so that
$\eta_g \le
2^{-g} + \zeta_g$).
By 
the
splitting of quantifier-free types in stage $g.3$, we know that
for every $\ell$-tuple in $X^2_g$ there are at least two quantifier-free
$L_{\alpha_{g}}$-types of duplicates of the $\ell$-tuple.

Hence we have
\[\zeta_g \le (1 - 2^{-\ell})\cdot \zeta_{g - 1}
\le (1 - 2^{-\ell})^{g - \ell} .
\]

In total, we have $\eta_g \le 2^{-g} +  (1 - 2^{-\ell})^{g - \ell}$.
\end{proof}


\section{Approximately
$\aleph_0$-categorical theories}
\label{almostSec}

In this section,
we introduce several conditions
on
first-order theories that 
together
allow us to apply
Theorem~\ref{maintheorem}.
These
will give us
an invariant probability measure that is concentrated on
the class of
models of 
a theory, but
does not 
assign
positive measure to any single
isomorphism class of 
models.
We then give
examples of first-order theories satisfying these conditions.

Key among these conditions is a property that we call
\emph{approximate
$\aleph_0$-categoricity}.

\begin{definition}
Let $L$ be a countable language.
A first-order theory $T \subseteq \Lww(L)$ is \defn{approximately
$\aleph_0$-categorical} when
there is a sequence of languages $\<L_i\>_{i\in\Nats}$, called a
\defn{witnessing sequence}, such that
\begin{itemize}
\item $L_i \subseteq L_{i+1}$ for all $i \in\Nats$,
\item $L = \bigcup_{i \in \Nats}L_i$, and
\item $T\cap \Lww(L_i)$ is $\aleph_0$-categorical for each $i \in \Nats$.
\end{itemize}
\end{definition}

In particular,
any 
approximately
$\aleph_0$-categorical
theory is
the countable union of
$\aleph_0$-categorical first-order theories (in different languages).

We now give criteria under which the class of models of an 
approximately \linebreak
$\aleph_0$-categorical theory admits an invariant probability measure
that assigns measure 
$0$
to any single isomorphism class of models.

Recall the notion of a pithy $\Pi_2$ expansion from \S\ref{defExpSec}.
Note that any model of a first-order $L$-theory $T$ has a unique expansion to a model of its pithy $\Pi_2$ expansion. Furthermore, any invariant measure concentrated on a Borel set $X \subseteq \Models_L$ can be expanded uniquely to an invariant measure concentrated on 
\[
\{ \M^* \in \Models_{L_\HF} \st \M^*|_L \in X\}.
\]

\begin{lemma}
\label{Tstar-pithy}
Let $L$ be a countable 
language, and suppose that $T$ is an 
approximately
$\aleph_0$-categorical
$\Lww(L)$-theory with witnessing sequence $\<L_i\>_{i\in\Nats}$.
Then the pithy $\Pi_2$ expansion $T^*$ of $T$ is also 
approximately
$\aleph_0$-categorical.
\end{lemma}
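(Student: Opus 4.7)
The plan is to construct an explicit witnessing sequence for $T^*$ out of the given one for $T$. For each $i \in \Nats$, set
\[
L^*_i \defas L_i \cup \{R_\psi \st \psi \in \Lww(L_i)\},
\]
which is the expanded language one obtains by applying Lemma~\ref{morleyization} to the language $L_i$ with admissible set $\HF$. Since the original sequence $\langle L_i \rangle_{i\in\Nats}$ is increasing with union $L$, the sequence $\langle L^*_i \rangle_{i\in\Nats}$ is increasing with union $L_\HF$; for the latter, note that every finitary $\Lww(L)$-formula uses only finitely many symbols and hence lies in some $\Lww(L_i)$, and that $\HF \cap \Lwow(L) = \Lww(L)$ because any countably infinite conjunction fails to be hereditarily finite.

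The heart of the argument is to verify that $T^* \cap \Lww(L^*_i)$ is $\aleph_0$-categorical for each $i$. First I would observe that $T^* \cap \Lww(L^*_i)$ splits as the union of $\fT_\HF \cap \Lww(L^*_i)$ and the set $\{(\forall x) R_\varphi(x) \st \varphi \in T \cap \Lww(L_i)\}$. By inspecting the axiom list of $\fT_\HF$ given in the proof of Lemma~\ref{morleyization}, the first piece is logically equivalent to the theory $\Sigma'$ one obtains by applying Lemma~\ref{morleyization} directly with $L_i$ in place of $L$: the axioms for $R_P$, $R_c$, $R_f$ and for $R_{\neg\psi}$, $R_{\bigwedge \psi_j}$, $R_{(\exists y)\varphi}$ that remain inside $\Lww(L^*_i)$ are exactly those indexed by symbols of $L_i$ and by formulas $\psi \in \Lww(L_i)$.

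Next I would invoke Lemma~\ref{morleyization}(ii) for $L_i$: every $L_i$-structure has a \emph{unique} expansion to an $L^*_i$-structure satisfying $\Sigma'$. By Lemma~\ref{morleyization}(i), this expansion satisfies $(\forall x) R_\varphi(x)$ precisely when its $L_i$-reduct satisfies $\varphi$. Consequently, $L_i$-reduction gives a bijection (respecting isomorphism) between countable models of $T \cap \Lww(L_i)$ and countable models of $T^* \cap \Lww(L^*_i)$. Since $T \cap \Lww(L_i)$ is $\aleph_0$-categorical by hypothesis, so is $T^* \cap \Lww(L^*_i)$, confirming that $\langle L^*_i\rangle_{i\in\Nats}$ witnesses the approximate $\aleph_0$-categoricity of $T^*$.

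The only real obstacle is the bookkeeping around the precise syntactic match between $\fT_\HF \cap \Lww(L^*_i)$ and $\Sigma'$; once that identification is pinned down from the explicit axiom list in Lemma~\ref{morleyization}, the model-theoretic content reduces to reading off the unique-expansion and equivalence clauses of that lemma, and no further substantive difficulty arises.
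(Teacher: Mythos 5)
Your proposal is correct and follows essentially the same strategy as the paper: define $L_i^*$ to be the language of the pithy $\Pi_2$ expansion of $T\cap\Lww(L_i)$, observe that the $L_i^*$ form a nested sequence with union $L_\HF$, and argue that $T^*\cap\Lww(L_i^*)$ coincides with the pithy $\Pi_2$ expansion $T_i^*$ of $T\cap\Lww(L_i)$ and hence inherits $\aleph_0$-categoricity via the unique-expansion clause of Lemma~\ref{morleyization}. The paper states these steps quite tersely (asserting $T^*\cap\Lww(L_i^*)=T_i^*$ without comment), whereas you justify the identification by walking through the axiom list of $\fT_\HF$ and checking which axioms survive restriction to $\Lww(L_i^*)$; this is the right level of care to take, and nothing you do departs substantively from the paper's route.
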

\begin{proof}
For each $i\in\Nats$, 
the $L_i$-theory $T \cap \Lww(L_i)$ is $\aleph_0$-categorical by
hypothesis. For each $i$, let
$L_i^*$ be the language of the pithy $\Pi_2$
expansion $T_i^*$ of $T \cap \Lww(L_i)$. Then
each $T_i^*$ is
$\aleph_0$-categorical. 
Note that $T^* \cap \Lww(L_i^*)  = T_i^*$ for each $i \in \Nats$, and
$\<L_i^*\>_{i\in\Nats}$ is a nested sequence whose union is the language of
$T^*$.
Hence $T^*$ is 
approximately
$\aleph_0$-categorical
with witnessing sequence
$\<L_i^*\>_{i\in\Nats}$.
\end{proof}

The following result is now straightforward from 
Theorem~\ref{maintheorem}.

\begin{theorem}
\label{SplittingTypesTheorem}
Let $L$ be a countable 
relational
language, and suppose that $T$ is an 
approximately
$\aleph_0$-categorical
$\Lww(L)$-theory
with witnessing sequence $\<L_i\>_{i\in\Nats}$.
For each $i\in\Nats$, let $Q_i$ be 
any enumeration of the
quantifier-free $L_i$-types that
are consistent with $T\cap\Lww(L_i)$.
Further suppose that
\begin{itemize}
\item for each $i \in \Nats$, the age of the unique countable model
(up to
isomorphism) of $T\cap \Lww(L_i)$ has
the strong amalgamation property,
and
\item
the sequence $\<Q_i\>_{i\in\Nats}$ has splitting of some order.
\end{itemize}
Then there is an $\sym$-invariant probability measure on $\Models_L$ that is concentrated
on the class of models of $T$ but that assigns measure $0$ to each isomorphism class of
models.
\end{theorem}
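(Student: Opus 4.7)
The plan is to reduce to Theorem~\ref{maintheorem} by passing to the pithy $\Pi_2$ expansion of $T$ and then taking a reduct. Since $L$ is relational, there are no constants, so $C = C_0 = \emptyset$ and $\sym^{C_0} = \sym$. By Lemma~\ref{morleyization} and Lemma~\ref{Tstar-pithy}, I replace $T$ with its pithy $\Pi_2$ expansion $T^* \subseteq \Lww(L_\HF)$, which is approximately $\aleph_0$-categorical with witnessing sequence $\langle L_i^* \rangle_{i \in \Nats}$, where $T_i^* \defas T^* \cap \Lww(L_i^*)$ is the pithy $\Pi_2$ expansion of $T \cap \Lww(L_i)$. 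Each $L_i^*$ remains relational; each $T_i^*$ is $\aleph_0$-categorical, quantifier-free complete, and has unique countable model $\M_i^*$, which is the unique $L_i^*$-expansion of the countable model of $T \cap \Lww(L_i)$ and is ultrahomogeneous in $L_i^*$.

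Let $Q_i^*$ enumerate the complete non-redundant quantifier-free $L_i^*$-types consistent with $T_i^*$. I verify the five conditions of \S\ref{invlimitconstruction}. Condition (C) is immediate; (E) follows from $T_i^* \subseteq T_{i+1}^*$; and (W) follows from the pithy $\Pi_2$ form of $T_i^*$ together with realizability of each axiom in $\M_i^*$. For (D), the hypothesized strong amalgamation of the $L_i$-age of the countable model of $T \cap \Lww(L_i)$ transfers to strong amalgamation of the $L_i^*$-age of $\M_i^*$ (since the Morleyization is definitional and preserves underlying sets), and combined with $L_i^*$-ultrahomogeneity of $\M_i^*$ yields trivial definable closure, hence duplication of non-constant quantifier-free $L_i^*$-types. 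For (S), I lift the given splitting as follows: given a non-redundant $q^* \in Q_i^*$ of arity $k \geq \ell$, I restrict to $q \defas q^*|_{L_i} \in Q_i$, apply the hypothesis to obtain a splitting $\qn \in Q_e$ of $q$, and construct a realization $\aa \in \M_e^*$ such that $\aa$ has quantifier-free $L_e$-type $\qn$ while every $\beta$-projection of $\aa$ has $L_i^*$-type $q^*$. The $L_e^*$-type of $\aa$ then serves as the required lifted splitting of $q^*$, and the distinctness of the sub-tuple types on $\ell$-subtuples is inherited from the distinctness of the original $p_0, p_1 \in Q_e$.

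Applying Theorem~\ref{maintheorem} to the data $(\langle L_i^* \rangle, \langle T_i^* \rangle, \langle Q_i^* \rangle)$ yields an $\sym$-invariant probability measure $\mu^*$ on $\Models_{L_\HF}$, concentrated on models of $T^*$ and assigning measure zero to every $L_\HF$-isomorphism class. I then set $\mu \defas r_* \mu^*$, where $r \colon \Models_{L_\HF} \to \Models_L$ is the Borel, $\sym$-equivariant reduct map. By uniqueness of the pithy $\Pi_2$ expansion, $r$ induces a bijection between $L$-isomorphism classes of models of $T$ and $L_\HF$-isomorphism classes of models of $T^*$, so $\mu([\M]_L) = \mu^*([\M^\#]_{L_\HF}) = 0$ for every $\M \models T$, where $\M^\#$ is the unique $L_\HF$-expansion of $\M$ to a model of $T^*$. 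The main obstacle is verifying (S): ensuring that the realization $\aa$ can be chosen to simultaneously exhibit the $L_e$-splitting combinatorics of $\qn$ and the correct $L_i^*$-type $q^*$ on every $\beta$-projection will require a careful amalgamation argument exploiting the trivial definable closure of $\M_e^*$ afforded by the strong amalgamation hypothesis.
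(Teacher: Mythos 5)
Your proposal follows the paper's proof essentially step for step: pass to the pithy $\Pi_2$ expansion $T^*$ via Lemma~\ref{Tstar-pithy}, verify conditions (W), (D), (E), (C), (S), apply Theorem~\ref{maintheorem}, and restrict the resulting invariant measure on $\Models_{L_\HF}$ back to $\Models_L$ via the reduct map. The paper is terser: it silently identifies each quantifier-free $L_i$-type in $Q_i$ with its unique $L_i^*$-extension consistent with $T_i^*$, so that (D), (E), (C), (W), (S) for $\<Q_i\>$ transfer verbatim to the expanded language, whereas you make the expansion explicit and then worry about lifting the splitting.

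The ``main obstacle'' you flag at the end is not actually an obstacle, and no amalgamation argument for a realization $\aa$ is needed. Since $\M_e^*|_{L_i^*} \models T_i^*$ and $T_i^*$ is $\aleph_0$-categorical, $\M_e^*|_{L_i^*} \cong \M_i^*$; and because $T_i^*$ is a definitional expansion making every formula of $A \cap \Lwow(L_i)$ quantifier-free equivalent, while $\M_i$ is the \Fr\ limit of its age and hence ultrahomogeneous, the quantifier-free $L_i^*$-type of a tuple in $\M_i^*$ is completely determined by its quantifier-free $L_i$-type. Thus if $\qn \in Q_e$ is the given splitting of $q = q^*|_{L_i}$, the unique $L_e^*$-extension $\qn^*$ of $\qn$ consistent with $T_e^*$ automatically has every $\beta$-projection restricting to $q^*$, and the distinct $p_0, p_1$ lift to distinct $p_0^*, p_1^*$ for the same reason. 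So the lifting is purely formal, exactly the identification the paper uses implicitly, and the proof is complete once you spell this out rather than deferring it.
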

\begin{proof}
By Lemma~\ref{Tstar-pithy}, the pithy $\Pi_2$ expansion $T^*$ of $T$ is approximately $\aleph_0$-categorical.
Note that for each $i\in\Nats$, every element of $Q_i$ is consistent with the pithy $\Pi_2$ expansion of $T \cap \Lww(L_i)$.
We may therefore 
run the construction of \S\ref{construction-subsec},
under the assumption that conditions (W), (D), (E), and (C)
hold
of $\<Q_i\>_{i\in\Nats}$. 
Under the further assumption that (S) holds of
$\<Q_i\>_{i\in\Nats}$,
we may
apply
Theorem~\ref{maintheorem}
to
obtain
an invariant measure on $\Models_{L_\HF}$ that is concentrated on the class of models of $T^*$ but that assigns measure $0$ to each isomorphism class.
The restriction of this invariant measure to $\Models_L$ will give us an invariant measure with the desired properties.
We now show that 
these five conditions
hold of $\<Q_i\>_{i\in\Nats}$.

Condition (D) follows from our first hypothesis, and (S) from our second.

Conditions (E) and (C) hold of $\<Q_i\>_{i\in\Nats}$ because
for each $i\in\Nats$, the set $Q_i$  contains
\emph{every} quantifier-free $L_i$-type that
is consistent with $T\cap\Lww(L_i)$.

Finally, we show condition (W). Note that
any pithy $\Pi_2$ sentence
\[(\forall \x)(\exists y)\psi(\x,y)\in T
\] is an $L_n$-formula for some $n\in\Nats$.
Hence as $Q_n$ is consistent with $T \cap \Lww(L_n)$,
for any quantifier-free $L_n$-type $q\in Q_n$,
there is some $q'\in Q_n$ extending $q$ 
such that 
for every tuple $\z$ of free variables of $q$ having size $|\x|$,
\[
\models
(\forall \ww)\bigl(
q'(\ww) \to
(\exists y) 
\psi(\z,y)\bigr)
\]
holds, where $|\ww|$ is the number of free variables of $q'$.
Therefore
condition (W) holds of  $\<Q_i\>_{i\in\Nats}$.
\end{proof}

In particular, a theory satisfying the hypotheses of 
Theorem~\ref{SplittingTypesTheorem} is not itself \linebreak
$\aleph_0$-categorical, as it must have uncountably many countable models.
We now use this theorem to give examples of an invariant measure
that is concentrated on the class of models of a first-order theory but 
but that assigns
measure
$0$ to each isomorphism class of models.


\subsection{Kaleidoscope theories}
Here
we show a simple way in which 
a
\Fr\ limit whose age has the strong amalgamation property
gives rise to
an 
approximately
\linebreak
$\aleph_0$-categorical theory,
which we call
its 
corresponding 
\emph{Kaleidoscope} theory,
whose countable models consist of
countably infinitely many copies of the \Fr\ limit combined in an appropriate way.
Furthermore, we show that if 
such a
\Fr\ limit 
satisfies the mild condition that
for some finite size its age has 
at least two non-equal
structures of that size (not necessarily non-isomorphic),  then
its Kaleidoscope theory satisfies the hypotheses of 
Theorem~\ref{SplittingTypesTheorem}.

\begin{definition}
Suppose $L$ is a countable relational language. 
Let 
$\<L^j\>_{j\in\Nats}$
be an infinite sequence of pairwise
disjoint copies of $L$ such that $L^0 = L$,  and
for $i\in\Nats$, define $L_i \defas \bigcup_{0\le j\le i} L^j$.
\end{definition}

\begin{lemma}
\label{AiAge}
Let $L$ be a 
countable
relational
language, and let $A$ be a 
strong amalgamation
class 
of $L$-structures.
For each $i\in\Nats$,
define $A_i$ to be the 
class
of all finite $L_i$-structures  $\M$ such that for
$0\le j\le i$, the reduct $\M|_{L^j}$ 
(when considered as an $L$-structure)
is in $A$.
Then each
$A_i$ is a 
strong amalgamation class.
\end{lemma}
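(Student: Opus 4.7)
The plan is to exploit the fact that $L_i$ is a disjoint union of the $L^j$ (for $0 \le j \le i$), so an $L_i$-structure on any set $X$ decomposes canonically into the $(i+1)$-tuple of its reducts $(\M|_{L^0}, \ldots, \M|_{L^i})$, and these reducts are independent (no shared symbols). Consequently, a map between $L_i$-structures is a substructure embedding (resp.\ isomorphism) if and only if it is an embedding (resp.\ isomorphism) of each of the $i+1$ reducts simultaneously. With this observation, every condition defining a strong amalgamation class transfers componentwise from $A$ to $A_i$.

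Concretely, I would verify each property in turn. Closure under isomorphism is immediate from the corresponding property of $A$ applied in each coordinate. For the hereditary property, if $\N \subseteq \M \in A_i$, then for each $j$ the $L^j$-reduct of $\N$ is a substructure of $\M|_{L^j} \in A$, hence is itself in $A$ by $(HP)$ for $A$; thus $\N \in A_i$. For the (strong) amalgamation property, given $\M \in A_i$ with embeddings into $\N_1, \N_2 \in A_i$, I would reduce (after renaming elements) to the case where these are inclusions and $N_1 \cap N_2 = M$ as underlying sets. For each $0 \le j \le i$, the strong amalgamation property of $A$ produces an $L$-structure $\PP^j \in A$ on the underlying set $N_1 \cup N_2$ whose restrictions to $N_1$ and $N_2$ are $\N_1|_{L^j}$ and $\N_2|_{L^j}$, respectively. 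Because the languages $L^0, \ldots, L^i$ share no symbols, these $\PP^j$ combine unambiguously into a single $L_i$-structure $\PP$ on $N_1 \cup N_2$ whose $L^j$-reduct is $\PP^j$ for each $j$; thus $\PP \in A_i$ and witnesses strong amalgamation in $A_i$ over $\M$. Joint embedding is the special case in which $\M$ is the empty structure.

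Finally, for the requirement that $A_i$ contain countably infinitely many isomorphism types: for each $n$ the number of $L_i$-structures on $\{1, \ldots, n\}$ is at most the product over $0 \le j \le i$ of the number of $L$-structures on $\{1, \ldots, n\}$, and each factor is countable because $L$ is countable. Summing over $n$ yields countably many isomorphism types in $A_i$, and $A_i$ is infinite because it contains the diagonal images of $A$ (the $L_i$-structures whose reducts to each $L^j$ all carry the \emph{same} element of $A$, transported along the canonical bijections $L^j \cong L$). No step here is a serious obstacle; the only thing to be careful about is that the strong amalgamation step really can be carried out on a common underlying set, which is where the disjointness of the languages $L^j$ is essential.
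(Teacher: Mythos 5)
Your proposal is correct and follows essentially the same approach as the paper: amalgamate coordinatewise in each sublanguage $L^j$, and use the disjointness of the $L^j$ to glue the coordinate amalgams into a single $L_i$-structure on the common underlying set. You are slightly more careful than the paper on one point — the paper asserts only that each $A_i$ "contains countably many isomorphism types," whereas the definition of an amalgamation class requires countably \emph{infinitely} many; your diagonal-embedding argument (transporting any element of $A$ along the canonical bijections $L^j \cong L$) supplies the missing infinitude cleanly.
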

\begin{proof}
Each $A_i$ satisfies the strong amalgamation property: Suppose
$\M$, $\N \in A_i$ have a common substructure $\O\in A_i$. 
For each $j$ such that $0 \le j \le i$, let $\X^j$ be a strong amalgam of
$\M|_{L^j}$ and $\N|_{L^j}$ over $\O|_{L^j}$. Because $\X^0, \ldots, \X^i$
are in disjoint languages and have the same underlying set, there is an
$L_i$-structure $\X$ on this underlying set such that for $0 \le j \le
i$, we have $\X|_{L^j} = \X^j$. Hence $\X\in A_i$ is a strong amalgam of $\M$, $\N$
over $\O$. 

Each $A_i$ is a 
class containing countably many isomorphism types,
for which the
hereditary property holds trivially.
Further, the joint embedding property holds by a similar argument to that above.
Thus each $A_i$ is a strong amalgamation
class.
\end{proof}

\begin{definition}
Using the 
notation of Lemma~\ref{AiAge},
for each $i\in \Nats$, let $T_i$  be the theory of 
the \Fr\ limit of $A_i$, and notice that $T_i \subseteq T_{i+1}$.
The theory $T_\infty \defas \bigcup_{i\in\Nats} T_i$
in the language $L_\infty \defas \bigcup_{i\in\Nats} L_i =
\bigcup_{j\in\Nats} L^j$
is therefore consistent.
The theory $T_\infty$ is said to be the \defn{Kaleidoscope theory}
built from $A$.
\end{definition}

\begin{proposition}
\label{KaleidoscopeInvMeas}
Let $L$ be a countable relational language, and let $A$ be a 
strong amalgamation class
of $L$-structures.
Let $T_\infty$, in the language $L_\infty$, be the 
Kaleidoscope  
theory built from $A$, as above.
Then $T_\infty$ is 
approximately
$\aleph_0$-categorical.

Furthermore, suppose that for some $n\in\Nats$, the age $A$ has at least two
non-equal elements of size $n$ on the same underlying set.
(Note that we do not require these elements to be non-isomorphic.)
Then
there is an $\sym$-invariant probability measure on $\Models_{L\infty}$ that is concentrated
on the class of models of $T_\infty$ but that assigns measure $0$ to each isomorphism class of
models.
\end{proposition}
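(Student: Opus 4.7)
The plan is to verify both assertions by applying Theorem~\ref{SplittingTypesTheorem}, after first establishing that $T_\infty$ is approximately $\aleph_0$-categorical.

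\emph{Approximate $\aleph_0$-categoricity.} When $L$ is finite relational, each $L_i$ is finite relational, so $T_i$ is the theory of a \Fr\ limit in a finite relational language and is therefore $\aleph_0$-categorical by the Ryll--Nardzewski theorem; the sequence $\<L_i\>_{i\in\Nats}$ from the definition is then the required witnessing sequence. When $L$ is infinite countable, I would instead take a cofinal chain of finite relational sublanguages obtained by diagonalizing across the copies $L^j$ and across finite subsets of the relation symbols in each copy, each stage yielding an $\aleph_0$-categorical \Fr\ theory whose union is $T_\infty$.

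\emph{Invariant measure.} I would apply Theorem~\ref{SplittingTypesTheorem}, taking each $Q_i$ to enumerate all quantifier-free $L_i$-types consistent with $T_i$. Strong amalgamation of each $A_i$ is given by Lemma~\ref{AiAge}, and since $Q_i$ contains every quantifier-free type consistent with $T_i$, only the splitting-of-types hypothesis remains. I claim that $\<Q_i\>_{i\in\Nats}$ has splitting of order $\ell = n$, where $n$ is as in the hypothesis and $S_0, S_1 \in A$ are the two non-equal $n$-element structures on a common underlying set.

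Given a non-redundant $q^i_j \in Q_i$ with $k \geq n$ free variables, the splitting $\qn$ is built on $2k$ variables $(x^0_1, x^1_1, \ldots, x^0_k, x^1_k)$ in two layers. In the already-present languages $L^0, \ldots, L^i$, take $\qn$ to be the iterated duplicate of $q^i_j$, which exists by strong amalgamation of $A_i$; this forces every ``diagonal'' tuple $(x^{\beta(1)}_1, \ldots, x^{\beta(k)}_k)$ to satisfy $q^i_j$, delivering the first clause of condition (S). Then for each $n$-subset $I = \{i_1 < \cdots < i_n\} \subseteq \{1,\ldots,k\}$ and each pair $\gamma_0 \neq \gamma_1$ in $\{0,1\}^n$, allocate a distinct fresh copy $L^{j(I,\gamma_0,\gamma_1)}$ with index beyond $i$, and use the non-equality of $S_0$ and $S_1$ together with strong amalgamation of $A$ to impose in that copy an $L$-structure on the $2k$ variables, lying in $A$, under which the quantifier-free types of $(x^{\gamma_0(1)}_{i_1}, \ldots, x^{\gamma_0(n)}_{i_n})$ and $(x^{\gamma_1(1)}_{i_1}, \ldots, x^{\gamma_1(n)}_{i_n})$ differ. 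Since the fresh copies are pairwise disjoint, these specifications combine independently into a complete quantifier-free $L_e$-type $\qn \in Q_e$ for suitable $e$, and the second clause of condition (S) is then immediate.

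\emph{Main obstacle.} The principal technical step is constructing, in each fresh language copy, an $L$-structure on the $2k$ variables that lies in $A$ and under which the two specified $n$-tuples realize different quantifier-free types. Because these tuples may share elements at positions where $\gamma_0$ and $\gamma_1$ agree, a naive assignment of $S_0$ to one and $S_1$ to the other can be inconsistent on the shared positions. The remedy is to fix the structure on one tuple to be $S_0$, then extend via strong amalgamation of $A$ to all $2k$ variables while using the remaining freedom---and the existence of $S_1$ as witness that not all $n$-element $A$-structures coincide---to arrange that the other $n$-tuple realizes a quantifier-free type different from the first. Once this local distinguishability is achieved independently in each fresh copy, the rest of the argument is mechanical, and Theorem~\ref{SplittingTypesTheorem} delivers the desired $\sym$-invariant measure.
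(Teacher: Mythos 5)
Your strategy matches the paper's: reduce to Theorem~\ref{SplittingTypesTheorem} via Lemma~\ref{AiAge}, with the substantive work being the verification of splitting of order $n$ by allocating fresh language copies. Where you diverge is in how the second clause of condition~(S) is organized, and that divergence exposes a gap in your argument that the paper avoids.

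The paper enumerates the ordered $2n$-tuples $\uu$ of \emph{distinct} variables among the $2k$ duplicates, uses the joint embedding property of $A$ to build a single quantifier-free $L$-type $p$ with $p(\vv,\ww)\to p_0(\vv)\wedge p_1(\ww)$ from the two non-equal structures $B_0,B_1$, and imposes $p(\uu)$ in the fresh copy $L^{j_\uu}$. The final observation is only that for any $2n$-tuple $y_1\cdots y_n z_1\cdots z_n$ of \emph{distinct} variables one has $\qn|_{y_1,\ldots,y_n}\ne \qn|_{z_1,\ldots,z_n}$. This handles exactly the ``complementary'' instances $\gamma_1 = 1-\gamma_0$ of condition~(S), and that is all the application in Theorem~\ref{maintheorem} actually needs: the splitting condition is consumed only via the consequence that the $2^\ell$ duplicates of any $\ell$-tuple realize at least two distinct quantifier-free types. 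You instead enumerate over all pairs $(\gamma_0,\gamma_1)$ with $\gamma_0\ne\gamma_1$ per $n$-subset of positions, which forces you into the case where $\uu_0$ and $\uu_1$ share variables. Your proposed remedy --- fix $S_0$ on $\uu_0$ and use strong amalgamation plus the existence of $S_1$ to steer $\uu_1$ elsewhere --- is not carried out, and it does not obviously work: the type of $\uu_1$ is already partially determined at the shared positions, and the hypothesis that $A$ contains two non-equal $n$-element structures does not by itself guarantee that the induced structure at those shared positions has two non-equal $A$-extensions to $n$ elements. This is a genuine gap in your write-up; the fix, as the paper shows, is that you do not need those overlapping instances at all.

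One point where you are more careful than the paper: you note that when $L$ is infinite, each $L_i$ is infinite and the theory of the \Fr\ limit of $A_i$ is not automatically $\aleph_0$-categorical, and you sketch a diagonalization over finite sublanguages; the paper simply asserts $\aleph_0$-categoricity of each $T_i$. Your sketch would itself require checking that the restriction of $T_\infty$ to a finite sublanguage is the theory of a \Fr\ limit, which is not immediate since $A$ need not restrict to an amalgamation class, but flagging the issue is to your credit.
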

\begin{proof}
For each $i\in\Nats$, let $A_i$ be 
as defined in 
Lemma~\ref{AiAge}; then
$A_i$ is 
the age of a model of $T_i$, which
is
an $\aleph_0$-categorical $L_i$-theory.
Therefore $T_\infty$
is an 
approximately
$\aleph_0$-categorical $L_\infty$-theory with
witnessing 
sequence $\<L_i\>_{i\in\Nats}$.

We will apply Theorem~\ref{SplittingTypesTheorem} to obtain the desired
invariant measure. We must show its two hypotheses: the strong amalgamation
property
for
the age of each $T_\infty \cap \Lww(L_i)$, and that
$\<Q_i\>_{i\in\Nats}$
(as defined in Theorem~\ref{SplittingTypesTheorem})
has splitting of some order.

For any $i\in\Nats$,
because 
$A_i$
is the age of the unique model of $T_i = T_\infty \cap \Lww(L_i)$,
we may apply
Lemma~\ref{AiAge} to see that 
$A_i$ 
is a
strong amalgamation 
class
as well.

We now show that $\<Q_i\>_{i\in\Nats}$
has splitting of order $n$. Fix $j\in\Nats$, and
let $q \in Q_j$ be a non-redundant quantifier-free $L_j$-type with
$k$-many free
variables,
for some $k>n$.
It suffices to find, for some $j'>j$, a quantifier-free type
$\qn\in Q_{j'}$ 
with free variables
$\xx \defas x_1^0, x_1^1, \ldots, x_k^0, x_k^1$
such that the
restriction $\qn$ to $L_j$
is an iterated duplicate of $q$, 
and
for any
$2n$-tuple $y_1 \cdots y_n z_1 \cdots z_n$ of distinct free variables of
$\qn$,
we have
\[
\qn|_{y_1, \ldots, y_n} \neq
\qn|_{z_1, \ldots, z_n},
\]
which ensures that $\qn$ is a splitting of $q$.
We construct $\qn$ in the following manner.

In languages $L^0, \ldots, L^j$, the quantifier-free type $\qn$ 
describes
an iterated duplicate of $q$; each of the remaining languages $L^{j+1}, \ldots,
L^{j'}$, corresponds to a particular way of choosing a $2n$-tuple of
variables from the 
$2k$-tuple $\xx$, and describes a pair of different $n$-element structures on
this $2n$-tuple.
Let $q^*$ be the quantifier-free $L_j$-type with free variables
$\xx$
that is an iterated duplicate of $q$.
Let $B_0$ and $B_1$ be two non-equal elements of $A$ of size $n$ on the
same underlying set 
$\{0, \ldots, n-1\}$,
and let $p_0, p_1 \in Q_0$
be
quantifier-free $L$-types 
such that
\[
B_i \models p_i(0, \ldots, n-1)
\]
for $i\in\{0,1\}$. 
By the joint embedding property of $A$, let
$p\in Q_0$
be any quantifier-free $L$-type 
with
$2n$-many free variables $v_1, \ldots, v_n, w_1, \ldots, w_n$ such that
\[ p(\vv, \ww)  \to p_0(\vv) \wedge p_1(\ww),
\]
where $\vv \defas  v_1\cdots v_n$ and $\ww \defas w_1 \cdots w_n$.

Enumerate all
$2n$-tuples of distinct variables of $\xx$.
Assign 
each
such tuple $\uu$ 
a distinct
value 
\[j_\uu \in [j+1, \ldots, j'],\]
where $j' \defas j+ (2k)(2k-1)\cdots(2k - 2n+1)$.
For each such tuple $\uu$,
choose a 
quantifier-free $L$-type $q_\uu$ with
free variables
$\xx$
such that
\[
\models (\forall  \xx)
\bigl(q_\uu(
\xx
) \to p(\uu)\bigr).
\]
Let $\qn$ be  a quantifier-free $L_{j'}$-type
with free variables
$\xx$
that implies  $q^*(\xx)$ and  that also implies, for each such tuple $\uu$,
that $q^{L^{j_\uu}}_\uu(\xx)$ holds, where $q_\uu^{L^{j_\uu}}$ describes in language
$L^{j_\uu}$ what $q_\uu$ describes in $L$.
Note that we can find such a $\qn$ because the restrictions of $T_\infty$
to each copy of $L$ do not interact with each other.
Finally, because $p(\vv, \ww) \neq p(\ww, \vv)$,  for any
$2n$-tuple $y_1 \cdots y_n z_1 \cdots z_n$ of distinct free variables of
$\qn$, we have that
\[
\qn|_{y_1, \ldots, y_n} \neq
\qn|_{z_1, \ldots, z_n} .
\]

Therefore
$\<Q_i\>_{i\in\Nats}$
has splitting of order $n$.
\end{proof}

A key example of this construction
is provided by what we call the
\emph{Kaleidoscope random graphs}, which  
are the countable models of the Kaleidoscope theory 
built from the class of
finite graphs (in the language of graphs).
There are
continuum-many
Kaleidoscope random graphs (up to isomorphism).
Each 
Kaleidoscope random graph $G$
can be thought of as
countably many random graphs (i.e., Rado graphs), each with a different
color for its edge-set, overlaid on the same
vertex-set
in such a way 
that
for 
every finite 
substructure  $F$ of $G$ and any chosen finite set of colors,
there is 
an extension of $F$ by a single vertex $v$ of $G$ satisfying
any
given assignment of edges and non-edges in those colors between 
$v$
and the 
vertices of $F$.

The invariant measures provided by
Proposition~\ref{KaleidoscopeInvMeas} 
are
fundamentally different from those obtained
in \cite{AFP}. 
No
measure provided by Proposition~\ref{KaleidoscopeInvMeas} is
concentrated on the isomorphism class of a single structure, nor is any such measure
concentrated on a class of structures 
having 
trivial definable closure. To see this, consider
such a measure,
and suppose $n\in\Nats$ is such that the age $A$ has at least two elements
of size $n$. Then for a structure sampled from the invariant measure, with
probability $1$ the 
tuple
$0, \ldots, n-1$
has
a quantifier-free type
different from that of every other $n$-tuple in the structure.
Hence the structures sampled from such a measure almost surely do not have
trivial definable closure. As a consequence of this and
the main result of \cite{AFP},
for almost 
every structure
sampled
from this measure,
there 
is no
invariant measure concentrated on the isomorphism class of just that structure.


\subsection{Urysohn space}
The \emph{Urysohn space} $\UU$ is the universal ultrahomogeneous
Polish space.
In other words, up to isomorphism (i.e., bijective isometry), $\UU$ is the
unique 
complete separable metric space 
that is \emph{universal},
in 
that $\UU$ contains an 
isomorphic
copy of every complete separable metric space,
and 
\emph{ultrahomogeneous},
in 
that every 
isomorphism
between two finite subsets of $\UU$ can be extended to an 
isomorphism
of the entire space $\UU$.

Although Urysohn's work predates that of \Fr~\cite{MR0057220},
his construction of $\UU$ can be viewed as a continuous generalization of the \Fr\ method.
Hu\v{s}ek \cite{MR2435145}
describes Urysohn's original construction
\cite{Urysohn27}
and its history, and Kat\v{e}tov's more recent generalizations
\cite{MR952617}.
For further background, see the introductory remarks in
Hubi\v{c}ka--Ne\v{s}et\v{r}il \cite{MR2435144} and
Cameron--Vershik \cite{MR2258622}.
For perspectives from model theory and descriptive set theory, see,
e.g., 
Ealy--Goldbring \cite{MR2951630},
Melleray \cite{MR2435148},
Pestov \cite{MR2435149}, and
Usvyatsov \cite{MR2435152}.

Vershik \cite{MR2006015}, \cite{MR2086637} has demonstrated how
Urysohn space,
in addition to being the universal ultrahomogeneous Polish space,
also can be viewed as the \emph{generic} Polish space, and as a
\emph{random} Polish space.
Namely, 
Vershik shows that
$\UU$ is the generic complete separable metric space, in the sense of Baire category,
and he provides symmetric random constructions of $\UU$ by describing a wide class of
invariant measures  concentrated on the class of metric spaces whose completion is
$\UU$. As with the constructions in
\cite{MR2724668} and \cite{AFP},
these measures are determined by
sampling from certain continuum-sized structures.

Here we
construct an
approximately
$\aleph_0$-categorical theory
whose models 
are
those countable metric spaces 
(encoded in an infinite relational language)
that have
Urysohn
space
as their completion.
Hence
our invariant probability measure concentrated on the class of models of
this theory can be thought of as providing yet another symmetric random construction of Urysohn space.

Before describing the theory itself, we 
provide a relational
axiomatization of metric spaces
using infinitely many binary relations,
where the distance function is implicit  in these
relations.
Let 
$\LMS$ be the language consisting of a binary relation $d_{q}$ 
for
every $q\in\RatNonneg$. Given a metric space with distance function
$\dd$, the
intended interpretation will be that $d_q(x,y)$ holds when
$\dd(x,y) \le q$.
More explicitly, we have, for all $q, r \in \RatNonneg$,
\begin{itemize}
\item $(\forall x)(\forall y) \ \bigl( d_q(x,y) \rightarrow d_r(x,y)\bigr)$ when
$r \ge q$,
\item $(\forall x)(\forall y) \ \bigl(d_q(x,y) \leftrightarrow d_q(y,x)\bigr)$,
\item $ (\forall x) (\forall y) (\forall z) \ \bigl( (d_q(x,y) \And d_r(y,z)) \rightarrow
d_{q+r}(x, z) \bigr)$, and
\item 
$(\forall x) \ d_0(x,x).$
\end{itemize}
Let $\TMS$ denote this theory in the language $\LMS$.

The following 
result
is immediate.
\begin{proposition}
\label{metricIff}
For every metric space $\SS = (S, \dd_S)$, the $\LMS$-structure $\M_\SS$ with underlying set $S$
and sequence of relations
$\<d_q^{\M_\SS}\>_{q\in\RatNonneg}$ defined by
\[d^{\M_\SS}_q(x,y) 
\quad \text{if and only if} \quad
\dd_S(x, y) \le q\]
is a model of  $\TMS$.

Conversely, if $\N$ is a model of $\TMS$ with
underlying set $N$, and
\[\dd_\N(x,y) \defas \inf\, \{ q \in \RatNonneg \st \N \models d_q(x,y)\},\]
then $\W_\N \defas (N, \dd_\N)$ is a metric space.
\end{proposition}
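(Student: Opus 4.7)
The proposition is the straightforward correspondence between the axiomatic theory $\TMS$ and (pseudo-)metric spaces, so the plan is to verify each side directly from the axioms. Both directions are essentially routine, so I will organize the proof as two separate implications and flag the one place where care is needed.

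For the forward direction, assuming $\SS = (S, \dd_S)$ is a metric space, I would verify each of the four $\TMS$-axioms in $\M_\SS$ by unwinding $d^{\M_\SS}_q(x,y) \Leftrightarrow \dd_S(x,y)\le q$. Monotonicity in the index follows from transitivity of $\le$ on $\Reals$; the symmetry axiom is immediate from $\dd_S(x,y)=\dd_S(y,x)$; the triangle-inequality axiom follows because $\dd_S(x,y)\le q$ and $\dd_S(y,z)\le r$ imply $\dd_S(x,z) \le \dd_S(x,y)+\dd_S(y,z) \le q+r$; and $d_0^{\M_\SS}(x,x)$ holds because $\dd_S(x,x)=0\le 0$.

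For the converse, I would check the metric-space axioms for $\dd_\N$ directly from its definition as an infimum. Nonnegativity is immediate since $\RatNonneg\subseteq[0,\infty)$. The equality $\dd_\N(x,x)=0$ holds because $\N\models d_0(x,x)$ forces $0$ to lie in the set whose infimum defines $\dd_\N(x,x)$. Symmetry follows by applying the symmetry axiom termwise: for each $q\in\RatNonneg$, we have $\N\models d_q(x,y)$ iff $\N\models d_q(y,x)$, so the two infima agree. For the triangle inequality, I would use an $\varepsilon/2$-approximation: given $\varepsilon>0$, choose $q,r\in\RatNonneg$ with $\N\models d_q(x,y)$, $\N\models d_r(y,z)$, $q<\dd_\N(x,y)+\varepsilon/2$, and $r<\dd_\N(y,z)+\varepsilon/2$; then the third axiom of $\TMS$ yields $\N\models d_{q+r}(x,z)$, whence $\dd_\N(x,z)\le q+r<\dd_\N(x,y)+\dd_\N(y,z)+\varepsilon$, and I let $\varepsilon\to 0$.

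The one subtle point, and the only potential obstacle, is the identity-of-indiscernibles condition $\dd_\N(x,y)=0\Rightarrow x=y$. As stated, the axioms of $\TMS$ do not force this, so in general $(N,\dd_\N)$ is only a pseudo-metric space. In the proof I would either interpret \emph{metric space} in the generalized sense allowing distinct points at distance zero, or else pass to the quotient by the equivalence relation $x\sim y\Leftrightarrow\dd_\N(x,y)=0$; this quotient is well-defined because the triangle inequality ensures that $\sim$ is compatible with $\dd_\N$. Since the application in the sequel concerns metric completions (which identify points at distance zero anyway), this technicality causes no real difficulty, and I would simply flag it in a brief remark accompanying the verification.
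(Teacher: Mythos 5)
The paper offers no proof at all---it calls the result ``immediate''---so your explicit verification is exactly the intended argument, and everything you write is correct as far as it goes. Your flag of the identity-of-indiscernibles gap is a genuine observation: nothing in $\TMS$ prevents two distinct elements from satisfying every $d_q$, so $\W_\N$ is a priori only a pseudo-metric space. The paper tacitly acknowledges this by introducing, immediately after the proposition, the infinitary axiom $(\forall x)(\forall y)(d_0(x,y)\to x=y)$ needed to recover $\N=\M_\SS$, and again in the proof of Proposition~\ref{conservative}, where it concedes that a certain $(N,\delta_\N)$ need not be a metric space. There is, however, a second degeneracy of the same kind that you should also flag: nothing in $\TMS$ forces the set $\{q\in\RatNonneg \st \N\models d_q(x,y)\}$ to be nonempty, so $\dd_\N(x,y)$ can take the value $+\infty$. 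This quietly undermines your remark that nonnegativity is immediate because $\RatNonneg\subseteq[0,\infty)$ (an infimum over the empty set is $+\infty$), and your $\varepsilon/2$-step for the triangle inequality tacitly assumes the two relevant sets are nonempty, though the infinite case is of course dispatched by a trivial case split. Strictly, ``metric space'' in the proposition should be read as ``extended pseudo-metric space,'' or one should quotient by the distance-zero relation and restrict to finite-distance components; you caught one of the two needed caveats, which is the more substantive one for the use the paper makes of this correspondence.
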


We will use the maps $\SS \mapsto \M_\SS$ and $\N \mapsto \W_\N$ that are
implicit in
Proposition~\ref{metricIff} throughout our discussion of Urysohn space.

Note that when a model $\N$ of $\TMS$ further satisfies, for each $q\in
\RatNonneg$,
the infinitary axioms
\begin{itemize}
\item $\displaystyle 
(\forall x) \ \Bigl (\bigl(\bigwedge_{p> q} d_p(x,
y)\bigr) \rightarrow d_q(x,y) \Bigr)$ and
\item $\displaystyle (\forall x) (\forall y) \ \bigl(d_0(x,y) \rightarrow (x = y)\bigr),$
\end{itemize}
then $\N = \M_\SS$  for some metric space $\SS$.
However, we will not be
able to ensure 
that these axioms hold
in 
our construction,
each stage of which involves a language that has
only
a finite number of relations of
the form $d_q$.

\begin{proposition}
For any finite sublanguage $L$ of $\LMS$, every model of
the restriction 
$\TMS\cap \Lww(L)$
of $\TMS$ can be extended to a model of $\TMS$.
\label{conservative}
\end{proposition}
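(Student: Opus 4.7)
My plan is to construct the extension explicitly via a path-distance. Given $\N \models \TMS \cap \Lww(L)$ --- which I read in the standard way as $\N$ satisfying every $\Lww(L)$-consequence of $\TMS$ --- I would define an $\LMS$-structure $\N^*$ on the same underlying set $N$ by declaring, for each $q \in \RatNonneg$ and $x, y \in N$, that $\N^* \models d_q(x, y)$ if and only if there exist $k \ge 0$, elements $z_0 = x, z_1, \ldots, z_k = y$ in $N$, and rationals $q_1, \ldots, q_k \in L \cap \RatNonneg$ with $\sum_{i=1}^k q_i \le q$ such that $\N \models d_{q_i}(z_{i-1}, z_i)$ for each $1 \le i \le k$. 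The empty-path case ($k=0$) forces $x = y$ and secures reflexivity $d_0^{\N^*}(x,x)$ at once.

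I would first verify that $\N^* \models \TMS$. Monotonicity of the $d_q^{\N^*}$ is immediate; the triangle axiom $d_q^{\N^*}(x,y) \And d_r^{\N^*}(y,z) \to d_{q+r}^{\N^*}(x,z)$ is witnessed by concatenating two paths; symmetry of $d_q^{\N^*}$ follows by reversing a witnessing path, using that $\N$ itself satisfies the $\Lww(L)$-consequence $d_q(x,y) \leftrightarrow d_q(y,x)$; and reflexivity is supplied by the empty path.

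I would then check that $\N^*|_L = \N$, so that $\N^*$ is genuinely an extension. If $\N \models d_q(x,y)$ for $q \in L$ the single-step path gives $\N^* \models d_q(x,y)$ at once. In the reverse direction, suppose a path with weights $q_1, \ldots, q_k \in L \cap \RatNonneg$ summing to at most $q \in L \cap \RatNonneg$ witnesses $d_q^{\N^*}(x,y)$. Iterated triangle within $\TMS$ proves $d_{q_1 + \cdots + q_k}(u_0, u_k)$ from $\bigwedge_i d_{q_i}(u_{i-1}, u_i)$, and monotonicity then yields the $\Lww(L)$-sentence
\[
\TMS \models (\forall u_0 \cdots u_k)\Bigl(\bigwedge_{i=1}^k d_{q_i}(u_{i-1}, u_i) \to d_q(u_0, u_k)\Bigr),
\]
which accordingly lies in $\TMS \cap \Lww(L)$ and holds in $\N$; instantiating at $z_0, \ldots, z_k$ yields $\N \models d_q(x,y)$.

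The hard part is this reverse direction, and it depends crucially on the reading of $\TMS \cap \Lww(L)$ as the $\Lww(L)$-consequences of $\TMS$ rather than merely those axioms of $\TMS$ already lying in $\Lww(L)$; under the latter reading the intermediate partial sums $q_1 + \cdots + q_j$ typically fall outside $L$ and no single axiom of $\TMS \cap \Lww(L)$ propagates the chain, so the construction would fail to restrict to $\N$. Alternatively, one can obtain the extension $\N^*$ directly by compactness applied to $\TMS \cup \Delta(\N)$: any finitely unsatisfiable fragment of that theory yields, after replacing the constants naming elements of $N$ by variables, a universal $\Lww(L)$-consequence of $\TMS$ that $\N$ would then fail to satisfy, contradicting $\N \models \TMS \cap \Lww(L)$.
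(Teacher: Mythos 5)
Your main construction --- shortest-path closure carried out directly in the relational signature --- is essentially the paper's idea: the paper first caps each pair's distance at $2\max\{q : d_q \in L\}$, then takes the path-infimum $\delta_\N$, and converts back to an $\LMS$-structure via the map $\SS\mapsto\M_\SS$ of Proposition~\ref{metricIff}; you stay at the level of relations and dispense with the cap (leaving $d_q$ false for every $q$ on pairs joined by no path, which $\TMS$ permits). Your remark about the reading of $\TMS\cap\Lww(L)$ is a genuinely useful clarification, not a pedantic one: read literally as the set of listed axioms of $\TMS$ that happen to lie in $\Lww(L)$, the proposition is actually false --- for $L = \{d_1, d_3\}$, a three-point $L$-structure with $d_1(a,b)$, $d_1(b,c)$, $\neg d_3(a,c)$ satisfies every listed axiom in $\Lww(L)$ (there is no triangle axiom with $q$, $r$, and $q+r$ all in $\{1,3\}$), yet cannot be extended to a model of $\TMS$, since any extension would satisfy $d_2(a,c)$ and hence $d_3(a,c)$. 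So the intended reading has to be the $\Lww(L)$-consequences of $\TMS$, and both your argument and the paper's rely on it: the paper's step ``by the triangle inequality $\delta_\N(x,y) > q$'' needs exactly the iterated-triangle consequences you isolate. Your compactness alternative is a genuinely different route: since $\TMS$ is a universal first-order theory and the diagram $\Delta(\N)$ is quantifier-free, finite unsatisfiability of $\TMS\cup\Delta(\N)$ would, after generalizing out the parameters, produce a universal $\Lww(L)$-consequence of $\TMS$ that fails in $\N$; this is shorter and insensitive to the exact choice of axioms, at the cost of being non-explicit.
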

\begin{proof}
Let $L$ be a finite sublanguage of $\LMS$, and let $\N$ be a model of
$\TMS\cap \Lww(L)$
with underlying set $N$.
Define
\[
\Rationals_L \defas \{ q \in \RatNonneg \st d_q\in L\}.
\]
Let $p \defas \max\, \Rationals_L$.
For 
every pair of distinct elements $x,y\in \N$, define
\[\delta^*_\N(x,y) \defas \min\bigl(2p, \ 
\inf\, \{ q \in \Rationals_L \st \N \models d_q(x,y)\} \bigr),
\]
and for all $x\in \N$ set
\[\delta^*_\N(x,x) \defas 0.
\]
Finally, define
\[\delta_\N(x,y) \defas \inf\, \{ \delta^*_\N(x, z_1) + \delta^*_\N(z_1, z_2) + \cdots + \delta^*_\N(z_n, y) \st 
n\ge 1\text{~and~} z_1, \ldots, z_n \in \N 
\}.
\]
Although
$(N,\delta_\N)$ need not be a metric space,
the $\LMS$-structure
$\M_{(N,\delta_\N)}$, given by the map defined in Proposition~\ref{metricIff},
is a model of $\TMS$.
By construction, if \linebreak $\N \models d_q(x,y)$, then $\delta_\N(x,y) \le q$.
However, if $\N \models \neg d_q(x,y)$, then by the triangle inequality $\delta_\N(x,y) > q$.
Hence $(N,\delta_\N)$ is 
consistent with the above ``intended interpretation'' of the relations in $\N$.
In particular, $\M_{(N,\delta_\N)}$ is an expansion of $\N$ to $\LMS$ 
that is a model of $\TMS$.
\end{proof}

We 
now describe 
an important class of
examples of countable metric spaces whose
completions are (isomorphic to)
the full Urysohn space.

\begin{definition}
Let
$D$
be a 
countable dense subset of
$\Rplus$.
Consider the class 
$\mathscr{S}$
of \linebreak finite metric spaces $\SS$ whose
non-zero distances 
occur
in $D$, and let \linebreak $\mathscr{F} \defas \{\M_\SS \st \SS \in \mathscr{S}\}$. Note that
$\mathscr{F}$ is an amalgamation 
class.
Define $D\UU$ to be $\W_\N$, where $\N$ is
the \Fr\ limit of 
$\mathscr{F}$.
\end{definition}

It is a standard result
that any such $D\UU$ is a metric space whose completion is $\UU$.
The particular case $\Rationals \UU$ has been well-studied, and is
known as the \emph{rational Urysohn space}.

We now extend $\TMS$ to an $\LMS$-theory $T_U$
whose countable models will be precisely those
$\LMS$-structures $\N$ 
for which
the completion of $\W_\N$ is 
isomorphic
to $\UU$.
We will work with 
finite sublanguages of $\LMS$,
rather than all of $\LMS$,
because there is no (countable) \Fr\ limit of the class of finite models of $\TMS$; in particular,
there are continuum-many non-isomorphic finite models of $\TMS$, even of size $2$. On the other hand,
in every finite sublanguage $L$ of $\LMS$, there is a \Fr\ limit of
the countably many (up to isomorphism) finite models of 
$\TMS\cap \Lww(L)$.

\begin{definition}
Let $L$ be a finite sublanguage of
$\LMS$. Note that the class of finite models of 
$\TMS\cap \Lww(L)$ 
is an amalgamation 
class.
Let $T_U^L$ be the $\Lww(L)$-theory 
of the \Fr\ limit of this class, and
define
\[T_U \defas
\bigcup\bigl\{T_U^L \st \text{finite~} L\subseteq \LMS\bigr\}.\]
\end{definition}

\begin{proposition}
The theory $T_{U}$ is consistent.
\label{consistentTu}
\end{proposition}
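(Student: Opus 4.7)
The plan is to apply the compactness theorem. It suffices to show that every finite subset $T_0 \subseteq T_U$ is satisfiable. Any such $T_0$ is contained in $T_U^{L_1} \cup \cdots \cup T_U^{L_k}$ for some finitely many finite sublanguages $L_1, \dots, L_k$ of $\LMS$. Let $L \defas \bigcup_{j=1}^k L_j$, which is itself a finite sublanguage of $\LMS$, and let $\N_L$ denote the \Fr\ limit of the class $\mathscr{C}_L$ of finite models of $\TMS \cap \Lww(L)$. I will argue that $\N_L$ is a model of $T_U^{L_j}$ for each $j$, and hence a model of $T_0$.

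Fix $j$. Since $T_U^{L_j}$ is $\aleph_0$-categorical, being the theory of a \Fr\ limit in a finite relational language, it is axiomatized by its unique countable model $\N_{L_j}$ up to isomorphism, so it suffices to show that the reduct $\N_L|_{L_j}$ is isomorphic to $\N_{L_j}$. By \Fr's theorem this in turn reduces to verifying that $\N_L|_{L_j}$ is ultrahomogeneous and that its age is $\mathscr{C}_{L_j}$.

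The inclusion that the age of $\N_L|_{L_j}$ lies in $\mathscr{C}_{L_j}$ is immediate, since the $L_j$-reduct of any model of $\TMS \cap \Lww(L)$ satisfies $\TMS \cap \Lww(L_j)$. For the reverse inclusion I would use Proposition~\ref{conservative}: every finite element of $\mathscr{C}_{L_j}$ extends to a model of $\TMS$, whose $L$-reduct then lies in $\mathscr{C}_L$ and hence embeds into $\N_L$; restricting back to $L_j$ recovers the original $\mathscr{C}_{L_j}$-structure.

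The main obstacle is ultrahomogeneity of $\N_L|_{L_j}$, which I would prove by a back-and-forth argument. Given an $L_j$-isomorphism $f\colon \AA \to \BB$ between finite substructures of $\N_L|_{L_j}$ and a new element $c \in \N_L$, the task is to find $d \in \N_L$ such that $f \cup \{(c,d)\}$ is again an $L_j$-isomorphism. For this I need to amalgamate the $L$-type of $\BB$ in $\N_L$ with the $L_j$-type of $\AA \cup \{c\}$ transported by $f$ into a single $L$-type on $\BB \cup \{d\}$ that lies in $\mathscr{C}_L$; an application of $\N_L$'s one-point extension property then produces the desired $d$. I would carry out this amalgamation by first applying Proposition~\ref{conservative} to realize the $L$-type of $\BB$ as a finite metric space, and then extending that metric space by a fresh point whose distances to $\BB$ are chosen to fall within the intervals prescribed by the target $L_j$-type; the strong amalgamation of $\mathscr{C}_L$ and the flexibility afforded by the missing triangle constraints across the language gap between $L_j$ and $L$ is what makes such a choice possible.
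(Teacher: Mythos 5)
Your proof takes a compactness route, whereas the paper instead exhibits the explicit model $\M_{\Rationals\UU}$ and argues that each reduct $\M_{\Rationals\UU}|_L$ to a finite sublanguage is the \Fr\ limit of the finite models of $\TMS \cap \Lww(L)$. Setting that structural difference aside, there is a genuine gap at the step you yourself flag as the main obstacle: the reduct $\N_L|_{L_j}$ is \emph{not} ultrahomogeneous in general when $L_j \subsetneq L$, so the amalgamation in your back-and-forth cannot always be carried out.

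For a concrete failure, take $L_j = \{d_1\}$ and $L = \{d_1, d_2\}$. Inside $\N_L$ choose $a_1, a_2$ with quantifier-free $L$-type $\neg d_1(a_1,a_2) \wedge d_2(a_1,a_2)$ and $b_1, b_2$ with $\neg d_1(b_1,b_2) \wedge \neg d_2(b_1,b_2)$; both two-element $L$-types are realized in $\N_L$, since both are reducts of finite metric spaces. The two pairs have the same $L_j$-type, so $f\colon a_i \mapsto b_i$ is an $L_j$-isomorphism between finite substructures of $\N_L|_{L_j}$. By the one-point extension property of $\N_L$, there is $c \in \N_L$ with $d_1(a_1,c) \wedge d_1(a_2,c)$; this is consistent precisely because $d_2(a_1,a_2)$ holds. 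But the triangle axiom $(\forall x,y,z)\bigl(d_1(x,z) \wedge d_1(z,y) \to d_2(x,y)\bigr)$ lies in $\TMS \cap \Lww(L)$ and hence holds throughout $\N_L$; with $x=b_1$, $y=b_2$ it rules out \emph{any} $d$ with $d_1(b_1,d) \wedge d_1(b_2,d)$, since $\neg d_2(b_1,b_2)$. So $f$ has no extension sending $c$ anywhere, and $\N_L|_{L_j}$ is not ultrahomogeneous. The heuristic you offer, that ``missing triangle constraints across the language gap'' create flexibility, runs in the wrong direction: the larger language $L$ imposes \emph{extra} triangle constraints on $\BB$, invisible in $L_j$, which can be incompatible with the $L_j$-type transported from $\AA \cup \{c\}$, precisely because $f$ need not preserve the $(L \setminus L_j)$-part of the type.
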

\begin{proof}
Consider the $\LMS$-structure
$\M_{\Rationals\UU}$.
It is a \Fr\ limit of 
the class of those finite models $\N$ of $\TMS$ 
for which
$\W_\N$ is a metric space with only rational distances.
By Proposition~\ref{conservative}, 
and as
$\Rationals$ is dense in $\Reals$,
for any finite sublanguage $L$ of $\LMS$,
the \Fr\ limit of the class of finite
models of 
$\TMS\cap \Lww(L)$ 
is isomorphic to 
$\M_{\Rationals\UU}|_L$.
Hence
$\M_{\Rationals\UU}|_L$ is a model of $T_U^L$. Therefore
$\M_{\Rationals\UU}$
is a model of $T_U$,  and so
$T_U$ is consistent.
\end{proof}

Note that by the above proof, for any countable dense subset
$D\subseteq\Rplus$, the \linebreak $\LMS$-structure $D\UU$ is a model of $T_U$. As
these are all non-isomorphic, $T_U$ has continuum-many countable models.
Also note that for any finite sublanguage $L$ of $\LMS$ 
and dense $D,
E\subseteq\Rplus$, the $L$-structures $\M_{D\UU}|_L$ and $\M_{E\UU}|_L$ are
isomorphic (and are both \Fr\ limits as in the above proof).

\begin{theorem}
\label{otherdirection}
Let $\SS = (S, \dd_S)$ be a countable metric space.
Then $\M_\SS$ is a model of $T_U$ if and only if
the completion of $\SS$ is 
isomorphic
to $\UU$.
\end{theorem}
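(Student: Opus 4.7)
The plan is to prove both directions by interpreting the pithy $\Pi_2$ extension axioms of each $T_U^L$ as a rational, finitary version of the one-point Urysohn extension property, and then transferring this property between $\SS$ and its metric completion $\widehat{\SS}$.

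For the forward direction, suppose $\M_\SS \models T_U$. Then for every finite sublanguage $L \subseteq \LMS$ the structure $\M_\SS|_L$ satisfies $T_U^L$ and so, by $\aleph_0$-categoricity, is isomorphic to the \Fr\ limit of the class of finite models of $\TMS \cap \Lww(L)$. Reading off the one-point extension axioms of each such \Fr\ limit yields the following approximate Urysohn extension property for $\SS$: for every finite tuple $a_1, \dots, a_k \in S$, every Kat\v{e}tov function $f\colon \{a_1, \dots, a_k\} \to \Rplus$ (i.e., one such that there exists a metric extension of these points by a new point at distances $f(a_i)$), and every $\epsilon > 0$, there is some $y \in S$ with $|\dd_S(y, a_i) - f(a_i)| < \epsilon$ for all $i$. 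Indeed, choosing $L$ to contain rational approximations $q_i^\pm$ to each $f(a_i)$ within $\epsilon$, the $L$-type over $a_1, \dots, a_k$ asserting $q_i^- < \dd(y, a_i) \le q_i^+$ is consistent with $T_U^L$ (witnessed by the Kat\v{e}tov extension itself), and the corresponding pithy $\Pi_2$ extension axiom provides the required $y \in S$.

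A standard Cauchy sequence construction then lifts this approximate property to the exact one-point extension property in $\widehat{\SS}$: given a finite $F \subseteq \widehat{\SS}$ and a Kat\v{e}tov function $f\colon F \to \Rplus$, inductively choose finer and finer approximating sets $F_n \subseteq S$ to $F$ and points $y_n \in S$ produced by the approximate property applied to $F_n \cup \{y_{n-1}\}$, with the target distance from $y_n$ to $y_{n-1}$ forced to be small, so that $\<y_n\>_{n\in\Nats}$ is Cauchy and converges to some $y \in \widehat{\SS}$ with $\dd(y, a) = f(a)$ for all $a \in F$. Since $\widehat{\SS}$ is also separable and complete, it is isomorphic to $\UU$ by Urysohn's classical characterization (the one-point extension property yields both universality and finite ultrahomogeneity via a back-and-forth argument).

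For the backward direction, assume the completion of $\SS$ is isomorphic to $\UU$, and identify $S$ with a countable dense subset of $\UU$. It suffices, by $\aleph_0$-categoricity of each $T_U^L$, to check that every pithy $\Pi_2$ extension axiom of $T_U^L$ holds in $\M_\SS|_L$. Fix such an $L$, a tuple $\a \in S$ realizing an $L$-type $p$, and a one-point extension $L$-type $q$ over $p$ consistent with $T_U^L$. The type $q$ confines the distance from the new point $y$ to each entry $a_i$ of $\a$ to a nonempty interval $I_i$ determined by finitely many rationals in $L$, and consistency supplies real numbers $d_i \in I_i$ forming a valid metric extension of $\a$. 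Applying the one-point extension property of $\UU$ gives $z \in \UU$ with $\dd(z, a_i) = d_i$, and density of $S$ in $\UU$ supplies some $y \in S$ close enough to $z$ that $\dd(y, a_i) \in I_i$ for every $i$; then $\a$ together with $y$ realizes $q$.

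The main obstacle is the Cauchy sequence lift in the forward direction: one must organize the inductive choices so that each new $y_n$ is simultaneously close to the correct target distances prescribed by $f$ on $F_n$ and close to all previously chosen $y_1, \dots, y_{n-1}$ (by including these in the input tuple and extending the Kat\v{e}tov data coherently at each stage), with tolerances chosen to guarantee both Cauchy convergence and the exact limiting distances; verifying that a consistent choice is always possible amounts to a routine but careful bookkeeping argument using the Kat\v{e}tov extension construction.
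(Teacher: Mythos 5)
Your proposal matches the paper's proof of Theorem~\ref{otherdirection} in both directions. For the direction assuming $\widehat{\SS}\cong\UU$ (your ``backward'' direction), the paper likewise identifies $\SS$ with a dense subset of $\UU$, produces via universality and ultrahomogeneity of $\UU$ (equivalently the one-point extension property you invoke, together with Proposition~\ref{conservative} to realize a consistent one-point $L$-type as an actual metric extension) a witness $b\in\UU$ over the given tuple $\a$, and then exploits the coarseness of a finite $L$ — there is an $\varepsilon>0$ such that any $b'$ within $\varepsilon$ of $b$ realizes the same $L$-type over $\a$ — together with density of $\SS$ in $\UU$ to pull the witness back into $\SS$. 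For the direction assuming $\M_\SS\models T_U$ (your ``forward'' direction), the paper does exactly what you outline: each $\M_\SS|_{L_i}$ is, by $\aleph_0$-categoricity of $T_U^{L_i}$, the \Fr\ limit of finite models of $\TMS\cap\Lww(L_i)$, yielding approximate one-point extensions at every finite resolution; one then runs a Cauchy-sequence induction, approximating the finite configuration in the completion by points of $\SS$ and simultaneously keeping each $b_{h+1}$ close to $b_h$ (the paper chooses explicit tolerances $2^{-(i+6)}$, $2^{-(i+3)}$, etc., to make your ``routine but careful bookkeeping'' precise), so that the limit realizes the exact prescribed distances in $\widehat\SS$; this one-point extension property then gives universality and ultrahomogeneity of $\widehat\SS$ by back-and-forth, hence $\widehat\SS\cong\UU$.
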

\begin{proof}
First suppose that the completion of $\SS$ is 
isomorphic
to  $\UU$.
Without loss of generality, we may assume that $\SS \subseteq \UU$ and
that $\SS$ is dense in $\UU$.
We will show that $\M_\SS$ is a model of $T_U$.

Let $L$ be any finite sublanguage of $\LMS$,
and suppose 
that
\[(\forall \x)(\exists y)\varphi(\x,y) \in T_U\cap \Lww(L).\]
Because $T_U\cap \Lww(L)$ has a pithy $\Pi_2$ axiomatization, it
suffices to
show that 
\linebreak
$(\forall \x)(\exists y)\varphi(\x,y)$
holds in $\M_\SS$.

Fix some $\a \in \M_\SS$ where $|\a|$ is one less than the number of
free variables of $\varphi$, and let $q$ be the quantifier-free $L$-type
of $\a$. We will show that there is 
a
witness to
$(\exists y)\varphi(\a, y)$ in $\M_\SS$.

Because $T_U$ implies the theory of the \Fr\ limit of the class of
finite $L$-structures,
there is some quantifier-free $L$-type $q'(\x, y)$ extending
$q(\x)$ (where $|\x|= |\a|$)
that is
consistent with both $\varphi(\x, y)$ and 
$\TMS\cap \Lww(L)$.

Now, $\UU$ is universal for separable metric spaces, and so there is
some tuple $\c f \in \UU$ such that $q'$ holds of $\M_\CC$ 
(under
the
corresponding order of elements), where $\CC$ is the
substructure of $\UU$ with underlying set $\c f$. As $\UU$ is
ultrahomogeneous and $q$ is the quantifier-free type of $\a$, there must be an automorphism $\sigma$ of $\UU$ such that
$\sigma(\c) = \a$. Define $b \defas \sigma(f)$.
Then
$q'$ holds of $\M_\BB$ (in the corresponding order),
where $\BB$ is the substructure of $\UU$ with underlying set $\a b$.

But no quantifier-free $L$-type can ever
completely determine the distance 
between any two distinct points, as $L$ is finite. Hence
there is
some $\varepsilon>0$ such that 
$q'$ also holds of $\M_\AA$ (in the corresponding order)
whenever
$\AA$ is any finite $(|\a|+1)$-element 
substructure
of $\UU$ that can be put into one-to-one correspondence with $\a b$
in such a way that each element of $\AA$ is
less than $\varepsilon$ away from 
the corresponding
element of $\a b$ and from no other.
By assumption, $\SS$ is dense in $\UU$, 
and so there is some $b'\in \SS$ such that $d_\UU(b, b') < \varepsilon$.
Hence $\M_\SS \models q'(\a, b')$, and so $\M_\SS \models \varphi(\a, b')$,
as desired. 

Conversely, suppose that $\SS$ is a countable metric space such that
$\M_\SS$ is a model of $T_U$. We will show that the completion $\cU$ of
$\SS$ is 
isomorphic
to  $\UU$.

We do this by showing
that for every
finite metric space $\AA$ with underlying set $A \subseteq \cU$ and
metric space $\BB$ extending $\AA$ by some element 
$b$ (not necessarily in $\cU$),
there is some $b'\in \cU$ such that the metric space induced (in $\cU$)
by $A\cup
\{b'\}$ is 
isomorphic
to $\BB$.
From this it follows that if $\sigma$ is an isomorphism from $\AA$ to
another submetric space $\AA'$ of $\cU$, then for every $c\in\cU$, 
there is some
$c'\in\cU$ such that the function that extends $\sigma$ 
by mapping $c$ to $c'$ is also an isomorphism of induced metric spaces.
By a standard back-and-forth argument, this 
implies
the universality and ultrahomogeneity of $\cU$. Hence
$\cU$ is 
isomorphic
to
$\UU$,
as $\UU$ is the unique (up to 
isomorphism) 
universal ultrahomogeneous 
complete separable
metric space.

Let $\AA$ and $\BB$ be as above, and suppose $A = \{a_0, \ldots, a_{n-1}\}$,
where $n = |A|$.
Let $\cU^*$ be any metric space extending $\cU$ by $b$.
and define 
\[
\gamma_j \defas \dd_{\cU^*}(a_j, b)
\] for $0 \le j < n$.
Let $\<L_i\>_{i\in\Nats}$
be an increasing sequence
of finite sublanguages of $\LMS$ 
such that  
for each $i\in\Nats$, 
the language $L_i$ contains enough symbols of the form $d_r$ to imply that
whenever two finite models of $\TMS$, both of
diameter less than twice that of $\BB$, satisfy the same
quantifier-free $L_i$-type (in some order), then each pairwise
distance in the first structure is within $2^{-(i+6)}$ of the corresponding
distance in the second structure.
For each $j$  such that $0\le j\le n-1$, let $\<a_j^i\>_{i\in \Nats}$ be a
Cauchy sequence in $\SS$ that converges to $a_j$ with
\[
\dd_\SS(a_j^i, a_j^{i+1}) \le 2^{-(i+3)}
\]
for $i\in\Nats$.

Consider the inductive claim
that for $h\in\Nats$ we have defined $b_0
\cdots b_h\in \SS$ that satisfy
\[\dd_{\SS}(b_i, b_{i+1}) \le 2^{-i}\] 
for 
$i < h$, and 
\[
\bigl|\dd_\SS(a_j^{i}, b_{i})
- \gamma_j
\bigr| \le 2^{-(i+2)},
\]
for $0 \le j\le n-1$ and
$i \le h$.

If this claim holds for all $h\in\Nats$, then
$\<b_i\>_{i\in\Nats}$ is a Cauchy sequence in $\SS$, which
therefore must converge
to an element $b' \in \cU$. 
Furthermore, 
$
\dd_{\cU}(a_j, b') = \gamma_j
$
for \linebreak $0 \le j\le n-1$,
and so
the metric space induced by $A\cup \{b'\}$ is 
isomorphic
to $\BB$,
as desired.

We now show the inductive claim for $h+1$. Because 
\[
\bigl|\dd_{\cU^*}(a_j^{h}, b)
- \gamma_j
\bigr| \le 2^{-(h+2)} 
\]
for $0 \le j\le n-1$,
and since $\M_\SS|_{L_{h+1}}$ is the \Fr\ limit of the finite models of
$\TMS\cap \Lww(L_{h+1})$,
we can find a $b_{h+1}\in\SS$  satisfying
\[
\bigl|\dd_\SS(a_j^{h}, b_{h+1})
- \gamma_j
\bigr| \le 2^{-(h-1)} 
\]
for $0 \le j\le n-1$.
We may further assume that
$\dd_\SS(b_h, b_{h+1}) \le 2^{-h},$
as there is a finite metric space 
containing such a $b_{h+1}$
that extends the one
induced by $a_0^h, \ldots,
a_{n-1}^h, b_h$. 
Now, 
for $0 \le j\le n-1$, we have
$\dd_{\SS}(a_j^{h}, a_j^{h+1}) \le 2^{-(h+3)}$, and so
$\dd_{\SS}(a_j^{h}, a_j) \le 2^{-(h+1)};$
hence
\[
\bigl|\dd_\SS(a_j^{h+1}, b_{h+1})
- \gamma_j
\bigr| \le 2^{-(h+2)},
\]
and so $b_{h+1}$ satisfies the inductive claim.
\end{proof}

Although
$T_{U}$ is not itself $\aleph_0$-categorical, 
as shown by the examples $D\UU$, it is approximately
$\aleph_0$-categorical.
Let $\alpha\colon \Nats \to \RatNonneg$ be a bijection,
and 
for each $i\in\Nats$
define the 
finite sublanguage
of $\LMS$ to be
\[L_i \defas \{d_{\alpha(j)} \st 0\le j \le i \}.\]

\begin{proposition}
The theory $T_{U}$ is
approximately
$\aleph_0$-categorical
with witnessing sequence $\<L_i\>_{i\in\Nats}$.
\end{proposition}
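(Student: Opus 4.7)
The plan is to verify the three conditions in the definition of approximate $\aleph_0$-categoricity directly for $T_U$ with witnessing sequence $\<L_i\>_{i\in\Nats}$. The first two conditions are essentially bookkeeping: the chain condition $L_i \subseteq L_{i+1}$ is immediate from $L_i = \{d_{\alpha(j)} \st 0 \le j \le i\}$, and $\LMS = \bigcup_{i\in\Nats} L_i$ holds because $\alpha$ is a bijection onto $\RatNonneg$ and $\LMS$ consists precisely of the relations $d_q$ for $q\in\RatNonneg$.

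The substantive task is to show that for each $i\in\Nats$, the theory $T_U \cap \Lww(L_i)$ is $\aleph_0$-categorical. My plan is to identify this theory with $T_U^{L_i}$. One containment, $T_U^{L_i} \subseteq T_U \cap \Lww(L_i)$, is immediate from the definition of $T_U$ as a union that includes $T_U^{L_i}$. For the reverse containment, I would invoke two standard facts about \Fr\ limits in finite relational languages: since the class of finite models of $\TMS \cap \Lww(L_i)$ is an amalgamation class (as noted in the definition of $T_U^{L_i}$) and $L_i$ is finite and relational, its \Fr\ limit has an $\aleph_0$-categorical (hence complete, by \L{}o\'s--Vaught) first-order theory. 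Thus $T_U^{L_i}$ is complete in $\Lww(L_i)$.

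To finish, I would note that $T_U$ is consistent by Proposition~\ref{consistentTu}, so $T_U \cap \Lww(L_i)$ is a consistent extension of the complete theory $T_U^{L_i}$, and therefore coincides with it. Hence $T_U \cap \Lww(L_i) = T_U^{L_i}$ is $\aleph_0$-categorical, as required.

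There is essentially no main obstacle: the argument is a direct unpacking of definitions combined with the standard observation that the theory of a \Fr\ limit in a finite relational language is $\aleph_0$-categorical and complete. The only point to be mildly careful about is ensuring that ``the $\Lww(L_i)$-theory of the \Fr\ limit'' is read as the full first-order theory in $\Lww(L_i)$, so that completeness applies and pins down $T_U \cap \Lww(L_i)$ uniquely.
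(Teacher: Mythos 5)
Your proof is correct and follows essentially the same approach as the paper's, which simply asserts that $T_U \cap \Lww(L_i)$ is the theory of the \Fr\ limit (hence $\aleph_0$-categorical); you have spelled out the identification $T_U \cap \Lww(L_i) = T_U^{L_i}$ via the completeness of $T_U^{L_i}$ and the consistency of $T_U$ (Proposition~\ref{consistentTu}), which is the implicit content of the paper's one-line argument.
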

\begin{proof}
For every $i\in\Nats$, 
the restriction $T_{U}\cap\Lww(L_i)$
is the theory of the \Fr\ limit of all 
finite models of 
$T_{U}\cap\Lww(L_i)$,
hence $\aleph_0$-categorical.
\end{proof}

\begin{proposition}
\label{UrysohnInvMeas}
The theory $T_{U}$ and witnessing sequence $\<L_i\>_{i\in\Nats}$ satisfy the
assumptions of Theorem~\ref{SplittingTypesTheorem}.
Hence there is an $\sym$-invariant probability measure $m_U$ on
$\Models_{\LMS}$ that is concentrated on the class of models of $T_U$ and that assigns
probability $0$ to each isomorphism class.
\end{proposition}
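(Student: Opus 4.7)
The plan is to verify the two hypotheses of Theorem~\ref{SplittingTypesTheorem}, taking each $Q_i$ to be the set of all quantifier-free $L_i$-types consistent with $T_U \cap \Lww(L_i)$.

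For the strong amalgamation hypothesis, the key point is that $\M_{\Rationals\UU}|_{L_i}$ is the unique (up to isomorphism) countable model of $T_U \cap \Lww(L_i)$. Indeed, the proof of Proposition~\ref{consistentTu} shows that $\M_{\Rationals\UU}|_{L_i}$ coincides with the \Fr\ limit of the class of finite models of $T_U \cap \Lww(L_i)$. The full structure $\M_{\Rationals\UU}$ is itself the \Fr\ limit in $\LMS$ of the class of finite metric spaces whose non-zero distances lie in $\Rationals$, and this class is a standard example of a strong amalgamation class; hence $\M_{\Rationals\UU}$ has trivial definable closure in $\LMS$. Passing to a sublanguage can only enlarge the automorphism group, so $\M_{\Rationals\UU}|_{L_i}$ still has trivial definable closure, and, being an ultrahomogeneous structure in a countable relational language, its age has the strong amalgamation property.

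The second step is to verify that $\<Q_i\>_{i\in\Nats}$ has splitting of order $2$. Fix a non-redundant quantifier-free $L_i$-type $q$ with $k \geq 2$ free variables, and realize $q$ as the $L_i$-type of a finite metric space $(S, d)$ on $\{s_1, \ldots, s_k\}$; let $D^- \defas \min_{j\neq\ell} d(s_j, s_\ell) > 0$, and fix some $\varepsilon > 0$ with $\varepsilon < D^-/4$. Define a finite metric space $(T, d')$ on $\{t_j^a \st 1 \leq j \leq k,\ a \in \{0,1\}\}$ by
\[
d'(t_j^0, t_j^1) \defas \varepsilon \qquad \text{and} \qquad d'(t_j^s, t_\ell^t) \defas d(s_j, s_\ell) + (s + 2t)\varepsilon \ \text{~for~} j \neq \ell,
\]
and verify by a short case analysis that the triangle inequality holds whenever $\varepsilon$ is small enough. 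Choose $i' \geq i$ large enough that the sublanguage $L_{i'}$ contains distance predicates $d_r$ separating the four perturbed values $d(s_j, s_\ell) + m\varepsilon$ ($m \in \{0,1,2,3\}$) from one another for each pair $1 \leq j < \ell \leq k$, and let $\qn$ be the $L_{i'}$-type of the tuple $t_1^0, t_1^1, \ldots, t_k^0, t_k^1$ in $(T, d')$. Since $(T, d')$ embeds into $D\UU$ for an appropriate countable dense $D \subseteq \Rplus$, the type $\qn$ lies in $Q_{i'}$. For any $\beta \colon \{1,\ldots, k\} \to \{0,1\}$, the $L_i$-type of $\{t_j^{\beta(j)} \st 1 \leq j \leq k\}$ in $(T, d')$ agrees with $q$, because $\varepsilon$ is smaller than any gap between successive distance thresholds named in $L_i$. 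Moreover, for any $1 \leq i_1 < i_2 \leq k$ and any distinct $\gamma_0, \gamma_1 \colon \{1, 2\} \to \{0, 1\}$, the pairs $(t_{i_1}^{\gamma_0(1)}, t_{i_2}^{\gamma_0(2)})$ and $(t_{i_1}^{\gamma_1(1)}, t_{i_2}^{\gamma_1(2)})$ have different $d'$-distances and hence distinct $L_{i'}$-types, establishing the splitting condition.

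The main technical step is the splitting construction: one has to simultaneously produce four distinct values among the distances $d'(t_{i_1}^s, t_{i_2}^t)$ for each pair $(i_1, i_2)$, preserve the triangle inequality, and keep the $L_i$-type of every $k$-subtuple $\{t_j^{\beta(j)}\}$ equal to $q$. A single uniform perturbation of size $\varepsilon < D^-/4$ handles all three requirements at once. Once both hypotheses are in place, Theorem~\ref{SplittingTypesTheorem} immediately furnishes the desired $\sym$-invariant probability measure $m_U$ on $\Models_{\LMS}$, concentrated on the class of models of $T_U$ and assigning measure $0$ to every isomorphism class.
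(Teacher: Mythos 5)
Your overall plan matches the paper's: verify the two hypotheses of Theorem~\ref{SplittingTypesTheorem} and then invoke it. Your treatment of the strong amalgamation hypothesis is correct and runs through trivial definable closure of $\M_{\Rationals\UU}$, the fact that passing to a sublanguage can only shrink definable closures, and ultrahomogeneity of the reduct $\M_{\Rationals\UU}|_{L_i}$ (from Proposition~\ref{consistentTu}); the paper's own argument is the more direct observation that strong amalgamation in the age of $\M_{\Rationals\UU}$ gives it in the age of the reduct, but both routes land in the same place.

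The splitting construction, however, contains a genuine error: the perturbed $d'$ you write down is not a metric. Reading the formula with the convention $j < \ell$ (without some such convention it is not even symmetric, since $s + 2t \ne t + 2s$ when $s \ne t$), consider the triangle $t_\ell^0, t_\ell^1, t_j^s$. You set $d'(t_\ell^0, t_\ell^1) = \varepsilon$, while $d'(t_j^s, t_\ell^1) - d'(t_j^s, t_\ell^0) = 2\varepsilon$, so the required inequality $\bigl|d'(t_j^s, t_\ell^1) - d'(t_j^s, t_\ell^0)\bigr| \le d'(t_\ell^0, t_\ell^1)$ reads $2\varepsilon \le \varepsilon$. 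This fails for \emph{every} $\varepsilon > 0$, not just for $\varepsilon$ insufficiently small as your proposed ``short case analysis'' asserts, and the bound $\varepsilon < D^-/4$ plays no role in preventing it. The repair is simple --- take $d'(t_j^0, t_j^1) = 2\varepsilon$, or more generally choose perturbation coefficients $c_1, c_2$ with $\{0, c_1, c_2, c_1 + c_2\}$ pairwise distinct and each of $|c_1|, |c_2|$ at most $d'(t_j^0,t_j^1)/\varepsilon$ --- but even then you rely on two further facts you do not state: the realization $(S,d)$ of $q$ must satisfy all triangle inequalities strictly (otherwise a fixed $+m\varepsilon$ perturbation will break a tight triangle for any $\varepsilon > 0$), and no distance $d(s_j, s_\ell)$ may sit exactly on a threshold $\alpha(r)$ with $r \le i$ (otherwise a monotone perturbation changes the $L_i$-type). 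Both are handled by first choosing a realization of $q$ in general position, which is essentially what the paper's proof does implicitly: it simply takes a finite metric space with pairwise distinct positive distances realizing the iterated duplicate $q'$, then chooses $j'$ so that $L_{j'}$ separates those distances. Your instinct to make the perturbation explicit is good, but it needs these corrections.
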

\begin{proof}
For each $i \in \Nats$, 
the countable model of $T_{U}\cap
\Lww(L_i)$ is isomorphic to $\M_{\Rationals\UU}|_{L_i}$. Its age
has the strong amalgamation property, because the age of
$\M_{\Rationals\UU}$ has the strong amalgamation property.

For each $i\in\Nats$, let $Q_i$ be the
set of quantifier-free $L_i$-types that
are consistent with $T_{U}\cap\Lww(L_i)$.
We will show that $\<Q_i\>_{i\in\Nats}$
has splitting of order $2$.
Let
$j\in\Nats$ and $q\in Q_j$. We 
show that there is
some $j' > j$ such that each quantifier-free $L_j$-type with 
two
free
variables  has a splitting in the language $L_{j'}$.

Let $k$ be the number of free variables of $q$.
There is an
iterated duplicate
$q'$ of $q$ having $2k$-many free variables, and there
is some finite metric space $\SS$ whose positive distances are distinct and
such that 
$q'$
holds of $\M_{\SS}$
(under some ordering of the elements of $\M_{\SS}$).
Let $j'> j$ be such that
\[ \{ \alpha(i) \st 0 \le i \le j' \} \]
partitions $\Rationals$ so that each part contains at 
most
one positive
distance occurring in $\SS$. Let $\qn$ be the quantifier-free $L_{j'}$-type
of $\M_\SS$.
Then $\qn$ is a splitting of $q$ of order $2$.
\end{proof}

As with the Kaleidoscope random graphs above, the measure $m_U$ cannot be
obtained via the methods in \cite{AFP}.
This is because
almost every sample from $m_U$ has nontrivial definable
closure, as we now show.
Let $\N$ be a structure sampled from $m_U$, and consider its corresponding
metric space $\W_\N = (\Nats, \dd_\N)$.
Then with probability $1$, 
for $(i, j), (i', j')\in \Nats^2$
satisfying $i < j$ and $i' < j'$, 
we have 
\[\dd_\N(i, j) \neq \dd_\N(i', j')\]
whenever $(i, j) \neq (i', j')$.

Also $m_U$ does not arise from the standard examples of the
form $D \UU$, as
for any two independent samples $\N_0$, $\N_1$ from $m_U$, the sets of real distances
\[
\{ \dd_{\N_w}(i, j) \st i, j \in\Nats \mathrm{~and~} i \neq j\}
\]
for $w \in \{0, 1\}$ are almost surely disjoint (and so
any two independent samples from $m_U$ are almost surely
non-isomorphic --- as we already knew). As a consequence, a sample $\N$ is
almost surely such that $\W_\N$ is not isometric to $D \UU$
for any countable dense set $D\subseteq \Rplus$.

\section{$G$-orbits admitting $G$-invariant probability measures}
\label{GorbitSec}

In this section we characterize,
for certain
Polish groups 
$G$, those transitive Borel $G$-spaces that admit $G$-invariant measures.
In particular, we do
so
for all countable Polish groups and for countable products of
symmetric groups
on a countable (finite or infinite) set.
Throughout this section, let $(G, \cdot)$ be a Polish group.

\subsection{$S_\infty$-actions}

For a countable first-order language $L$,
recall that
$\Models_L$  is the
space of $L$-structures with underlying set  $\Nats$, with $\logicact\colon \sym
\times \Models_L \rightarrow \Models_L$ the logic action of \sym\ on
$\Models_L$ by permutation of the
underlying set.

Also recall that for any formula $\varphi\in \Lwow(L)$ and any $\ell_1, \ldots,
\ell_n\in\Naturals$, we have defined the collection of models
\[
\llrr{\varphi(\ell_1, \ldots, \ell_n)} \defas
\bigl\{\M \in \Models_L \st \M \models {\varphi}(\ell_1, \ldots,
\ell_n)\bigr\}.
\]

The following is an 
equivalent formulation
of
the main result of \cite{AFP}.

\begin{theorem}[{\cite{AFP}}]\label{AFPcor}
Let $(X, \circ)$ be a transitive Borel \sym-space,
and suppose that
$\iota\colon X \to \Models_L$
is a Borel embedding,
where $L$ is some countable language.
Note that the image of $\iota$ is the \sym-space
\[(\{\M\in \Models_L\st \M \cong \M^*\}, \logicact)\]
consisting of 
the orbit
in $\Models_L$
of some countably infinite $L$-structure $\M^*$ under the action of $\logicact$.
Then $X$ admits an \sym-invariant
probability measure if and only if
$\M^*$ has trivial definable closure.
\end{theorem}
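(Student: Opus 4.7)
The plan is to view this equivalence as a direct transport of the main theorem of \cite{AFP} across the equivariant Borel embedding $\iota$. First I would note that since $\iota$ is a Borel injection between standard Borel spaces, the Lusin--Souslin theorem implies that its image is Borel and that $\iota^{-1}\colon \iota(X)\to X$ is Borel; in particular, $\iota$ is a Borel isomorphism of $X$ onto $\iota(X)$. By hypothesis, $\iota(X)$ equals the isomorphism class $\{\M\in\Models_L\st \M\cong\M^*\}$, and, because $\iota$ is a map of Borel $\sym$-spaces, it satisfies the equivariance relation $\iota(g\circ x)=g\logicact \iota(x)$ for all $g\in\sym$ and $x\in X$.

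For the forward direction, suppose $\mu$ is an $\sym$-invariant probability measure on $X$. Define the pushforward $\iota_\ast \mu$ on $\Models_L$ by $(\iota_\ast\mu)(B)\defas\mu(\iota^{-1}(B))$ for Borel $B\subseteq \Models_L$. Then $\iota_\ast\mu$ is a Borel probability measure concentrated on $\iota(X)$, i.e., on the isomorphism class of $\M^*$, and equivariance of $\iota$ immediately implies that $\iota_\ast\mu$ is $\sym$-invariant under $\logicact$. Applying the main theorem of \cite{AFP} to $\iota_\ast\mu$, we conclude that $\M^*$ has trivial definable closure.

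Conversely, suppose $\M^*$ has trivial definable closure. Then \cite{AFP} produces an $\sym$-invariant probability measure $\nu$ on $\Models_L$ concentrated on the isomorphism class of $\M^*$, which is precisely $\iota(X)$. Because $\iota^{-1}$ is Borel on $\iota(X)$, we can pull $\nu$ back to $X$: define $\mu(A)\defas\nu(\iota(A))$ for Borel $A\subseteq X$. Since $\nu$ is concentrated on $\iota(X)$ we get $\mu(X)=\nu(\iota(X))=1$, so $\mu$ is a Borel probability measure on $X$, and equivariance of $\iota$ together with invariance of $\nu$ shows that $\mu$ is $\sym$-invariant under $\circ$.

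The substantive content of the argument lies entirely in the two invocations of the main theorem of \cite{AFP}; everything else is routine transport of a measure along a Borel isomorphism of Borel $\sym$-spaces. The closest thing to an obstacle is the measurability bookkeeping, namely verifying that $\iota$ has Borel image and Borel inverse on that image; this is handled by the Lusin--Souslin theorem, for which see the standard descriptive set theory references (cf.\ \cite[\S2.5]{MR1425877}). Once that is in place, the equivalence between $\sym$-invariant measures on $X$ and $\sym$-invariant measures on $\Models_L$ concentrated on the isomorphism class of $\M^*$ is an immediate consequence of equivariance.
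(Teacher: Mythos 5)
Your proof is correct, and it is essentially the (implicit) argument the paper has in mind: the paper presents Theorem~\ref{AFPcor} as ``an equivalent formulation of the main result of \cite{AFP}'' and gives no separate proof, so the intended content is precisely the transport-of-measure argument you spell out. Your use of Lusin--Souslin to make $\iota$ a Borel isomorphism onto a Borel orbit, followed by pushforward and pullback of invariant measures and two invocations of the \cite{AFP} characterization, is the standard and correct way to realize that equivalence.
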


The following well-known result will be useful in our classification
of transitive Borel $S_\infty$-spaces admitting $S_\infty$-invariant probability measures.

\begin{theorem}[{\cite[Theorem~2.7.3]{MR1425877}}]
\label{universality}
Let $L$ be a countable 
language having relation symbols of
arbitrarily high arity.
Then $(\Models_L, \logicact)$ is a universal Borel \sym-space.
\end{theorem}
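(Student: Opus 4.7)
The plan is to construct, for an arbitrary Borel $\sym$-space $(X,\circ)$, a Borel $\sym$-equivariant injection $\tau\colon X\to\Models_L$. The key structural fact to exploit is that $\sym$ has a neighborhood basis at the identity consisting of the open subgroups $\mathrm{Stab}(F)$, as $F$ ranges over finite subsets of $\Nats$, and that these subgroups act on $X$ in a way that meshes cleanly with the permutation action on $n$-tuples from $\Nats$.

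First I would apply the Becker--Kechris topological realization theorem for Borel $\sym$-actions to replace the given action with a Borel-isomorphic \emph{continuous} $\sym$-action on a Polish space; assume henceforth that $X$ is Polish and $\circ\colon\sym\times X\to X$ is continuous. Next I would produce a countable family $\{B_j\}_{j\in\Nats}$ of Borel subsets of $X$ that separates points and has the property that each $B_j$ is invariant under $\mathrm{Stab}(F_j)$ for some finite $F_j\subseteq\Nats$. The standard tool here is the Vaught transform: starting from a countable basis for the topology of $X$ and iterating the transforms $A^{*F}$ and $A^{\triangle F}$ relative to each finite $F\subseteq\Nats$ produces countably many Borel, $\mathrm{Stab}(F)$-invariant sets, and the basic properties of Vaught transforms together with continuity of the action ensure that the resulting family separates points of $X$.

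I would then take the language $L$ to consist, for each $j\in\Nats$, of a relation symbol $R_j$ of arity $|F_j|$; since the $|F_j|$ are unbounded in $\Nats$, $L$ has relation symbols of arbitrarily high arity, as the hypothesis permits. Writing $F_j=\{f^j_1<\cdots<f^j_{n_j}\}$, I would define $\tau(x)\in\Models_L$ by declaring that $\tau(x)\models R_j(k_1,\ldots,k_{n_j})$ precisely when $k_1,\ldots,k_{n_j}$ are distinct and $g^{-1}\circ x\in B_j$ for some (equivalently, any) $g\in\sym$ with $g(f^j_i)=k_i$ for each $i$; the $\mathrm{Stab}(F_j)$-invariance of $B_j$ makes this independent of the choice of $g$.

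The remaining verifications are that $\tau$ is Borel, $\sym$-equivariant, and injective. Equivariance reduces to an elementary substitution $g'=h^{-1}g$: it gives $\tau(h\circ x)\models R_j(k_1,\ldots,k_{n_j})$ iff $\tau(x)\models R_j(h^{-1}(k_1),\ldots,h^{-1}(k_{n_j}))$, which is exactly $h\logicact\tau(x)\models R_j(k_1,\ldots,k_{n_j})$ by definition of $\logicact$. Borel measurability of $\tau$ follows from that of $\circ$ and of each $B_j$, while injectivity follows from the separating property: if $x\ne y$, some $B_j$ contains one but not the other, so $\tau(x)$ and $\tau(y)$ disagree on whether $R_j(f^j_1,\ldots,f^j_{n_j})$ holds. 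I expect the main obstacle to be the construction of the separating family $\{B_j\}_{j\in\Nats}$ itself, namely producing countably many $\mathrm{Stab}(F)$-invariant Borel sets that collectively separate points; this is the heart of the argument and is where the Vaught-transform machinery, together with the continuity of the action secured by the Becker--Kechris reduction, is essential.
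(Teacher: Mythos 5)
The paper does not prove this theorem; it cites it directly from Becker and Kechris, Theorem 2.7.3. Your proposal correctly reconstructs that source's argument: topological realization to get a Polish space with a continuous action, a countable family of Borel sets each invariant under some basic open subgroup $\mathrm{Stab}(F)$ that separates points, and then the coding of these sets as relations via cosets of stabilizers, with the equivariance, injectivity, and Borel-measurability checks as you describe.

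Two small points worth tightening. First, the theorem asserts universality of $\Models_L$ for an \emph{arbitrary} countable $L$ whose relation arities are unbounded, whereas you build a tailored language $\{R_j\}$; you still need the routine final step of embedding your structure space $\sym$-equivariantly into $\Models_L$ by matching each $R_j$ with a relation symbol of $L$ of arity at least $|F_j|$ (padding the extra coordinates, say, by repeating the last entry). Second, once the Becker--Kechris topological realization has been applied, the separating family is available without invoking the Vaught transforms at all: the sets $W\circ U$, for $W=\mathrm{Stab}(F)$ a basic open subgroup and $U$ a basic open subset of $X$, are open and $W$-invariant (since $W$ is a subgroup, $W\circ(W\circ U)=W\circ U$), and continuity of the action at $(e,x)$ and $(e,y)$ together with Hausdorffness of $X$ give a common $W$ and a basic $U\ni x$ with $W\circ U$ inside an open set disjoint from a neighborhood of $y$. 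So the part you flag as the main obstacle is in fact fairly direct at that stage; the Vaught-transform machinery is doing its real work inside the topological realization theorem you have already invoked.
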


Note that by Theorem~\ref{universality},
for any transitive Borel \sym-space $(X, \circ)$, we can always find 
an embedding $X\to\Models_L$, where $L$ is as in Theorem~\ref{universality}.
Hence Theorem~\ref{AFPcor} provides a complete characterization of
those transitive Borel \sym-spaces admitting \sym-invariant probability measures.
The main result of this section, Theorem~\ref{maintheorem-new},
is a generalization of Theorem~\ref{AFPcor} to the case of invariance under certain products of symmetric groups.

\subsection{Countable $G$-spaces}
We now 
characterize,
for 
countable
groups 
$G$,
those
transitive Borel
$G$-spaces admitting $G$-invariant probability measures.

\begin{lemma}
\label{finiteLem}
Let $(X, \circ)$ be a finite Borel $G$-space. Then $(X, \circ)$ admits a
$G$-invariant probability measure.
\end{lemma}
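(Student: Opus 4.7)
The plan is to produce an explicit $G$-invariant probability measure on the finite space $X$, namely the uniform measure, and verify invariance directly.

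First, I would observe that because $X$ is a finite Borel space (standard Borel, hence with the discrete $\sigma$-algebra), every subset of $X$ is measurable. For each $g \in G$, the map $x \mapsto g \circ x$ is a Borel map $X \to X$ whose Borel inverse is $x \mapsto g^{-1} \circ x$ (using $e \circ x = x$ and associativity of the action). Hence each $g$ acts as a bijection --- equivalently, a permutation --- of the finite set $X$.

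Next, define $\mu$ to be the uniform probability measure on $X$, i.e., $\mu(A) = |A|/|X|$ for every $A \subseteq X$. Since $g$ permutes $X$, the image $g \cdot A$ has the same cardinality as $A$, so $\mu(g \cdot A) = |A|/|X| = \mu(A)$ for every $g \in G$ and every $A \subseteq X$. Thus $\mu$ is $G$-invariant. This establishes the lemma without any need for transitivity (although a slight refinement, averaging the uniform measures on each $G$-orbit of $X$, would also work and in fact would produce an ergodic invariant measure by choosing a single orbit).

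There is essentially no obstacle here: the only subtlety is checking that the action yields genuine permutations on a finite set, which follows immediately from the group axioms encoded in the definition of a Borel $G$-space. The lemma will later be combined with a reduction to finite $G$-spaces when $G$ is countable, so the simplicity of this step is by design.
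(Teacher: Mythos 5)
Your proof is correct and matches the paper's approach: the paper likewise takes the normalized counting measure $\rho_X(A) = |A|/|X|$ and notes it is $G$-invariant. You simply spell out the routine verification (that each $g$ permutes the finite set $X$) that the paper leaves implicit.
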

\begin{proof}
The counting measure $\rho_X$, given by $\rho_X(A) = |A|/|X|$, is $G$-invariant.
\end{proof}
\begin{corollary}
Suppose $G$ is finite. Then every transitive Borel $G$-space admits an invariant probability measure.
\end{corollary}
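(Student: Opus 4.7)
The plan is to observe that transitivity forces $X$ to be finite when $G$ is finite, and then invoke Lemma~\ref{finiteLem}. More precisely, I would fix any point $x_0 \in X$; since the action is transitive, every $y \in X$ is of the form $g \circ x_0$ for some $g \in G$. This means the map $G \to X$ defined by $g \mapsto g \circ x_0$ is surjective, so $|X| \le |G| < \infty$. In particular, $X$ is a finite Borel $G$-space.

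Having established that $X$ is finite, I would then apply Lemma~\ref{finiteLem} directly to conclude that the counting measure $\rho_X$ is a $G$-invariant probability measure on $X$. The argument is essentially a one-liner: finiteness of $G$ together with transitivity immediately bounds the cardinality of $X$.

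There is no real obstacle here; the only subtlety to note is that the orbit map is well-defined regardless of the stabilizer structure of $x_0$, so we do not need to compute $|X| = [G : \mathrm{Stab}(x_0)]$ exactly --- the crude bound $|X| \le |G|$ suffices to invoke the previous lemma.
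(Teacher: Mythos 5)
Your proof is correct and follows the same route as the paper's: use transitivity and the finiteness of $G$ to bound $|X|$, then invoke Lemma~\ref{finiteLem}. You simply make explicit the surjectivity of the orbit map $g \mapsto g \circ x_0$, which the paper leaves implicit.
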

\begin{proof}
Because $G$ is finite, every transitive Borel $G$-space is also finite.
By Lemma~\ref{finiteLem}, every such $G$-space admits a $G$-invariant
probability measure.
\end{proof}

\begin{lemma}
\label{transitiveNoInv}
Let $(X, \circ)$ be a countably infinite transitive Borel $G$-space. Then $(X,
\circ)$ does not admit a $G$-invariant probability measure.
\end{lemma}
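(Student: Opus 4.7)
The plan is to argue by contradiction using the standard fact that on a countable set, a probability measure is determined by its values on singletons, and that $G$-invariance forces these singleton masses to be constant on orbits. Suppose, toward a contradiction, that $\mu$ is a $G$-invariant probability measure on $(X,\circ)$.

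First I would observe that since $X$ is countable, the singletons $\{x\}$ for $x \in X$ are Borel, and $\mu$ is countably additive, so $\mu(X) = \sum_{x \in X} \mu(\{x\})$. Next, I would use transitivity: for any two points $x, y \in X$ there exists $g \in G$ with $g \circ x = y$, so that $\{y\} = g \circ \{x\}$. By $G$-invariance of $\mu$, this yields $\mu(\{y\}) = \mu(\{x\})$. Hence there is a constant $c \ge 0$ such that $\mu(\{x\}) = c$ for every $x \in X$.

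Now I would split into cases. If $c = 0$, then by countable additivity $\mu(X) = 0$, contradicting $\mu(X) = 1$. If $c > 0$, then since $X$ is infinite, $\mu(X) = \sum_{x \in X} c = \infty$, again contradicting $\mu(X) = 1$. Either way we reach a contradiction, so no $G$-invariant probability measure on $(X,\circ)$ exists.

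There is essentially no obstacle here; the only point worth noting is measurability of singletons and orbit images, which is immediate from the fact that $X$ is a countable Borel space (so every subset is Borel) and from the definition of a Borel $G$-action. This lemma, together with Lemma~\ref{finiteLem}, completely settles the countable case, and will be combined with the main inverse-limit construction to handle uncountable transitive Borel $G$-spaces in the sequel.
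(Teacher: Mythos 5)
Your proof is correct and follows essentially the same argument as the paper: use transitivity and $G$-invariance to conclude all singletons have equal measure, then derive a contradiction from countable additivity on the infinite set $X$. Your version is slightly more careful in explicitly splitting the $c=0$ and $c>0$ cases, but this is the same proof.
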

\begin{proof}
Suppose $\mu_X$ is a $G$-invariant probability measure on $(X, \circ)$.
By the transitivity of $X$, for all $x, y\in X$ we must have
$\mu_X(\{x\}) = \mu_X(\{y\})$.  Let $\alpha\defas \mu_X(\{x\})$.
 As $X$ is countable and $\mu_X$ is countably additive, we have
\[1 = \mu_X(X) = \sum_{x \in X}\mu_X(\{x\})= \sum_{x\in X}\alpha.\]
But this is impossible as $X$ is infinite, and so for any non-zero $\alpha$
the right-hand side is infinite.
\end{proof}

\begin{corollary}
Suppose $G$ is countable. Then a transitive Borel $G$-space $X$ admits a
$G$-invariant probability measure if and only if $X$ is finite.
\end{corollary}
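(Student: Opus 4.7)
The proof is essentially a direct combination of the two lemmas preceding it, once we observe that a countable group acting transitively forces the $G$-space to be countable.

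For the ``if'' direction, suppose $X$ is finite. Then $X$ is a finite Borel $G$-space, so Lemma~\ref{finiteLem} provides a $G$-invariant probability measure (the normalized counting measure). Note that transitivity plays no role here.

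For the ``only if'' direction, I would argue by cases on the cardinality of $X$. Fix any $x_0\in X$. Since $(X,\circ)$ is transitive, the map $g\mapsto g\circ x_0$ is a surjection from $G$ onto $X$, so $|X|\le |G|\le\aleph_0$. Hence $X$ is either finite or countably infinite. In the finite case, $X$ admits a $G$-invariant probability measure as just noted, so we are done. In the countably infinite case, Lemma~\ref{transitiveNoInv} directly yields that $(X,\circ)$ admits no $G$-invariant probability measure, completing the contrapositive.

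There is no real obstacle here: the whole argument is a bookkeeping exercise assembling the previous two lemmas, with the only substantive observation being that a countable group has at most countable orbits.
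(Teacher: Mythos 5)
Your proof is correct and matches the paper's argument exactly: the paper likewise observes that countability of $G$ plus transitivity forces $X$ to be countable, and then invokes Lemmas~\ref{finiteLem} and \ref{transitiveNoInv}. You have merely spelled out the surjection $g \mapsto g \circ x_0$, which the paper leaves implicit.
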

\begin{proof}
As $G$ is countable and $X$ is transitive, $X$ must be countable.
The conclusion then follows from Lemmas~\ref{finiteLem} and \ref{transitiveNoInv}.
\end{proof}

\subsection{Products of symmetric groups}
We now consider those groups $G$ that are a countable product of symmetric groups on countable sets.
For such $G$, we will
characterize those transitive Borel $G$-spaces 
that admit
a $G$-invariant probability measure, 
using
the following 
standard result from descriptive set theory.

Recall the definition of $(\MLZ, \logicact^{\M_0})$ from \S\ref{MLZ-sec}.

\begin{theorem}[{\cite[Theorem~2.7.4]{MR1425877}}]
Let $L$ be a countable language and let $L_0$ be a sublanguage of $L$ such that
$L \setminus L_0$ contains relations of arbitrarily high arity.
Let 
$\M_0 \in \Models_{L_0}$.
Then
$\Aut(\M_0)$ is a closed subgroup of \sym,
and $(\MLZ, \logicact^{\M_0})$ is a universal
$\Aut(\M_0)$-space.
\end{theorem}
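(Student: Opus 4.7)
The plan is to prove the two claims separately: first the closedness of $\Aut(\M_0)$ in $\sym$, and then universality of $(\MLZ, \logicact^{\M_0})$ as an $\Aut(\M_0)$-space.

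For the closedness claim, I would fix, for each $n$-ary relation symbol $R \in L_0$ and each tuple $(\ell_1, \ldots, \ell_n) \in \Nats^n$, the set
\[U_{R, \vec{\ell}} \defas \bigl\{g \in \sym \st \M_0 \models R(\ell_1, \ldots, \ell_n) \leftrightarrow \M_0 \models R(g^{-1}(\ell_1), \ldots, g^{-1}(\ell_n))\bigr\}.\]
Membership in $U_{R, \vec{\ell}}$ is determined by $g^{-1}$ restricted to the finite set $\{\ell_1, \ldots, \ell_n\}$, so $U_{R, \vec{\ell}}$ is clopen in the product topology on $\sym$. Analogous clopen conditions encode preservation of constants and functions of $L_0$. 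Since $\Aut(\M_0)$ is the intersection of countably many such clopen sets, it is closed in $\sym$.

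For universality, let $(X, \circ)$ be an arbitrary Borel $\Aut(\M_0)$-space. The aim is a Borel $\Aut(\M_0)$-equivariant injection $\iota \colon X \to \MLZ$. I would adapt the proof of Theorem~\ref{universality}: first produce a countable separating family $\{B_i\}_{i \in \Nats}$ of Borel subsets of $X$, and close it under Vaught transforms $B \mapsto B^{\triangle V}$ where $V$ ranges over a countable neighborhood basis of the identity in $\Aut(\M_0)$ (for example, the pointwise stabilizers in $\Aut(\M_0)$ of finite tuples from $\Nats$, which form a basis since $\Aut(\M_0)$ inherits its topology from $\sym$). The family obtained is countable, separating, and Borel.

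Using the hypothesis that $L \setminus L_0$ contains relations of arbitrarily high arity, I would enumerate distinct symbols $R_i \in L\setminus L_0$ of arities $n_i$ large enough to index both the chosen Borel set and a tuple parametrizing the relevant Vaught transform. Then for $x \in X$ define $\iota(x)$ by declaring $\iota(x)|_{L_0} \defas \M_0$ and, for each $R_i$ of arity $n_i$,
\[\iota(x) \models R_i(\ell_1, \ldots, \ell_{n_i}) \quad \text{iff} \quad x \in B_i^{\triangle V_{\vec{\ell}}},\]
where $V_{\vec{\ell}}$ is the appropriate basic open neighborhood encoded by $\vec{\ell}$. The $L_0$-reduct is literally $\M_0$, so $\iota(x) \in \MLZ$. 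Injectivity follows from the separating property (any two distinct $x, y$ are separated by some $B_i$, hence by the corresponding $R_i$-relation); equivariance follows from the standard identity $(B^{\triangle V})_{g \circ} = (g^{-1} \cdot B)^{\triangle V}$ governing the interaction of Vaught transforms with the group action; Borelness is immediate from the Borel definability of the Vaught transforms.

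The main obstacle is ensuring that the construction of $\{B_i\}$ and the indexing scheme combine to give genuine equivariance rather than merely almost-equivariance; this is the technical heart of the Becker--Kechris machinery, and relies on the careful bookkeeping made possible by having arities of relations in $L\setminus L_0$ unbounded, so that one can simultaneously encode the Vaught transform index and a witnessing finite tuple of $\Nats$ within a single relation.
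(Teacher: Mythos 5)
This theorem is cited from Becker--Kechris (Theorem~2.7.4) without being proved in the paper, so there is no in-paper argument to compare against; your sketch is instead measured against the Becker--Kechris approach it adapts. The overall architecture is right: closedness of $\Aut(\M_0)$ does follow from the clopenness of finite-support conditions as you argue, and universality is obtained by encoding a Borel separating family, closed under Vaught transforms, into the high-arity relations of $L\setminus L_0$.

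The genuine gap is in the equivariance step. For $B^{\triangle V} = \{x : \exists^* h\in V,\; h\cdot x \in B\}$ one has $g^{-1}\cdot B^{\triangle V} = B^{\triangle Vg}$: translating the Vaught transform shifts the open set $V$ on the right, not the Borel set $B$, and this equals $(g^{-1}\cdot B)^{\triangle V} = B^{\triangle gV}$ only when $gV = Vg$. So the identity you cite is wrong as stated, and closing under Vaught transforms over merely a countable neighborhood basis of the identity is not enough --- one must take transforms over all basic open \emph{cosets}, and the parametrization $\vec\ell\mapsto V_{\vec\ell}$ must be chosen so that $V_{\vec\ell}\, g = V_{g^{-1}(\vec\ell)}$ for $g\in\Aut(\M_0)$ (for instance, $V_{\vec\ell} = \{h\in\Aut(\M_0) : h(\ell_i)=i \text{ for all } i\}$). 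Only then does right-translation of the open set match the logic action's relabeling of the index tuple, which is what makes $\iota$ equivariant. You correctly flag this bookkeeping as the technical heart, but the identity as written would not close the argument. You should also assign any non-relational symbols of $L\setminus L_0$ a fixed $\Aut(\M_0)$-invariant interpretation so that $\iota(x)$ is a well-defined element of $\MLZ$.
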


Note that
the $\Aut(\M_0)$-orbit of any structure $\M^*\in\MLZ$ is of the form
\[\Orb_{L_0}(\M^*) \defas \bigl\{\M\in \MLZ \st \M \cong \M^*\bigr\}.\]
We will be interested in the case when $L_0$ is a unary language, 
i.e., consists entirely of unary relations.

For completeness, and to fix notation for later, we now recall basic facts about the relationship between universal
$G$-spaces  and structures in a given language, when $G$ is the product of
symmetric groups.
For the remainder of the 
section,
let $\ell_0, \ell_1, \ldots, \ell_\infty$ 
be
finite or countably infinite,
define 
\begin{align*}
\Ginf &\defas S_\infty^{\ell_\infty} \text{~~and}\\
\Gfin &\defas \prod_{n \in\Nats} S_n^{\ell_n},
\end{align*}
and let $G \defas \Ginf \times \Gfin$.

Define the countable language
\[
L_G \defas  \{U_i^\infty\st 1\le i \le \ell_\infty\}\ \cup \
\bigcup_{n\in\Nats}\{U_i^n\st 1\le i \le \ell_n\}\ \cup \
\{\vinf, \vfin\},
\]
consisting of unary relation symbols.
Consider the 
theory
$T_G \subseteq \Lwow(L_G)$
defined by the axioms
\begin{itemize}
\item $(\forall x)
\neg \bigl(U_i^\infty(x) \And U_j^\infty(x)\bigr)$
whenever
$1 \le i < j \le \ell_\infty$,
\item $(\forall x)
\neg \bigl(U_i^n(x) \And U_j^m(x)\bigr)$
for all $n, m\in \Nats$ and $i,j$ such that  $1 \le i \le
\ell_n$ and $1\le j \le \ell_m$
for which $(i, n) \neq (j, m)$,
\item $(\forall x)
\bigl(
\vfin(x) \leftrightarrow
\bigvee_{n \in\Nats}
\bigvee_{1 \le i \le \ell_n}
U_i^n(x)\bigr)$,
\item $(\forall x) \bigl(\vinf(x) \leftrightarrow
\bigvee_{1\le i \le \ell_\infty}U_i^\infty(x)\bigr)$,
\item $(\forall x)\bigl(\vfin(x) \leftrightarrow \neg
\vinf(x)\bigr)$,
\item
for all $i$ such that $1 \le i \le \ell_\infty$, the set
$\{x\st U_i^\infty(x)\}$ is infinite, and
\item for all $n\in \Nats$ and $i$ such that $1 \le i \le \ell_n$, we have
$|\{x\st U_i^n(x)\}| = n$.
\end{itemize}
These axioms are consistent; in particular, they can be realized by any $L_G$-structure partitioned by the
$U$-relations for which
each $U^\infty$ relation is infinite, each $U^n$ relation
has size $n$, the relation $\vinf$ is the union of all
$U^\infty$-relations, and $\vfin$ is the union of all $U^n$ relations.

Fix some $\AG\in \Models_{L_G}$ that is a model of $T_G$.
For each $U$-relation, 
write \linebreak $\widetilde{U}\defas U^\AG =  \{x\in A\st \AG \models U(x)\}$, and similarly for each $V$-relation.
Let $P(\widetilde{U})$ be the collection of permutations of $\widetilde{U}$.

\begin{lemma}
\label{complicatedM}
The group
$G$ is
isomorphic to the automorphism group of $\AG$.
\end{lemma}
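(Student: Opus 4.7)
The plan is to show that each unary relation of $L_G$ is preserved setwise by any automorphism, so that $\Aut(\AG)$ decomposes as the direct product of the symmetric groups on the individual pieces of the partition.

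First I would observe that because the axioms of $T_G$ force the relations $\{U_i^\infty\}_{1\le i\le \ell_\infty}\cup \{U_i^n\}_{n\in\Nats,\,1\le i\le\ell_n}$ to be pairwise disjoint and to cover the underlying set of $\AG$, they constitute a partition of the underlying set $A$. Any automorphism $\sigma\in\Aut(\AG)$ must preserve each unary relation setwise, so $\sigma(\widetilde{U_i^\infty}) = \widetilde{U_i^\infty}$ for $1\le i\le\ell_\infty$ and $\sigma(\widetilde{U_i^n}) = \widetilde{U_i^n}$ for $n\in\Nats$, $1\le i\le\ell_n$. Hence $\sigma$ is determined by its restriction to each piece, and the restriction-to-pieces map yields a group homomorphism
\[
\Phi\colon \Aut(\AG)\longrightarrow \Bigl(\prod_{1\le i\le\ell_\infty} P(\widetilde{U_i^\infty})\Bigr) \times \Bigl(\prod_{n\in\Nats}\prod_{1\le i \le\ell_n} P(\widetilde{U_i^n})\Bigr).
\]

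Next I would verify that $\Phi$ is an isomorphism of groups. Injectivity is immediate: if $\sigma$ restricts to the identity on every piece of the partition, then (since the pieces cover $A$) $\sigma$ is the identity. For surjectivity, given an arbitrary tuple of permutations of the pieces, glue them together into a single bijection $\sigma\colon A\to A$; because each $U$-relation is a union (in fact equals a single member) of the partition, and $\vinf,\vfin$ are unions of the $U$-relations, this $\sigma$ preserves all the relations of $L_G$ and is therefore an automorphism. Moreover, $\Phi$ is clearly a homomorphism.

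Finally, by the axioms of $T_G$ we have $|\widetilde{U_i^\infty}|=\aleph_0$, so $P(\widetilde{U_i^\infty})\cong S_\infty$; and $|\widetilde{U_i^n}|=n$, so $P(\widetilde{U_i^n})\cong S_n$. Taking the product of these isomorphisms identifies the codomain of $\Phi$ with $S_\infty^{\ell_\infty}\times \prod_{n\in\Nats}S_n^{\ell_n}=\Ginf\times\Gfin=G$, giving $\Aut(\AG)\cong G$. The only subtle point, which should be checked as a remark, is that this is an isomorphism not merely of abstract groups but of Polish groups under the pointwise-convergence topology on $\Aut(\AG)$ and the product topology on $G$; this holds because on each piece the restriction map is continuous with continuous inverse relative to the pointwise topologies. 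No step is a real obstacle here; the content of the lemma is essentially just the observation that unary relations cannot mix the pieces.
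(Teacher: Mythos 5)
Your proof is correct and follows essentially the same approach as the paper's: observe that the $U$-relations partition the domain and that automorphisms preserve each piece, so $\Aut(\AG)$ decomposes as the product $\prod_i P(\widetilde{U_i^\infty})\times\prod_{n,i}P(\widetilde{U_i^n})$, which is identified with $G$. You merely make the injectivity/surjectivity check and the topological remark explicit where the paper leaves them implicit.
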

\begin{proof}
A permutation of $\Nats$ induces an automorphism of $\AG$ if and only if it
preserves each $U$-relation. Hence $\Aut(\AG)$ is isomorphic to
\[
\textstyle
\prod_{1\le i\le \ell_\infty}
P(\widetilde{U_i^\infty})
\times
\prod_{n \in\Nats}
\prod_{1\le i\le \ell_n}
P(\widetilde{U_i^n}).
\]
However, as each
$P(\widetilde{U_i^\infty})$ is isomorphic to \sym, and each
$P(\widetilde{U_i^n})$ is isomorphic to $S_n$, we have that $\Aut(\AG) \cong
G$.
\end{proof}

\begin{lemma}
\label{unaryLem}
Let $L$ be a countable unary language and $\M$ be a countably infinite $L$-structure.
Then $\Aut(\M)$ is isomorphic to a product of symmetric groups.
\end{lemma}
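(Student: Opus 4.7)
The plan is to exhibit $\Aut(\M)$ explicitly as a product of symmetric groups indexed by the fibers of the quantifier-free $1$-type map. Since every symbol of $L$ is unary, the only atomic information attached to a single element $a\in M$ is the set of unary predicates it satisfies, i.e., its quantifier-free $1$-type. First I would define an equivalence relation $\sim$ on the underlying set $M$ by declaring $a\sim b$ when $a$ and $b$ realize the same quantifier-free $L$-type, and let $\{M_\iota\}_{\iota\in I}$ be the collection of $\sim$-equivalence classes. Because $L$ is countable, there are at most countably many quantifier-free $1$-types, so $I$ is countable; and because $M$ is countable, each $M_\iota$ is either finite or countably infinite.

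Next I would observe that any $g\in\Aut(\M)$ must setwise preserve each unary relation $U^\M$, and hence must preserve $\sim$, so $g$ restricts to a permutation of every $M_\iota$. Conversely, given any choice of permutations $g_\iota\in \sympar{M_\iota}$ for $\iota\in I$, the combined map $g\colon M\to M$ defined by $g|_{M_\iota}=g_\iota$ preserves every unary relation (since each $M_\iota$ is either contained in $U^\M$ or disjoint from it, for every unary symbol $U\in L$), so $g\in\Aut(\M)$.

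Thus the assignment $g\mapsto (g|_{M_\iota})_{\iota\in I}$ is a group isomorphism
\[
\Aut(\M) \;\cong\; \prod_{\iota\in I} \sympar{M_\iota},
\]
and each factor $\sympar{M_\iota}$ is a symmetric group on a countable (finite or infinite) set, as desired.

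There is no real obstacle here; the argument is essentially bookkeeping once one identifies the $\sim$-partition. The only minor care required is to note that a unary predicate, being determined by its action on single elements, is either constantly true or constantly false on each $\sim$-class, which is what ensures that arbitrary products of within-class permutations give automorphisms rather than mere bijections of $M$.
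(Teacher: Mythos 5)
Your proof is correct and follows essentially the same approach as the paper: partition $M$ by quantifier-free $1$-type, note that automorphisms are precisely the permutations preserving this partition, and conclude $\Aut(\M)\cong\prod_{Y\in E}S_{|Y|}$. You simply spell out the bookkeeping in a bit more detail than the paper does.
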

\begin{proof}
For $x,y\in \M$, define $x\sim y$ to hold when $x$ and $y$ have the same
quantifier-free $L$-type.
Let $E$ be the collection of $\sim$-equivalence classes.
As $L$ is unary,
the automorphisms of $\M$ are precisely those
permutations of the underlying set of $\M$ that preserve $\sim$.
Hence $\Aut(\M)\cong \prod_{Y \in E} S_{|Y|}$.
\end{proof}

Note that
Lemmas \ref{complicatedM} and  \ref{unaryLem}
imply the standard fact
that the countable products of
symmetric groups on countable (finite or infinite) sets are precisely those groups isomorphic to automorphisms of
structures in countable unary languages.

\subsection{Non-existence of invariant probability measures}

Recall that
$G = \Aut(\AG)$ by
Lemma~\ref{complicatedM}.
For the rest of the section, fix
a countable relational language $L$ that extends $L_G$.

We now classify those orbits in $\MLG$ that admit
an $\Aut(\AG)$-invariant probability measure. 
Then in
particular, if $L \setminus L_G$ has
relations of arbitrarily high arity, then  $\MLG$ will
be a universal $G$-space, and so we will obtain a classification
of those transitive $G$-spaces 
that admit
$G$-invariant probability measures.

Notice that 
in any
structure $\M \in \MLG$, the 
algebraic
closure of the empty set
contains $\vfin^\AG$, which is non-empty precisely when $G$ is not a
countable power of $S_\infty$. 
Hence, when $\vfin^\AG$ is non-empty, $\M$ does not have trivial definable closure.
To deal with this issue, we define the following notion.

\begin{definition}
An $L$-structure $\M\in \MLG$ has \defn{almost-trivial definable
closure} if and only if for every tuple $\aa \in \M$, we have
\[\dcl(\aa \cup \vfin^\AG) = \aa \cup \vfin^\AG .\]
\end{definition}
Note that the analogous notion of almost-trivial algebraic closure
coincides with almost-trivial definable closure, similarly to 
the way that
trivial
definable closure and trivial algebraic closure coincide.
Using this notion, we can now state our main classification.

\begin{theorem}
\label{maintheorem-new}
Let $\M\in \MLG$.
Then $\Orb_{L_G}(\M)$ 
admits
a $G$-invariant probability measure if and only
if $\cM$ has almost-trivial definable closure.
\end{theorem}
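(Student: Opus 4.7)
The proof splits into necessity and sufficiency of almost-trivial definable closure. For necessity, the plan is to argue by contradiction. Suppose $\mu$ is a $G$-invariant probability measure on $\Orb_{L_G}(\M)$ and pick a finite $\aa \subset \M$ and $b \in \dcl_\M(\aa \cup \vfin^{\AG}) \setminus (\aa \cup \vfin^{\AG})$; necessarily $b \in \widetilde{U_i^\infty}$ for some $i \le \ell_\infty$, and this set is infinite. Using a Scott analysis together with a closed-subgroup argument in $\Aut(\M)$ --- exploiting that the pointwise stabilizers of $\aa$ together with finite subsets of $\vfin^{\AG}$ form a neighborhood basis of the identity in $\Aut(\M)$ --- extract a finite subtuple $\f \in \vfin^{\AG}$ and an $\Lwow(L)$-formula $\theta(\xx, \yy, y)$ such that $\theta(\aa, \f, y)$ isolates $b$ uniquely in $\M$. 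Strengthen $\theta$ by a uniqueness clause so that $\theta$ has at most one realization in any $L$-structure, and define
\[E_m^{\aa, \f} \defas \llrrsub{\theta(\aa, \f, m)}{\AG} \qquad \text{for } m \in \widetilde{U_i^\infty} \setminus (\aa \cup \f).\]
The pointwise stabilizer of $\aa \cup \f$ in $G$ acts transitively on $\widetilde{U_i^\infty} \setminus (\aa \cup \f)$ (as $G$ contains the full symmetric group on $\widetilde{U_i^\infty}$), so $G$-invariance forces $\mu(E_m^{\aa, \f})$ constant in $m$; pairwise disjointness of the $E_m^{\aa, \f}$'s then forces each to be $0$. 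On the other hand, since every structure in $\Orb_{L_G}(\M)$ is isomorphic to $\M$, some tuple of the correct quantifier-free $L_G$-type witnesses $(\exists y)\theta$ in each such structure, so countable subadditivity over the countably many candidate tuples forces $\mu\bigl(\bigcup_m E_m^{\aa^\circ, \f^\circ}\bigr) > 0$ for at least one $\aa^\circ, \f^\circ$; $G$-equivariance (moving $\aa^\circ, \f^\circ$ to $\aa, \f$ via an element of $G$ preserving $L_G$-types) transfers this positivity to the chosen $\aa, \f$, contradicting the previous conclusion.

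\smallskip

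\noindent For sufficiency, the plan is to run the main construction of Section~\ref{invlimitconstruction} on appropriately expanded data and then symmetrize over the finite part of $G$. Let $\sigma_\M \in \Lwow(L)$ be the Scott sentence of $\M$, let $A$ be a countable admissible set with $L \cup \{\sigma_\M\} \subseteq A$, and apply Lemma~\ref{morleyization} to obtain the language $L_A$ and the pithy $\Pi_2$ theory $\fT_A$. Let $L^+ \defas L_A \cup \{c_d \st d \in \vfin^{\AG}\}$ with $C_0 \defas \vfin^{\AG}$ and $\cC_0(c_d) \defas d$, and let $T^+$ consist of $\fT_A$ together with $(\forall x) R_{\sigma_\M}(x)$ and the $L_G$-diagram of $\AG$ expressed through the constants $c_d$ (so that the unique expansion $\M^+$ of $\M$ interpreting $c_d$ as $d$ is the generic model). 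Stratify $T^+$ and $L^+$ into an increasing sequence of quantifier-free complete pithy $\Pi_2$ theories $\<T_i\>$ in languages $\<L_i\>$ via Scott-rank truncations combined with the Morleyization, and take $Q_i$ to be the collection of all non-constant complete quantifier-free $L_i$-types consistent with $T_i$. Conditions (W), (E), (C) of the construction then hold automatically, while the crucial condition (D) follows because naming each $d \in \vfin^{\AG}$ as a constant converts the almost-trivial dcl of $\M$ into genuine trivial dcl (over $C_0$) of the non-constant part of $\M^+$, which by the standard correspondence (see \S\ref{Frlimits}) is equivalent to strong amalgamation and hence yields duplication of types. The construction of Section~\ref{invlimitconstruction} then produces an $\sym^{C_0}$-invariant probability measure $\nu^+$ on $\Models_{\cC_0, L^+}$ concentrated on models of $T^+$; by the Scott-sentence axiom the $L$-reducts of these models are isomorphic to $\M$. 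Identifying $\Nats \cup C_0$ with $\Nats$ via a bijection mapping $C_0$ onto $\vfin^{\AG} \subset \Nats$ (respecting the partition into $U$-relations) pushes $\nu^+$ forward to a measure $\tilde\nu$ on $\MLG$ concentrated on $\Orb_{L_G}(\M)$, invariant under the copy of $\Ginf$ in $G$ fixing $\vfin^{\AG}$ pointwise. Finally, average $\tilde\nu$ over the compact profinite group $\Gfin = \prod_{n \in \Nats} S_n^{\ell_n}$ using its normalized Haar measure:
\[\mu \defas \int_{\Gfin} (g \logicact^{\AG} \tilde\nu)\, dg.\]
Since $\Ginf$ and $\Gfin$ commute within $G = \Ginf \times \Gfin$, the measure $\mu$ is $G$-invariant and still concentrated on the $G$-invariant set $\Orb_{L_G}(\M)$.

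\smallskip

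\noindent The hardest step is verifying, in the sufficiency direction, that almost-trivial definable closure yields the duplication-of-types condition (D). One must check that naming the elements of $\vfin^{\AG}$ as constants genuinely converts almost-trivial dcl in $\M$ into trivial dcl (over $C_0$) in the non-constant part of $\M^+$, and that this translates faithfully through the Morleyized theory $\fT_A$ and the Scott-sentence axiom into the strong-amalgamation structure required for (D). A secondary technical point in the necessity direction is the reduction from $\dcl(\aa \cup \vfin^{\AG})$ to $\dcl(\aa \cup F)$ for some finite $F \subset \vfin^{\AG}$, which requires a Polish-group argument about closed subgroups of $\Aut(\M)$.
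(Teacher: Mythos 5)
The backward (sufficiency) direction of your proposal is essentially the paper's proof of Proposition~\ref{backward}, with some details made more explicit. You name the constants $c_d$ and the set $C_0 = \vfin^\AG$ explicitly, whereas the paper passes to $\Models_{\cC_0, L}$ a bit tersely; your stratification of $T^+$ into an increasing chain is unnecessary (the paper just takes the constant sequence $L_i = L_A$, $T_i = T_A$) but harmless; and your Haar averaging over the compact group $\Gfin$ does the same work as the paper's Lemma~\ref{bijectionLemma}, which performs the averaging by hand. Your identification of condition (D) as the crux matches the paper's treatment.

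The forward (necessity) direction, however, deviates from the paper's Proposition~\ref{forward} and contains a genuine gap. The paper works directly with a formula $p(\x, y)$ generating the principal $\Lwow(L)$-type of $\aa b$ over the empty set, with no extra parameters from $\vfin^\AG$. Your proposal instead tries to produce a finite subtuple $\f \subseteq \vfin^\AG$ and a formula $\theta(\aa, \f, y)$ isolating $b$ uniquely, justified by the claim that the pointwise stabilizers $\Aut(\M)_{\aa \cup F}$, over finite $F \subseteq \vfin^\AG$, form a neighborhood basis of the identity in $\Aut(\M)$. That claim is false: a neighborhood basis of the identity consists of pointwise stabilizers of arbitrary finite subsets of $\Nats$, and the family you cite intersects only to $\Aut(\M)_{\aa \cup \vfin^\AG}$, which can be a large (non-compact) subgroup. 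Consequently $\Aut(\M)_{\aa \cup \vfin^\AG} \subseteq \Aut(\M)_b$ does not imply $\Aut(\M)_{\aa \cup F} \subseteq \Aut(\M)_b$ for any finite $F$; that is, $b$ may lie in $\dcl(\aa \cup \vfin^\AG)$ without lying in $\dcl(\aa \cup F)$ for any finite $F \subseteq \vfin^\AG$. A concrete obstruction: take $\ell_\infty = 1$ and $\ell_2 = \aleph_0$, so $\vfin^\AG = \bigsqcup_i \widetilde{U_i^2}$ with each $\widetilde{U_i^2}$ of size $2$, and add a binary relation $R$ so that a designated countably infinite $B \subseteq \widetilde{U_1^\infty}$ is coded by ``addresses'': each $b \in B$ is $R$-linked to exactly one element of each $\widetilde{U_i^2}$, with all such addresses lying in a single eventual-agreement class. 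Then each $b \in B$ is fixed by every automorphism fixing $\vfin^\AG$ pointwise (so lies in $\dcl(\vfin^\AG)$), yet automorphisms fixing any finite part of $\vfin^\AG$ can still move $b$, and in fact the $\Aut(\M)$-orbit of $b$ over the empty set is infinite. Thus no finite $\f$ and no formula $\theta$ with at most one realization can exist, and the machinery you build on that formula — the disjoint sets $E_m^{\aa,\f}$, the transitivity of the stabilizer, the subadditivity step — is unavailable. You should instead follow the paper's route of working with the type $p(\x, y)$ without parameters from $\vfin^\AG$, and be careful there too: the argument relies on a disjointness of the sets $\llrrAG{p(\m\,n)}$ which is delicate precisely in configurations like the one above.
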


We will prove Theorem~\ref{maintheorem-new}
in two steps. We 
prove
the forward direction in
Proposition~\ref{forward}. This argument is very similar to an analogous result
in \cite{AFP}, 
but
we include it here for completeness. 
In
Proposition~\ref{backward}, we prove the reverse direction.

\begin{proposition}
\label{forward}
Let $\M\in \MLG$, and suppose
that $\Orb_{L_G}(\M)$ 
admits
a \linebreak $G$-invariant probability measure.
Then $\cM$ has almost-trivial definable closure.
\end{proposition}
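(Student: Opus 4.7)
The plan is to prove the contrapositive, extending the forward direction of Theorem~\ref{AFPcor} (the trivial-dcl characterization from \cite{AFP}) to handle the finite orbits coming from the $S_n^{\ell_n}$ factors of $G$, which must be tracked separately from the infinite $S_\infty$-orbits on $\vinf^\AG$. So suppose $\M \in \MLG$ does not have almost-trivial definable closure. Then there is a finite tuple $\aa \in \M$ and an element $b$ with $b \in \dcl(\aa \cup \vfin^\AG) \setminus (\aa \cup \vfin^\AG)$, computed in $\M$; since $b \notin \vfin^\AG$, necessarily $b \in \vinf^\AG$, say $b \in \widetilde{U_i^\infty}$ for some $i$. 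Because definable closure of any set in a countable structure is witnessed by a finite parameter set together with an $L_{\omega_1\omega}(L)$-formula, first I would fix a finite tuple $\bb$ of elements of $\vfin^\AG$ and an $L_{\omega_1\omega}(L)$-formula $\psi(\x, \y, y)$ such that $\M \models \forall y(\psi(\aa, \bb, y) \leftrightarrow y = b)$.

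Next, assuming toward a contradiction that $\mu$ is a $G$-invariant probability measure on $\Orb_{L_G}(\M)$, I would introduce the closed subgroup $H \le G$ consisting of elements fixing $\aa \cup \bb$ pointwise. The subgroup $H$ acts on $\widetilde{U_i^\infty}$ as the full group of permutations fixing $\widetilde{U_i^\infty} \cap \aa$ pointwise---the elements of $\bb$ lie in $\vfin^\AG$ and so do not constrain the $S_\infty$-factor acting on $\widetilde{U_i^\infty}$---so the $H$-orbit of $b$ is exactly $\widetilde{U_i^\infty} \setminus \aa$, which is infinite. For each $c \in \Nats$, define the Borel set
\[
E_c \defas \llrrsub{\forall y(\psi(\aa, \bb, y) \leftrightarrow y = c)}{\AG} \cap \Orb_{L_G}(\M).
\]
These sets are pairwise disjoint, and by the definition of $\logicactcz$ we have $h \logicactcz E_c = E_{h(c)}$ for each $h \in H$, so $H$-invariance of $\mu$ gives $\mu(E_c) = \mu(E_{h(c)})$. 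Since the $H$-orbit of $b$ is infinite and the corresponding $E_c$ are pairwise disjoint with total mass at most $1$, each has measure $0$. Moreover, any $\N \in \Orb_{L_G}(\M)$ is $L$-isomorphic to $\M$, and every such isomorphism preserves $\widetilde{U_i^\infty}$ (by preservation of the $L_G$-reduct), so any unique witness to $\psi(\aa, \bb, y)$ in $\N$ must lie in $\widetilde{U_i^\infty} \setminus \aa$. Hence $\mu\bigl(\bigsqcup_c E_c\bigr) = 0$.

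The final step---and this is the main obstacle---is to produce a positive lower bound on $\mu\bigl(\bigsqcup_c E_c\bigr)$ by an exchangeability argument modeled on \cite{AFP}, yielding the required contradiction. My plan is to replace $\aa$ by a longer tuple obtained by adjoining elements of $\vinf^\AG$ (possible since $\vinf^\AG$ is infinite in $\M$) so that the $\Aut(\M)$-orbit of the extended tuple $(\aa, \bb, b)$ is infinite, which forces $\mu$-a.e.\ $\N$ to realize the corresponding $L_{\omega_1\omega}$-type at infinitely many tuples. Then $G$-invariance of $\mu$, applied across all tuples $(\aa', \bb')$ in the $G$-orbit of $(\aa, \bb)$ in $\AG$, combined with a counting/Fubini argument comparing the expected number of type-realizations in $\N \sim \mu$ to the (infinite) count in $\M$, would force each summand positive, giving $\mu\bigl(\bigsqcup_c E_c\bigr) > 0$. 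The hard part will be the exchangeability bookkeeping in this mixed-orbit setting, in which finite $G$-orbits (coming from $\vfin^\AG$) and infinite $G$-orbits (coming from $\vinf^\AG$) must be handled separately; the argument mirrors that of \cite{AFP}, with additional care for the finite factors $S_n^{\ell_n}$.
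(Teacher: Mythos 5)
Your plan diverges from the paper's (which works with a parameter-free Scott formula $p(\x y)$ of $\aa b$), and there are two genuine gaps. The first is in your opening move: you assert that $b \in \dcl(\aa \cup \vfin^\AG)$ is witnessed by a \emph{finite} tuple $\bb$ of elements of $\vfin^\AG$ together with an $\Lwow(L)$-formula $\psi$. When $\vfin^\AG$ is infinite, definable closure over it need not have this finite character. For instance, take $\ell_2 = \aleph_0$ so that $\vfin^\AG = \bigcup_i \widetilde{U_i^2}$ with $\widetilde{U_i^2} = \{v_i, w_i\}$, and add a binary relation $S$ assigning to each $y \in \widetilde{U_1^\infty}$ a finite-support bit-sequence $f_y \in \{0,1\}^\Nats$ via $S(v_i, y) \leftrightarrow f_y(i) = 0$; with $\aa = \emptyset$ and $b$ the element with $f_b \equiv 0$, we have $b \in \dcl(\vfin^\AG)$, yet for every finite $\bb \subseteq \vfin^\AG$ the orbit of $b$ over $\bb$ is infinite (automorphisms swapping $v_i \leftrightarrow w_i$ for $i$ outside $\bb$ move $b$), so no such finite $\bb$ and $\psi$ exist. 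To define $b$ over $\vfin^\AG$ one needs all of $\vfin^\AG$ as parameters (e.g.\ as constants), or---as the paper does---one must avoid parameters from $\vfin^\AG$ entirely.

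The second gap is the positivity $\mu\bigl(\bigsqcup_c E_c\bigr) > 0$, which you leave as a plan and flag yourself as the main obstacle. The difficulty is precisely that $\psi(\aa, \bb, \cdot)$ is pinned to the specific naturals $\aa, \bb$: there is no a priori reason that a $\mu$-positive set of $\N \in \Orb_{L_G}(\M)$ should admit a unique witness to $\psi(\aa, \bb, y)$, since the $L$-isomorphisms from $\M$ to members of the orbit need not carry $\aa, \bb$ anywhere useful. The paper's proof gets positivity immediately by noting that $(\exists \x y)\,p(\x y)$ holds in \emph{every} structure of $\Orb_{L_G}(\M)$, then specializing via countable additivity to some tuple $\m$ with $\mu\bigl(\llrrAG{(\exists y)\,p(\m y)}\bigr) > 0$, and finishing the count using $\Ginf$-invariance; your sketched counting/Fubini argument would have to reconstruct this, and enlarging $\aa$ by adjoining elements of $\vinf^\AG$ does not obviously suffice. (A minor notational slip: the action you want on $\MLG$ is the relativized $\logicact^{\AG}$, not $\logicactcz$, which is defined for the constant-set setup $\Models_{\cC_0,L}$.)
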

\begin{proof}
Let $\mu$ be a $G$-invariant probability measure on
$\Orb_{L_G}(\M)$, and
suppose that there 
is
a finite tuple $\aa \in \cM$ such that
\[b \in \dcl(\aa \cup \vfin^{\AG}) \setminus (\aa \cup \vfin^{\AG}). \]

Let 
$p(\x y)$ be a formula that generates a (principal)
$\Lwow(L)$-type of $\aa b$, i.e., a formula of $\Lwow(L)$ 
with free variables $\x y$
such that for any
$\Lwow(L)$-formula
$\psi$
whose free variables are among  $\x y$,
either
\[\models (\forall \x)(\forall y) \bigl(
p(\x y) \to \psi (\x y)\bigr)
\qquad \text{or}
\qquad
\models (\forall \x)(\forall y) \bigl( p (\x y)
\to \neg \psi (\x y) \bigr).
\]

Because $\M \models (\exists \x y)\, p(\x y)$, the measure $\mu$ is concentrated on
$\llrrAG{(\exists \x y)\, p(\x y)}$.
By the countable additivity of $\mu$, there is some $\m \in \Nats$ such that 
$\mu\bigl(\llrrAG{(\exists y)\, p(\m y)} \bigr) > 0$.

Now, $b\not \in \vfin^{\AG}$,
and so $b \in \vinf^{\AG}$.
Hence we must have $\cM\models U_k^\infty(b)$ for
some $k$ such that $1 \le k \le \ell_\infty$.
Let
\[
F \defas \{n^*\in\Nats\st \AG \models U_k^\infty(n^*) \text{~and~} n^*\not\in\m\}.
\]

As $b \not \in \aa$,  note that 
$
\llrrAG{(\exists y)\, p(\m y)}
=
\bigcup_{n\in F} \llrrAG{p(\m n)}
$.
Because 
$
b \in \dcl(\aa \cup \vfin^{\AG}) \setminus (\aa \cup \vfin^{\AG}), 
$
for any distinct $n_0, n_1 \in F$ we have
$
\llrrAG{p(\m n_0)}
\cap
\llrrAG{p(\m n_1)} 
= \emptyset,
$
and so
$
\mu \bigl (\llrrAG{(\exists y)\, p(\m y)} \bigr )
=
\sum_{n^*\in F} \mu \bigl (\llrrAG{p(\m n^*)} \bigr ).
$

By countable additivity, there is some $n\in F$ such that 
$
\alpha \defas
\mu\bigl(\llrrAG{p(\m n)}\bigr) > 0.
$
Further, by the definition of $F$, for every $n^*\in F$ there is some $g \in G$ such that 
\linebreak
$
g(\m n) = \m n^*
$
and $g$ fixes $\vfin^\AG$.
As $\mu$ is $G$-invariant, for all $n^*\in F$ we have
\linebreak
$
\mu\bigl(\llrrAG{p(\m n^*)}\bigr) 
=
\mu\bigl(\llrrAG{p(\m n)}\bigr),
$
and so 
$
\mu \bigl (\llrrAG{(\exists y)\, p(\m y)} \bigr )
=
\sum_{n^*\in F}\alpha.
$
This is a contradiction, as 
$\alpha> 0$
and
$F$ is 
infinite.
\end{proof}

This concludes the forward direction of
Theorem~\ref{maintheorem-new}.

\subsection{Constructing the invariant probability measure}

The reverse direction of
Theorem~\ref{maintheorem-new} will use the construction in Section~\ref{invlimitconstruction}
analogously to the way in which
the main construction in
\cite{AFP} 
is used
to classify those
transitive $\sym$-spaces admitting
$\sym$-invariant probability measures.

\begin{lemma}
\label{bijectionLemma}
Let $\M \in \MLG$, and suppose that
$\mu$ is a $\Ginf$-invariant probability measure on
$\Orb_{L_G}(\M)$.
Then there is a $G$-invariant probability measure
$\mufin$
on $\Orb_{L_G}(\M)$.
\end{lemma}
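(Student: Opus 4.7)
The plan is to average $\mu$ over the compact group $\Gfin$ using its Haar measure. Note that $\Gfin = \prod_{n \in \Nats} S_n^{\ell_n}$ is a countable product of finite groups, hence is a compact Polish group. Let $\nu$ denote the normalized Haar probability measure on $\Gfin$. For each Borel subset $A \subseteq \Orb_{L_G}(\M)$, define
\[
\mufin(A) \defas \int_{\Gfin} \mu(g \circ A)\, d\nu(g).
\]
Here $g \circ A$ means the image of $A$ under the relativized logic action of $g$ (viewing $g$ as an element of $\Aut(\AG) \cong G$); since $\Orb_{L_G}(\M)$ is preserved by the $G$-action, $\mufin$ is naturally a measure on $\Orb_{L_G}(\M)$.

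First I would verify that $g \mapsto \mu(g \circ A)$ is Borel-measurable, which follows from the fact that the logic action $G \times \MLG \to \MLG$ is Borel together with the fact that $\mu$ is a Borel probability measure. Next I would check that $\mufin$ is a probability measure: it is non-negative, countably additive (by dominated convergence, since $\mu(g\circ A) \leq 1$ uniformly), and $\mufin(\Orb_{L_G}(\M)) = \int_{\Gfin} 1\, d\nu(g) = 1$ because each $g$ fixes $\Orb_{L_G}(\M)$ setwise.

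The key step is establishing $G$-invariance in two parts. For $\Gfin$-invariance, if $h \in \Gfin$, then by the translation-invariance of Haar measure,
\[
\mufin(h \circ A) = \int_{\Gfin} \mu(g\circ h \circ A)\, d\nu(g) = \int_{\Gfin} \mu((gh)\circ A)\, d\nu(g) = \int_{\Gfin} \mu(g \circ A)\, d\nu(g) = \mufin(A).
\]
For $\Ginf$-invariance, I would use the crucial observation that every element of $\Ginf$ commutes (as a permutation in $\Aut(\AG) \leq \sym$) with every element of $\Gfin$: this is because $\Ginf$ permutes only the blocks $\widetilde{U_i^\infty} \subseteq \vinf^\AG$ (fixing $\vfin^\AG$ pointwise) while $\Gfin$ permutes only the blocks $\widetilde{U_i^n} \subseteq \vfin^\AG$ (fixing $\vinf^\AG$ pointwise), and these two sets are disjoint. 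Hence for $h \in \Ginf$,
\[
\mufin(h \circ A) = \int_{\Gfin} \mu(g \circ h \circ A)\, d\nu(g) = \int_{\Gfin} \mu(h \circ g \circ A)\, d\nu(g) = \int_{\Gfin} \mu(g\circ A)\, d\nu(g) = \mufin(A),
\]
where the final equality uses the $\Ginf$-invariance of $\mu$. Combining, $\mufin$ is invariant under all of $G = \Ginf \times \Gfin$.

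The main thing to be careful about is the measurability of $g \mapsto \mu(g \circ A)$; everything else is routine given compactness of $\Gfin$ and the product structure of $G$. Measurability is standard for Borel actions of Polish groups on standard Borel spaces, so the proof reduces to the commutativity observation and Fubini-style manipulations as above.
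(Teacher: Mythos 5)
Your proof is correct, and at bottom it is the same averaging-over-$\Gfin$ argument as the paper's; the difference is in packaging. The paper defines $\mufin$ via a sampling procedure: draw $\N^*$ from $\mu$, then independently draw each $\sigma_i^n$ uniformly from $S_n$ and let these act on the finite blocks. This is exactly drawing a $\Gfin$-element from the Haar (product-of-uniform) measure $\nu$ and pushing $\mu$ forward, so your integral formula $\mufin(A) = \int_{\Gfin}\mu(g\circ A)\,d\nu(g)$ is the distributional restatement of their construction. Where you then invoke translation-invariance of Haar measure to get $\Gfin$-invariance, the paper does the equivalent concretely by expressing $\mufin\bigl(\llrrAG{R(h_1,\dots,h_j)}\bigr)$ as a uniform average over the finite group $G_{\{h_1,\dots,h_j\}}$ and noting that any $g^*\in\Gfin$ agrees with some $g\in G_{\{h_1,\dots,h_j\}}$ on the relevant tuple. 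And where you appeal to the commutativity of $\Ginf$ and $\Gfin$ inside $\Aut(\AG)$ (disjoint supports on $\widetilde{\vinf}$ and $\widetilde{\vfin}$), the paper uses the same fact in the form $G_{\{f(h_1),\dots,f(h_j)\}} = G_{\{h_1,\dots,h_j\}}$ for $f\in\Ginf$, then applies $\Ginf$-invariance of $\mu$ exactly as you do. Your version is cleaner and outsources the work to Haar measure theory; the paper's version is more self-contained and never needs to introduce the Haar measure on an infinite product, since only the finite truncation $G_Y$ matters for measuring any basic set. Your remark on measurability of $g\mapsto\mu(g\circ A)$ is the right thing to flag, and it does go through by the standard Fubini argument for Borel actions of Polish groups on standard Borel spaces.
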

\begin{proof}
First note that, for each
$n\in \Nats$ and $1\le i\le \ell_n$,
there is a unique order-preserving bijection
\[\iota_i^n\colon \widetilde{U_i^n} \to \{1, \ldots, n\}.\]
Recall that 
these
relations $\widetilde{U_i^n}$, along with 
$\widetilde{U_i^\infty}$,
partition $\AG$.
Define the maps
\begin{align*}
\alpha&\colon \Nats \to \Nats \text{~~and}\\
\beta&\colon \Nats \to \Nats \cup \{\infty\}
\end{align*}
to
be such that for all $n\in\Nats$,
\[ \AG \models U_{\alpha(n)}^{\beta(n)}(n)
.
\]
For every finite subset $Y\subseteq \Nats$, let
\[Y^*\defas \bigcup_{y\in   Y}  \widetilde{U_{\alpha(y)}^{\beta(y)}}.
\]
Further, define the finite group
\[G_Y \defas \prod_{a,b \in\Nats} \bigl\{ S_b
\st
(\exists y \in Y)\, (\alpha(y) = a \text{~and~} \beta(y) =b )
\bigr
\}.
\]
In other words, $G_Y$ 
contains the product of
$\bigl|\{\alpha(y)\st y \in Y \text{~and~} \beta(y) = b\} \bigr|$-many
copies of $S_{b}$.

There is a natural action of $G_Y$ on $Y^*$ that
fixes $\widetilde{\vinf}$ pointwise, and uses the $\alpha(y)$-th copy of
$S_{\beta(y)}$ to permute $\widetilde{U_{\alpha(y)}^{\beta(y)}}$.

We will define $\mufin$ via a
sampling procedure.
Begin by sampling an element \linebreak $\N^* \in \Orb_{L_G}(\M)$
according to $\mu$.
Next, for each
unary relation  $U_i^n$ where $n\in \Nats$ and $1\le i\le \ell_n$,
independently select an element $\sigma_i^n$ of $S_n$, uniformly at random.
Finally, let 
$\mufin$
be the distribution of the
structure $\N\in\MLG$ defined as follows.
For every relation symbol $R\in L$ and every $h_1, \ldots, h_j\in\Nats$, where $j$
is the arity of $R$, let
\[
\N \models R(h_1, \ldots, h_j)
\quad \text{iff} \quad
\N^* \models R(h_1^*, \ldots, h_j^*),
\]
where for $1 \le p \le j$,
when $h_p^*\in \widetilde{U_i^n}$ for some $n\in\Nats$ and $i$ such that $1\le i\le \ell_n$, we have
\[\bigl((\iota_i^n)^{-1}\sigma_i^n\iota_i^n\bigr)\bigl(h_p^*\bigr) = h_p,
\]
and when $h_p^*\in \widetilde{\vinf}$, we have $h_p^* = h_p$. Now, $\N$ is
almost surely isomorphic to $\N^*$ via the isomorphism that is the identity
on $\widetilde{\vinf}$ and is $(\iota_i^n)^{-1}\sigma_i^n\iota_i^n$ on each
$\widetilde{U_i^n}$.
Thus
$\mufin$ is is a measure on
$\Orb_{L_G}(\M)$, as claimed.

We now show that the probability measure $\mufin$ is $\Gfin$-invariant.
Because, in the definition of $\mufin$, each finite permutation
$\sigma_i^n$ was selected uniformly independently from $S_n$, we have
\[\mufin\bigl(\llrrAG{R(h_1, \ldots, h_j)}\bigr) =
\frac1{\bigl| G_{\{h_1, \ldots, h_j\}}\bigr|}
\,
{\sum_{g\in G_{\{h_1, \ldots, h_j\}}} \mu\Bigl(
\bigllrrAG{R\bigl(g(h_1), \ldots, g(h_j)\bigr)}
\Bigr)}
,
\]
where each $g \in G_{ \{ h_1, \ldots, h_j \} } $
acts on each $h_p$ (for $1\le p\le j$) as described above.

Note, however, that  for all $g^* \in \Gfin$, there is some $g \in
G_{\{h_1, \ldots, h_j\}}$ such that  the actions of $g$ and $g^*$  agree on
$\{h_1, \ldots, h_j\}$.
Hence
\[\mufin\Bigl(\bigllrrAG{R\bigl(g^*(h_1), \ldots, g^*(h_j)\bigr)}\Bigr)
=
\mufin\bigl(\llrrAG{R(h_1, \ldots, h_j)}\bigr),
\]
and so $\mufin$ is $\Gfin$-invariant.

Recall that $\mu$ is $\Ginf$-invariant. We now show that $\mufin$ is also
$\Ginf$-invariant, so that
$\mufin$ is invariant under $G = \Ginf \times \Gfin$, as desired.
Let $f\in \Ginf$, let $R\in L$ be a relation symbol, and let $j$ be the
arity of $R$. We now show that, for all $h_1, \ldots, h_j \in \Nats$,
\begin{eqnarray*}
&&
\hspace{-36pt}
\mufin\Bigl(\bigllrrAG{R\bigl(f(h_1), \ldots, f(h_j)\bigr)}\Bigr)
\\
&=& \frac1{\bigl| G_{\{f(h_1), \ldots, f(h_j)\}}\bigr|}
\,
{\sum_{g\in G_{\{f(h_1), \ldots, f(h_j)\}}} \mu\Bigl(
\bigllrrAG{R\bigl(g(f(h_1)), \ldots, g(f(h_j))\bigr)}
\Bigr)}\\
&=& \frac1{\bigl| G_{\{h_1, \ldots, h_j\}}\bigr|}
\,
{\sum_{g\in G_{\{h_1, \ldots, h_j\}}} \mu\Bigl(
\bigllrrAG{R\bigl(g(h_1), \ldots, g(h_j)\bigr)}
\Bigr)}\\
&=& \mufin\bigl(\llrrAG{R(h_1, \ldots, h_j)} \bigr)
,
\end{eqnarray*}
where each $g \in G_{ \{ h_1, \ldots, h_j \} } $
again acts on each $g(h_p)$ and $h_p$ (for $1\le p\le j$) as described above.
The first and third equalities are as before.
Note that  $f$ is the identity on  $\widetilde{\vfin}$ and so
$G_{\{f(h_1), \ldots, f(h_j)\}} = G_{\{h_1, \ldots, h_j\}}$;
the second equality
follows from this and our assumption that
$\mu$ is $\Ginf$-invariant.
Therefore $\mufin$ is $\Ginf$-invariant, hence $G$-invariant.
\end{proof}

\begin{proposition}
\label{backward}
Let $\M\in \MLG$, and suppose
that $\cM$ has almost-trivial definable closure.
Then $\Orb_{L_G}(\M)$ has a $G$-invariant probability measure.
\end{proposition}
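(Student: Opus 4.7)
The plan is to apply the main inverse-limit construction of Section~\ref{invlimitconstruction}, taking the elements of $\vfin^{\AG}$ as the ``constants'' $C_0$, and then to promote the resulting $\Ginf$-invariant measure to a $G$-invariant one by invoking Lemma~\ref{bijectionLemma}. First I would expand $L$ to $L^+$ by adjoining a constant symbol $c_a$ for each $a \in \vfin^{\AG}$, and let $\M^+$ be the expansion of $\M$ that interprets $c_a$ as $a$. Then I would fix a Scott sentence $\sigma \in \Lwow(L^+)$ of $\M^+$, take a countable admissible set $A \supseteq L^+ \cup \{\sigma\}$, and Morleyize using Lemma~\ref{morleyization} to obtain a pithy $\Pi_2$ theory $T^* \defas \fT_A \cup \{(\forall x) R_\sigma(x)\}$ in the language $L^+_A$. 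Setting $L_i \defas L^+_A$ and $T_i \defas T^*$ for all $i$ (so the sequences are constant), and letting $Q_i$ consist of every non-constant quantifier-free $L^+_A$-type consistent with $T^*$, I would aim to check conditions (W), (D), (E), and (C) and then apply Proposition~\ref{asModel}.

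Conditions (E) and (C) are immediate because the language sequence is constant and $Q_i$ already contains every consistent quantifier-free type. For (W), any pithy $\Pi_2$ axiom $(\forall \x)(\exists y)\psi \in T^*$ and any $q \in Q_i$ are jointly consistent, so realizing $q(\aa)$ in some model of $T^*$ and picking a witness $b$ for $\psi(\aa,y)$ gives the required extension as the quantifier-free type of $(\aa,b)$. The main obstacle will be verifying (D): given a non-redundant $q(\x, y) \in Q_i$ in which $y$ is not forced to instantiate a constant, I need to duplicate the variable $y$. To do this, I would realize $q(\aa, b)$ in $\M^+$, noting that $b \notin \aa \cup \vfin^{\AG}$. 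Here almost-trivial definable closure enters crucially: since $b \notin \dcl_{\M}(\aa \cup \vfin^{\AG})$, there is an automorphism of $\M$ fixing $\aa \cup \vfin^{\AG}$ pointwise and moving $b$ to some $b' \neq b$. This automorphism also fixes every $c_a$, so by the Morleyization property of $\fT_A$ (under which every $\Lwow(L^+) \cap A$-formula becomes atomic), the tuples $(\aa, b)$ and $(\aa, b')$ realize the same quantifier-free $L^+_A$-type over $\aa$. The quantifier-free type of $(\aa, b, b')$ then serves as the desired $q^\natural$.

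With all four conditions verified, the construction of Section~\ref{invlimitconstruction} combined with Proposition~\ref{asModel} yields an $\sym^{C_0}$-invariant probability measure $m$ on $\Models_{\cC_0, L^+_A}$ concentrated on models of $T^*$, where I take $C_0$ to be a countable set disjoint from $\Nats$ together with a bijection $\phi\colon C_0 \to \vfin^{\AG}$ determining $\cC_0$. Because $T^*$ encodes the Scott sentence $\sigma$, the measure $m$ is concentrated on $L^+_A$-structures whose $L$-reducts are isomorphic to $\M$ via a map sending $C_0$ bijectively (through $\phi$) to $\vfin^{\AG}$. Extending $\phi$ to any bijection $\Nats \cup C_0 \to \Nats$ that sends the $\Nats$-part onto $\Nats \setminus \vfin^{\AG}$ and pushing $m$ forward along it produces a probability measure $\mu$ supported on $\Orb_{L_G}(\M) \subseteq \MLG$.

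Finally, I would observe that under this identification, the action of $\Ginf$ on $\Orb_{L_G}(\M)$ fixes $\vfin^{\AG}$ pointwise and therefore corresponds to a subgroup of $\sym^{C_0}$; since $m$ is $\sym^{C_0}$-invariant, $\mu$ is $\Ginf$-invariant. Applying Lemma~\ref{bijectionLemma} to $\mu$ then yields a $G$-invariant probability measure on $\Orb_{L_G}(\M)$, completing the proof. The bookkeeping around the bijection between $\Nats \cup C_0$ and $\Nats$, and the observation that $\Ginf$-invariance passes through unchanged, are routine; the essential content is in Step~3, where the almost-trivial definable closure hypothesis is exactly what is needed to satisfy (D).
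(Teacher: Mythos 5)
Your proposal follows essentially the same strategy as the paper's proof: Morleyize relative to a Scott sentence of (an expansion of) $\M$, verify conditions (W), (D), (E), (C) for the inverse-limit construction, apply Proposition~\ref{asModel} to obtain an $\sym^{C_0}$-invariant measure concentrated on copies of the target structure, transfer it to a $\Ginf$-invariant measure on $\Orb_{L_G}(\M)$, and finish with Lemma~\ref{bijectionLemma}. Your verification of (D) --- pick a realization of $q(\aa,b)$ in $\M^{+}$, use almost-trivial definable closure to produce an automorphism fixing $\aa\cup\vfin^{\AG}$ pointwise and moving $b$ to some $b'$, and take the quantifier-free type of $(\aa,b,b')$ --- is the direct form of the paper's argument, which establishes (D) by showing that its failure would contradict almost-trivial definable closure; the content is the same. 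You are somewhat more explicit than the paper about the role of $\vfin^{\AG}$, adjoining constant symbols $c_a$ so that $C_0$ and $\sym^{C_0}$ have a transparent interpretation; the paper instead introduces $\M^\tau$ via a bijection $\widetilde{\vinf}\to\Nats$ and keeps the bookkeeping of the finite part more implicit.

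Two things to attend to. First, you omit the degenerate case $\vinf^{\AG}=\emptyset$, in which there are no non-constant elements and your bijection $\Nats\cup C_0\to\Nats$ sending the $\Nats$-part onto $\Nats\setminus\vfin^{\AG}$ does not exist; the paper handles this separately and trivially, since $\Ginf$ is then the trivial group so any probability measure on the orbit is $\Ginf$-invariant, and Lemma~\ref{bijectionLemma} applies directly. Second, the claim that the pushforward $\mu$ is supported on $\Orb_{L_G}(\M)\subseteq\MLG$ requires more care once $\ell_\infty>1$. Proposition~\ref{asModel} samples i.i.d.\ from $(X_\infty,m_\infty)$, so each index $n$ in the $\Nats$-part of the sample lands in $U_i^\infty$ with probability $m_\infty(\{x : U_i^\infty(x)\})$, \emph{independently} of which class $\AG$ assigns it; the $L_G$-reduct of the pushed-forward structure is therefore random, and equals $\AG$ with probability $0$ when at least two of the classes $U_i^\infty$ are present. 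One needs to sample class-respectingly --- drawing $x_n$ from the normalized restriction of $m_\infty$ to the $U_i^\infty$-class dictated by $\AG$ --- rather than unconditionally i.i.d., to land in $\MLG$; once that adjustment is made the rest of your argument goes through. (The paper's treatment of the corresponding pullback via $\tau$ is similarly compressed, so this is a point where both accounts could usefully be expanded.)
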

\begin{proof}
There are two cases. Suppose $\widetilde{\vinf}$ is empty. In this case, $\Ginf$ is
the trivial group, and so every measure on $\Orb_{L_G}(\M)$ is $\Ginf$-invariant.

Otherwise, $\widetilde{\vinf}$ is non-empty.
Hence $U_1^\infty \in L_G$, and
so $\widetilde{U_1^\infty}$ is a countably infinite set.
Therefore $\widetilde{\vinf}$ is countably infinite,
and so there is a
bijection $\tau \colon \widetilde{\vinf} \to \Nats$. Let $\M^\tau \in
\Models_{\cC_0, L}$ be such that for any quantifier-free $L$-type $q$,
\[
\M^\tau \models q(h_1, \ldots, h_j)
\quad \text{iff} \quad
\M \models q\bigl(\tau^{-1}(h_1), \ldots, \tau^{-1}(h_j)\bigr),
\]
where $j$ is the number of free variables of $q$.

Fix some countable admissible set $A$ containing
the Scott sentence $\sigma$ of $\M^\tau$ (equivalently, of $\M$).
Let the $L_A$-theory $\fT_A$
be the definitional expansion
(as in Lemma~\ref{morleyization}) of 
$A$.
Let $T_A \defas \fT_A \cup \{\sigma_A\}$, where $\sigma_A \in L_A$ is a pithy $\Pi_2$ sentence such that
\[
\fT_A \models \sigma_A \leftrightarrow \sigma
.
\]
For each $i\in\Nats$ define the language $L_i \defas  L_A$ and theory 
$T_i \defas T_A$, and
 let $Q_i$ be any enumeration of all quantifier-free $L_A$-types
over $A$ (of which there are only countably many).

Let $\M^\tau_A$ be the unique expansion of $\M^\tau$ to a model of $\fT_A$.
We will now show that there is an $\sym^{C_0}$-invariant probability measure
on $\Models_{\cC_0, L_A}$ that is concentrated on the class of models of $T_A$.
We will do so by showing that $\<Q_i\>_{i\in\Nats}$ satisfies conditions (W), (D), (E) and
(C) of our main construction, and so Proposition~\ref{asModel} applies.
Now, (W), (E), and (C) follow immediately as each $Q_i$
enumerates
all quantifier-free types consistent with $T_i = T_A$.

Suppose we do not have condition (D), i.e., duplication of quantifier-free
types.  Then  there is some $i\in\Nats$, some non-redundant non-constant quantifier-free type $q\in
Q_i$, and some tuple $\a \in \M^\tau_A$ such that there is a unique $b \in
\vinf^{\M^\tau_A}$ (as $q$ is non-constant) for which
\[
\M^\tau_A \models
q(\a, b).
\]
In particular, if $g\in \Aut(\M^\tau_A)$ fixes $\a \cup \vinf^{\M^\tau_A}$
pointwise, then  $g(b) = b$, and so
$\M^\tau_A$ does not have
almost-trivial definable closure (since
$b$ is disjoint from $\a$ as $q$ is non-redundant).
This violates our assumption of
almost-trivial definable closure 
for
$\M$, as 
$\M$
is isomorphic to
$\M^\tau_A$.
Hence condition (D) holds, and so by Proposition~\ref{asModel}
there is an invariant measure $m_\infty^\circ$ on $\Models_{\cC_0, L_A}$ that is
concentrated on the class of models of $T_A$, i.e., the isomorphism class of $\M^\tau_A$.

Now let $\mu$ be the probability measure on $\MLG$ satisfying, for
any relation symbol $R\in L$,
\[
\mu\bigl(\llrrAG{R(h_1, \ldots, h_j)}\bigr)
=
\mu_\infty^\circ \bigl(
\llrrC{R(\tau(h_1), \ldots, \tau(h_j)}
\bigr)
,
\]
where $j$ is the arity of $R$. 
The measure
$\mu$  is concentrated on
$\Orb_{L_G}(\M)$, as $m_\infty^\circ$ is concentrated on the isomorphism
class of $\M^\tau_A$. Hence the restriction $\mu'$ of $\mu$ to
$\Orb_{L_G}(\M)$ is a probability measure.
Furthermore, $\mu$ is
$\Ginf$-invariant  because $m_\infty^\circ$ is $\sym^{C_0}$-invariant.

By Lemma~\ref{bijectionLemma} applied to $\M$ and $\mu'$, there is a
$G$-invariant probability measure  on $\Orb_{L_G}(\M)$.
\end{proof}

This concludes the reverse direction of Theorem~\ref{maintheorem-new}.

\section{Concluding remarks}

In this paper we have provided conditions under which
the class of models of 
a theory admits an
invariant 
measure that is
not concentrated on any single isomorphism class.
But much remains to be explored.
In particular, there are 
natural constructions of invariant measures
that do not arise by the
techniques that we have described, but which would be interesting to capture 
through
general constructions.

\subsection{Other invariant measures}

The best-known invariant measures concentrated on the Rado graph
are
the distributions of the
countably infinite \ER\ random graphs
$\GG(\Nats, p)$ for $0 < p <
1$, 
in which
edges are chosen independently 
using
weight $p$ coins. These are not 
produced by our
constructions. 
In particular, when considered as arising from dense graph
limits, 
these limits
all have positive
entropy (as defined in, e.g., \cite[\S D.2]{MR3043217}), while any
of our invariant measures concentrated on graphs 
corresponds 
to a dense graph
limit that has zero entropy; equivalently, our measures arise from graphons
that are $\{0,1\}$-valued a.e., or
``random-free'' (see \cite[\S10]{MR3043217}).

\subsubsection{Kaleidoscope theories}
A similar phenomenon occurs with the following natural construction of an
invariant measure concentrated on the class of models of the Kaleidoscope 
theory built from
certain
ages.
Consider an age $A$ in a language $L$, both satisfying the hypotheses of
Proposition~\ref{KaleidoscopeInvMeas},  and let $n\in\Nats$
be such that 
$A$ has at least two
non-equal elements of size $n$ on the same underlying set.

Since $A$ is a strong amalgamation class,
there is 
some
invariant measure $\mu$ concentrated on the (isomorphism class
of the) \Fr\ limit of $A$, as proved in \cite{AFP}.
We now describe
an invariant measure,
constructed using $\mu$, that is
concentrated on the 
class of models of the Kaleidoscope 
theory $T_\infty$ built from the age
$A$.

Namely, consider the distribution $\mu_\infty$ of the following random construction.
Let $\X$ be a random structure in $\Models_{L_\infty}$ such that for each $i\in\Nats$, 
$\X|_{L^i}$ is
an $L^i$-structure consisting 
of
an
independent 
sample
from $\mu$.   Observe that this procedure almost surely
produces a  model of 
$T_\infty$, and so $\mu_\infty$ is
an invariant measure concentrated on the class of models of 
$T_\infty$.

For any $n$-tuple $\a\in \Nats$ and any distinct $i, j\in\Nats$, the 
random quantifier-free
$L^i$-type of  $\a$ induced by sampling from $\mu_\infty$ is independent
from the random quantifier-free \linebreak $L^j$-type of $\a$.
Hence the set of structures realizing any given quantifier-free $L_\infty$-type in
$n$ variables has measure $0$,
and so $\mu_\infty$ assigns measure $0$ to any single isomorphism class.
Furthermore, for ages consisting of graphs, when $\mu$ is not random-free,
one can show that the resulting invariant measure is not captured via our
constructions above.

For example, consider the case of the Kaleidoscope random graphs, where $\mu$ is
the distribution of the \ER\ graph $\GG(\Nats, 1/2)$, in which
edges are determined
by independent flips of a fair coin.
Then $\mu_\infty$ is
an invariant measure determined by independently flipping a fair coin to determine the
presence of a $c$-colored edge for each pair of vertices, for each of countably
many colors $c$.
The measure $\mu_\infty$ is concentrated
on the class of Kaleidoscope random graphs and
assigns measure $0$ to 
each 
isomorphism class, but does not arise via our methods.

\subsubsection{Urysohn space}

Likewise, there is another  natural invariant measure on $\Models_{\LMS}$ concentrated on
the class of countable $\LMS$-structures $\N$ that are models of $T_U$ (i.e., such that 
the completion of $\W_\N$ is $\UU$),
but which assigns measure $0$ to each isomorphism class.

Namely, for any
countable dense set $D \subseteq \Rplus$, recall that $D\UU$ is
the metric space induced by the \Fr\ limit of  all finite metric spaces (considered as
$\LMS$-structures) whose set of non-zero distances 
is contained in $D$.
Note that 
for any such $D$, the \linebreak $\LMS$-structure $\M_{D\UU}$ has trivial
definable closure (unlike the $\LMS$-structure corresponding to a typical sample of
the invariant measure $m_U$ that we constructed in
Proposition~\ref{UrysohnInvMeas}).
Hence, 
as proved in \cite{AFP},
there is an invariant
measure $m_D$ on $\Models_{\LMS}$, concentrated on the isomorphism class
of $\M_{D\UU}$.

Now let $\widetilde{D}$ be a random subset of $\Rplus$
chosen via 
a countably infinite set of independent samples from any non-degenerate 
atomless
probability measure on
$\Rplus$. 
Then
with probability $1$, the set $\widetilde{D}$ is
infinite, dense, and for any given $r\in \Rplus$ does not contain $r$.
Finally, consider the random measure $m_{\widetilde{D}}$. Its distribution
is also an  invariant measure on $\Models_{\LMS}$ concentrated on
the class of countable $\LMS$-structures $\N$ such that
the completion of the corresponding metric space $\W_\N$
is isometric to $\UU$,
but which assigns measure $0$ to each isomorphism class.
However, this invariant measure is different from the measure $m_U$ that we
constructed
in Proposition~\ref{UrysohnInvMeas}, as a typical sample from it has
trivial definable closure, whereas a typical sample from $m_U$ does not.

We now discuss a more elaborate case of invariant measures that can
also be described explicitly but which do not arise from our
construction.
This set of examples, along with 
the
explicit Kaleidoscope and Urysohn
constructions described above,
motivate the search for further general conditions that lead to invariant
measures.

\subsubsection{Continuous transformations}
\label{cont-subsubsec}

The previous example involved no relationship between the various 
copies $L^j$ of the original language.  We now consider a more complex example,
in which interactions 
within
a sequence of languages allow us to describe
``transformations'' from one structure to another.
Although the invariant measure in this example will assign measure $0$ to every isomorphism class, it is not clear how it could arise from the methods of this paper.

Let $L$ be a countable relational language.
Consider the larger language $\Ltr$, which consists of the disjoint
union of countably
infinitely many copies $L^t$ of
$L$ indexed by $t\in \Rationals \cap [0,1]$.
For each relation
symbol $R\in L$,
write $R^t$ for the corresponding symbol indexed by $t\in\Rationals\cap
[0,1]$.
One can think of the $\Ltr$-structure as describing 
a ``time-evolution''
starting with 
a 
structure which occurs in the first sublanguage $L^0$, 
and ending at
another structure
which occurs in the last sublanguage $L^1$, 
progressing through structures in intermediate sublanguages.

\begin{definition}
Let $\M_0$ be an $L^0$-structure and $\M_1$ an $L^1$-structure.
We call  an $\Ltr$-structure $\M$ a \defn{transformation} of
$\M_0$ into $\M_1$
when
\[\M|_{L^0}  = \M_0 \quad \text{and} \quad  \M|_{L^1}  = \M_1,\]
and for all relation symbols $R\in L$,
where $n$ is the arity of $R$, and all $s, t \in \Rationals$ such that
$0 \le s < t \le 1$,
\[\M \models (\forall x_1, \ldots, x_n) \bigl (R^s(x_1, \ldots, x_n)
\rightarrow R^t(x_1, \ldots, x_n)
\bigr).
\]
\end{definition}

We now define a notion,
called a \emph{nesting},
that will ensure coherence between structures in languages with
intermediate indices, as ``time'' progresses.

\begin{definition}
\label{nestingdef}
Suppose $A_0$ is an age in the language $L^0$ and $A_1$ is an age in the language $L^1$.
We define
a \defn{nesting of $A_0$
in $A_1$} to be an age $A$ in the language $L^0 \cup L^1$
that satisfies the following properties:
\begin{itemize}
\item $A$ 
is a strong amalgamation class.
\item For every $\cK\in A$ and every relation $R$ in $L$,
\[\cK \models (\forall x_1, \ldots, x_n) \bigl (R^0(x_1, \ldots x_n) \rightarrow R^1(x_1, \ldots x_n)
\bigr),
\]
where $n$ is the arity of $R$.
\item If $\cN$ is a \Fr\ limit of $A$, then $\cN|_{L^0}$ is a \Fr\ limit of
$A_0$ and $\cN|_{L^1}$ is a \Fr\ limit of
$A_1$.
\end{itemize}
\end{definition}

For example, consider the age consisting of 
all those
ways that a finite graph can be
overlaid on a finite triangle-free graph (using a different edge relation)
such that whenever there is an edge in the latter there is a corresponding
edge in the former.  This is a nesting of the collection of finite
triangle-free graphs in the collection of finite graphs. The \Fr\ limit of
the joint age consists of a copy of the Rado graph overlaid on a copy of the
Henson triangle-free graph (using different edge relations) such that
whenever a pair of vertices has an edge in the latter, it has one in the
former.

Given
a nesting $A$ of $A_0$ in $A_1$ as in Definition~\ref{nestingdef},
we will
now describe a \emph{random} \linebreak $\Ltr$-structure $\M$ that is a.s.\ a transformation of $\M|_{L^0}$ into $\M|_{L^1}$, and for which
$\M|_{L^0 \cup L^1}$ is a \Fr\ limit of $A$, almost surely.
Furthermore, the distribution of $\M$ will be invariant under arbitrary
permutations of the underlying set.

Because $A$ has the strong amalgamation property,
 there is some probability measure $\mu$ on
$\Models_{L^0 \cup L^1}$,
invariant under
$\sym$,
that is concentrated on the isomorphism class of the \Fr\ limit of $A$.
Our procedure starts by first  sampling $\mu$ to obtain a random
structure $\cN \in \Models_{L^0 \cup L^1}$.

Conditioned on $\cN$, for every relation symbol $R \in L$ and
every $j_1, \ldots, j_n \in \Nats$, where $n$ is the arity of $R$,
choose $r_{R, j_1, \ldots, j_n}\in\Reals$
as follows.  
If
\[
\cN\models \neg R^0(j_1, \ldots, j_n) \And R^1(j_1, \ldots, j_n),
\]
then independently choose a real number $r_{R, j_1, \ldots, j_n}\in(0,1)$ uniformly at
random;  if
\[
\cN\models \neg R^0(j_1, \ldots, j_n) \And \neg R^1(j_1, \ldots, j_n),
\]
then
let $r_{R, j_1, \ldots, j_n} \defas 2$, so that $R^s(j_1, \ldots, j_n)$ will 
not hold for any $s$;
otherwise let
$r_{R, j_1, \ldots, j_n} \defas 0$.
Define $\M$ to be the $\Ltr$-structure such that
for all $s \in \Rationals \cap [0,1]$,
\[\M \models R^s(j_1, \ldots, j_n)
\]
if and only if $s \ge r_{R, j_1, \ldots, j_n}$,
for all $R \in L$ and
every $j_1, \ldots, j_n \in \Nats$, where $n$ is the arity of $R$.

The real $r_{R, j_1, \ldots, j_n}$ can be thought of as the
point in time at which $R(j_1, \ldots, j_n)$ 
``appears'', in that it flips
from not holding (in sublanguages
$L^s$ for $s < r_{R, j_1, \ldots, j_n}$) to holding (in sublanguages $L^s$
for $s \ge r_{R, j_1, \ldots, j_n}$).
Each $\M|_{L^s}$ 
then provides
a ``snapshot'' of
the structure over time as it
transitions from $\M|_{L^0}$ to $\M|_{L^1}$, 
whereby the relations hold 
of
more and more tuples.
In particular, for any tuple
and relation (of the same arity),
the set of ``times'' for which the relation holds of the tuple
is upwards-closed.

Note that
whenever there are 
such
points 
$r_{R, j_1, \ldots, j_n}$
other than $0$ and $2$, i.e., when
there is some tuple 
of
which a relation holds in $\M|_{L^1}$  but not
in $\M|_{L^0}$,
then
any two independent samples from the distribution of
$\M$ are a.s.\ non-isomorphic, as their respective sets of transition points
are a.s.\ distinct.
Hence, under this hypothesis, the distribution of $\M$ is an invariant
measure that assigns measure $0$ to every isomorphism class of
$\Ltr$-structures.

\subsection{Open questions}

In this paper, we have given conditions 
on a first-order
theory 
that ensure the existence of
an invariant measure concentrated on the class of its models
but on no single isomorphism class;
but a complete characterization has yet to be determined. It
would be interesting also
to characterize the
structure of these invariant measures.

Another 
question
is to find conditions under which
one can formulate
similar 
results for appropriate models of more sparse
structures. 
Various notions of sparse graphs and intermediate classes have recently
been studied extensively (see, e.g., \cite{MR2920058} and \cite{NO-unified});
for a presentation of graph limits for bounded-degree graphs, see \cite{MR3012035}. 

One may also ask whether one can obtain measures concentrated on the 
class of models of the 
theory of continuous transformations
described 
in \S\ref{cont-subsubsec}, and still not on any single isomorphism class,
in
a ``random-free'' way, i.e.,
by sampling from a
(two-valued) continuum-sized structure,
as in our main construction.

\section*{Acknowledgments}

This research was facilitated by participation in the workshop on
Graph and Hypergraph Limits at the American Institute of Mathematics 
(Palo Alto, CA),
the second workshop on Graph Limits, Homomorphisms and Structures at
Hrani\v{c}n\'i Z\'ame\v{c}ek 
(Czech Republic), the conference on Graphs and Analysis  at the
Institute for \linebreak Advanced Study 
(Princeton, NJ), 
the Arbeitsgemeinschaft on Limits of Structures at the Mathematisches
Forschungsinstitut Oberwolfach 
(Germany),
and the Workshop on Homogeneous Structures of the Hausdorff Trimester
Program on Universality and Homogeneity at the Hausdorff Research Institute
for Mathematics (Bonn, Germany).

Work on
this publication by CF was made possible through the support of
NSF grant DMS-0901020,
ARO grant W911NF-13-1-0212,
and grants from the
John Templeton Foundation and Google.
The opinions expressed in this
publication are those of the authors and do not necessarily reflect the
views of the John
Templeton Foundation or the U.S.\ Government.

Work by JN has been partially supported by the 
Project LL-1201 ERCCZ  CORES and by CE-ITI P202/12/G061 of the GA\v{C}R.

The authors would like to thank Daniel Roy and the referee for helpful comments, Alex Kruckman for detailed suggestions on an earlier draft,
and 
Robert Israel for the statement and proof of
Lemma~\ref{veryCombLemma}.


\begin{small}
\bibliographystyle{amsnomr}

\def\cprime{$'$} \def\polhk#1{\setbox0=\hbox{#1}{\ooalign{\hidewidth
  \lower1.5ex\hbox{`}\hidewidth\crcr\unhbox0}}}
  \def\polhk#1{\setbox0=\hbox{#1}{\ooalign{\hidewidth
  \lower1.5ex\hbox{`}\hidewidth\crcr\unhbox0}}} \def\cprime{$'$}
  \def\cprime{$'$} \def\cprime{$'$} \def\cprime{$'$} \def\cprime{$'$}
  \def\cprime{$'$} \def\cprime{$'$} \def\cprime{$'$} \def\cprime{$'$}
\providecommand{\bysame}{\leavevmode\hbox to3em{\hrulefill}\thinspace}
\providecommand{\MR}{\relax\ifhmode\unskip\space\fi MR }
\providecommand{\MRhref}[2]{%
  \href{http://www.ams.org/mathscinet-getitem?mr=#1}{#2}
}
\providecommand{\href}[2]{#2}

\end{small}

\end{document}